\documentclass[12pt]{article}

\usepackage{arxiv}

\usepackage[utf8]{inputenc} 
\usepackage[T1]{fontenc}    
\usepackage{hyperref}       
\usepackage{url}            
\usepackage{booktabs}       
\usepackage{amsfonts}       
\usepackage{nicefrac}       
\usepackage{microtype}      
\usepackage{lipsum}		    
\usepackage{graphicx}
\usepackage{natbib}
\usepackage{doi}

\usepackage{mathtools,amsmath,amsfonts,amssymb,amsthm,subcaption}
\usepackage{xcolor}
\usepackage{enumitem}
\usepackage[ruled]{algorithm2e}

\theoremstyle{plain}
\theoremstyle{plain}\newtheorem{theorem}{Theorem}[section]
\theoremstyle{plain}\newtheorem{lemma}{Lemma}[section]
\theoremstyle{plain}\newtheorem{corollary}{Corollary}[section]

\theoremstyle{definition}\newtheorem{definition}{Definition}[section]



\newcommand{\ol}[1]{\overline{#1}}

\def\th{\theta}
\def\vphi{\varphi}
\def\eps{\varepsilon}

\def\Th{\Theta}
\def\Gma{\Gamma}

\def\ind{\mathbb{I}}

\def\bA{\boldsymbol{A}}

\def\mA{\mathcal{A}}
\def\mB{\mathcal{B}}
\def\mD{\mathcal{D}}
\def\mF{\mathcal{F}}
\def\mG{\mathcal{G}}
\def\mT{\mathcal{T}}
\def\mP{\mathcal{P}}
\def\mQ{\mathcal{Q}}
\def\mX{\mathcal{X}}
\def\mY{\mathcal{Y}}

\def\dX{d_{\mX}}
\def\dY{d_{\mY}}

\def\dGma{d_{\Gma}}
\def\dTh{d_{\Th}}

\def\olvphi{\bar{\vphi}}
\def\olF{\bar{F}}
\def\olP{\bar{P}}

\def\mbP{\mathbb{P}}

\def\bx{\boldsymbol{x}}
\def\bxo{\bx_0}
\def\bxi{\bx_i}
\def\bxj{\bx_j}
\def\bxk{\bx_k}

\def\by{\boldsymbol{y}}
\def\byo{\by_0}
\def\byi{\by_i}
\def\byk{\by_k}
\def\byl{\by_{\ell}}

\def\bz{\boldsymbol{z}}

\def\bth{\boldsymbol{\theta}}
\def\btho{\bth_0}
\def\bthk{\bth_k}

\def\R{\mathbb{R}}
\def\Rp{\R_+}
\def\N{\mathbb{N}}

\newcommand{\set}[1]{\{#1\}}
\newcommand{\Lset}[1]{\left\{#1\right\}}

\newcommand{\bset}[1]{[#1]}

\def\piw{\pi^{\omega}}
\def\pii{\pi_i}
\def\piix{\pi_{i,\bx}}
\def\piiw{\pii^{\omega}}
\def\piixw{\piix^{\omega}}
\def\piixow{\piw_{i,x_0}}

\def\bthx{\bth_{\bx}}
\def\bthw{\bth^{\omega}}
\def\bthi{\bth_i}
\def\bthix{\bth_{i,\bx}}
\def\bthiw{\bthi^{\omega}}
\def\bthixw{\bthix^{\omega}}
\def\bthixow{\bthw_{i,x_0}}

\def\bgma{\boldsymbol{\gamma}}
\def\bgmaw{\bgma^{\omega}}
\def\bgmai{\bgma_i}
\def\bgmak{\bgma_k}
\def\bgmal{\bgma_{\ell}}
\def\bgmao{\bgma_0}

\def\Vi{V_i}
\def\Vix{V_{i,\bx}}
\def\Viw{\Vi^{\omega}}
\def\Vixw{\Vix^{\omega}}

\def\Vx{V_{\bx}}
\def\Vw{V^{\omega}}

\def\Vjx{V_{j,\bx}}

\def\Vjxw{\Vjx^{\omega}}

\def\GX{G_{\mX}}
\def\Gx{G_{\bx}}
\def\Gxo{G_{\bxo}}
\def\Gw{G^{\omega}}
\def\Gxw{\Gx^{\omega}}
\def\Gxow{\Gxo^{\omega}}

\def\mPo{P^0}
\def\mPGam{\mP_{\Gma}}
\def\po{p^0}

\def\qGw{q^{\Gw}}
\def\qPo{q^{\mPo}}
\def\qn{q_n}
\def\qnwx{q_{n,\bx}^{\omega}}
\def\qw{q^{\omega}}

\def\mw{m^{\omega}}
\def\gw{g^{\omega}}

\def\Beta{\textsc{Beta}}

\def\alpx{\alpha_{\bx}}
\def\alpX{\alpha_{\mX}}

\def\parV{\Psi_V}
\def\parTh{\Psi_{\Th}}

\def\ev{\mathbb{E}}
\def\prob{\mathbb{P}}

\def\DDP{\textsc{DDP}}
\def\wDDP{w\textsc{DDP}}
\def\thDDP{\textsc{\(\th\)DDP}}

\def\NTh{N_{\Th}}
\def\Nf{N_f}

\def\Fw{F^{\omega}}

\def\TV{\mathrm{TV}}

\def\LInf{L^{\infty}}
\def\Cb{C_b}

\def\dHel{d_{H}}
\def\dLInf{d_{\LInf}}

\def\muY{\nu_{\mY}}
\def\muX{\nu_{\mX}}

\def\kmix{\psi}

\newcommand{\nrm}[1]{\|#1\|}
\newcommand{\nrmC}[1]{\nrm{#1}_C}
\newcommand{\nrmLInf}[1]{\nrm{#1}_{\LInf}}
\newcommand{\KL}[2]{\textsc{KL}(#1\|\, #2)}

\title{On Dependent Dirichlet Processes for General Polish Spaces}


\author{
        \href{https://orcid.org/0000-0000-0000-0000}{\includegraphics[scale=0.06]{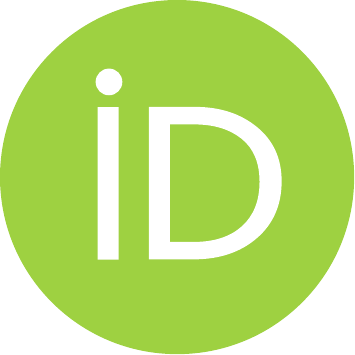}\hspace{1mm}Andr\'es~Iturriaga}\thanks{Andr\'es Iturriaga is Assistant Professor, Departamento de Matem\'atica y Ciencia de la Computaci\'on, Facultad de Ciencia, Universidad de Santiago de Chile, Santiago, Chile.} \\
         Departamento de Matem\'atica y \\Ciencia de la Computaci\'on \\
         Universidad de Santiago de Chile \\
         Santiago, Chile \\
         \texttt{andres.iturriaga@usach.cl} \\
        \And
        \href{https://orcid.org/0000-0002-2533-2509}{\includegraphics[scale=0.06]{orcid.pdf}\hspace{1mm}Carlos~A.~Sing~Long}\thanks{Carlos A. Sing-Long is Assistant Professor at the Institute for Mathematical and Computational Engineering and the Institute for Biological and Medical Engineering, Pontificia Universidad Cat\'olica de Chile, and the Center for the Discovery of Structures in Complex Data and the Center for Cardiovascular Magnetic Resonance.} \\
        Institute for Mathematical and \\ Computational Engineering and Institute for \\ Biological and Medical Engineering \\
        Pontificia Universidad Cat\'olica de Chile \\
        Santiago, Chile \\
        \texttt{casinglo@uc.cl} \\
        \And
        \href{https://orcid.org/0000-0000-0000-0000}{\includegraphics[scale=0.06]{orcid.pdf}\hspace{1mm}Alejandro~Jara}\thanks{Alejandro Jara is Associate Professor, Department of Statistics, Pontificia Universidad Cat\'olica de Chile, and Director, Center for the Discovery of Structures in Complex Data, Casilla 306, Correo 22, Santiago, Chile.} \\
        Department of Statistics\\
        Pontificia Universidad Cat\'olica de Chile\\
        Santiago, Chile \\
        \texttt{atjara@uc.cl} \\
}



\hypersetup{
    pdftitle={
                On Dependent Dirichlet Processes for General Polish Spaces
            },
    pdfsubject={
                stat.TH
            },
    pdfauthor={
                Andr\'es~Iturriaga, 
                Carlos~A.~Sing~Long, 
                Alejandro~Jara
            },
    pdfkeywords={
                    Related random probability distributions,
                    Bayesian nonparametrics,
                    Support of random measures,
                    Non standard spaces
                },
}

\begin{document}
\maketitle

\begin{abstract}
    We study Dirichlet process–based models for sets of predictor–dependent probability distributions, where the domain and predictor space are general Polish spaces. We generalize the definition of dependent Dirichlet processes, originally constructed on Euclidean spaces, to more general Polish spaces. We provide sufficient conditions under which dependent Dirichlet processes have appealing properties regarding continuity (weak and strong), association structure, and support (under different topologies). We also provide sufficient conditions under which mixture models induced by dependent Dirichlet processes have appealing properties regarding strong continuity, association structure, support, and weak consistency under i.i.d. sampling of both responses and predictors. The results can be easily extended to more general dependent stick-breaking processes.
\end{abstract}

\keywords{
            Related random probability distributions,
            Bayesian nonparametrics,
            Support of random measures,
            Non standard spaces
        }


\newpage
\section{Introduction}

This paper focuses on the properties of Bayesian nonparametric (BNP) prior distributions for sets of predictor--dependent probability measures, \(\mF=\{F_{\bx}: \bx \in  \mX\}\), where the \(F_{\bx}\)'s are probability measures defined on a common measurable Polish space \((\mY,\mB(\mY))\), indexed by a vector of exogenous predictors \(\bx \in \mX\), with \(\mX\) being also a Polish space, where \(\mB(\mY)\) is the Borel \(\sigma\)-field of \(\mY\). To date, most of the BNP priors to account for the dependence of predictors  on set of probability measures \(\mF\) are generalizations of the Dirichlet process (DP)~\citep{ferguson;73,ferguson;74} and Dirichlet process mixture (DPM) models~\citep{lo;84}. Let \(\mD(\mY)\) be the space of all probability measures, with density w.r.t. Lebesgue measure, defined on \((\mY,\mB(\mY))\). A DPM model is a stochastic process, \(F\), defined on an appropriated probability space \((\Omega,\mF,\mbP)\), such that for almost every \(\omega \in \Omega\), the density function of \(F\) is given by 
\begin{eqnarray}\label{DPM}
    f(\by \mid G(\omega)) =\int_{\Theta} \psi(\by,\bth) G(\omega)(d 
    \bth), \ \ \by \in \mY, 
\end{eqnarray}
where \(\psi(\cdot,\bth)\) is a continuous density function on \((\mY,\mB(\mY))\), for every \(\bth \in \Theta\), and \(G\) is a DP, whose sample paths are probability measures defined on \((\Theta,\mB(\Theta))\), with \(\mB(\Theta)\) being the co\-rres\-ponding Borel \(\sigma\)-field. If \(G\) is DP with
parameters \((M,G_0)\), where \(M\in \R_0^+\) and \(G_0\) is a probability measure on \((\Theta,\mB(\Theta))\), written as \(G\mid M, G_0
\sim \mbox{DP}(M G_0)\), then the trajectories of the process can be a.s. represented by the following stick-breaking representation \citep{sethuraman;94}: 
\(G(B)=\sum_{i=1}^\infty \pi_i \delta_{\bth_i}(B)\), \(B \in \mB(\Theta)\), where \(\delta_{\bth}(\cdot)\) is the Dirac measure at \(\bth\),  \(\pi_i=V_i \prod_{j<i}(1-V_j)\), with \(V_i\mid M \stackrel{iid}{\sim}\mbox{Beta}(1,M)\), and \(\bth_i \mid G_0 \stackrel{iid}{\sim} G_0\). Discussion of properties and applications of DP can be found, for instance, in \cite{mueller;quintana;jara;hanson;2015}.

Most of the BNP extensions incorporate dependence on predictors via the mixing distribution in~\eqref{DPM},  by replacing  \(G\) with \(G_{\bx}\), and the prior specification problem is related to the modeling of the collection of predictor-dependent mixing probability measures \(\set{G_{\bx} : \bx \in \mX}\)~\citep{quintana;etal;2022}. Some of the earliest developments on predictor-dependent DP models appeared in~\cite{cifarelli;regazzini;78}, who defined dependence across related random measures  by introducing a regression for the baseline measure  of marginally DP random measures. A more flexible construction was proposed by~\cite{maceachern;99}, called the dependent Dirichlet process (DDP). The key idea behind the DDP is to create a set of marginally DP random measures and to introduce dependence by modifying the stick-breaking representation of each element in the set. Specifically, \cite{maceachern;99} generalized the stick-breaking representation by assuming \(G_{\bx}(B)=\sum_{i=1}^\infty \pi_i(\bx) \delta_{\bth_i(\bx)}(B)\), \(B \in \mB(\Theta)\), where the point masses \(\bth_i(\bx)\), \(i\in\N\), are independent stochastic processes with index set \(\mX\), and the weights take the form \(\pi_i(\bx)=V_i(\bx) \prod_{j<i}[1-V_j(\bx)]\), with  \(V_i(\bx)\),  \(i\in\N\), being independent stochastic processes with index set \(\mX\) and \(\mbox{Beta}(1,M)\) marginal distribution. We refer the reader to~\cite{barrientos;jara;quintana;2012} for a formal definition of the DDP. Other extensions of the DP for dealing with related probability distributions include the DPM mixture of normals model for the joint distribution of the response and predictors~\citep{mueller;erkanli;west;1996}, the hierarchical mixture of DPM \citep{mueller;quintana;rosner;2004}, the predictor-dependent weighted mixture of  DP~\citep{dunson;pillai;park;2007}, the kernel-stick breaking process~\citep{dunson;park;2008}, the probit-stick breaking processes~\citep{chung;dunson;2009,rodriguez;dunson;2011}, the cluster-\(X\) model~\citep{mueller;quintana;2010}, the PPMx model \citep{mueller;quintana;rosner;2011}, and the general class of stick-breaking processes~\citep{barrientos;jara;quintana;2012}, among many others. Dependent neutral to the right processes and correlated  two-parameter Poisson-Dirichlet processes have been proposed by~\cite{epifani;lijoi;2010} and \cite{leisen;lijoi;2011}, respectively, by considering suitable L\'evy copulas. The general class of dependent normalized completely random measures has been discussed, for instance, by~\cite{lijoi;nipoti;pruenster;2014}. Based on a different formulation of the conditional density estimation problem,~\cite{tokdar;zhu;ghosh;2010} and~\cite{jara;hanson;2011} proposed alternatives to convolutions of dependent stick-breaking approaches.

All of the dependent BNP approaches described previously have focused on responses and parameters defined on Euclidean spaces, and are not appropriate for spaces in which the Euclidean geometry is not valid.  A relevant example of this situation arises in statistical shape analysis, where one of the main spaces of interest is Kendall's shape space~\citep{Kendall_1977}, which can be viewed as the quotient of a Riemannian manifold.  Kendall's space is a natural underlying  space for applications in different areas, including morphometry~\citep{claude2008morphometrics}, meteorology~\citep{mardia2000directional}, archeology~\citep{dryden2016statistical} and genetics~\citep{billera2001geometry}. In these contexts, to employ standard statistical procedures that do not take into account the geometrical properties of the underlying spaces can lead to wrong inferences, which  explains the increasing interest in the development of statistical models for more general Polish spaces.  

To date, the development of statistical procedures for non Euclidean spaces has focussed on the problem of mean estimation~\citep[see, e.g.,][]{bhattacharya2002nonparametric, bhattacharya2003large, bhattacharya2005large}, density estimation~\citep[see, e.g.,][]{pelletier2005kernel,bhattacharya2010nonparametric, bhattacharya2012strong}, and on the regre\-ssion problem for Euclidean responses based on non Euclidean predictors~\citep[see, e.g.,][]{pelletier2006non,bhattacharya2012nonparametric}.~\cite{bhattacharya2002nonparametric, bhattacharya2003large, bhattacharya2005large} studied the problem of nonparametric estimation of a location parameter on a Riemannian manifold, by means of the concept of Fr\'echet mean \citep{Frechet1948}, and derive its asymptotic distribution.~\cite{pelletier2005kernel} study the density estimation problem on a compact Riemannian manifold, by adapting kernel-type techniques.~\cite{bhattacharya2010nonparametric, bhattacharya2012strong} considered the problem of density estimation for data supported on a complete metric space from a BNP point of view.~\cite{pelletier2006non} consider the nonparametric regression problem for a real-valued response and with predictors supported on a closed Riemannian manifold.~\cite{bhattacharya2012nonparametric} study the problem of prediction of a categorial response based on predictors supported on a general manifold.

This work has two parts. In the first, we generalize the definition of a DDP, originally proposed on Euclidean spaces, to more general Polish spaces and establish its basic properties. It is important to stress that the existing DDP definitions given by~\cite{maceachern;99,maceachern;2000} and~\cite{barrientos;jara;quintana;2012} cannot be directly extended to more general spaces because they make use of the concept of cumulative distribution function. In our definition, the existence of a DDP in a general Polish space is justified by the extension of Kolmogorov's consistency theorem proposed by~\cite{neveu1965mathematical}. In Section~\ref{sec:ddp:definition} we define the DDP for general Polish spaces, and introduce some parsimonious variants that share similar properties. In Section~\ref{sec:ddp:continuity} we provide sufficient conditions under which the sample paths of a DDP are continuous under the weak, strong and uniform topologies on the space of probability measures. In Section~\ref{sec:ddp:support} we study the support of a DDP under different topologies, aiming to provide sufficient conditions under which a DDP has full support, or at least contains a sufficiently large set of functions of interest. Finally, in Section~\ref{sec:ddp:association} we provide conditions under which a measure of association is continuous. 

In the second part, we focus on mixtures, providing sufficient conditions under which they have a continuous density with respect to a base measure, and have appealing properties regarding continuity, support, asso\-ciation structure, and consistency under i.i.d. sampling. Section~\ref{sec:mixtures} introduces a framework to study mixtures of a DDP and its variants, with a focus on the regularity of the probability density of the mixture. In Section~\ref{sec:mixtures:continuity} we provide sufficient conditions for a mixture to have continuous sample paths. In particular, the strong regularizing properties of a mixture allows us to show that under mild conditions the sample paths are almost surely uniformly continuous. In Section~\ref{sec:mixtures:support} we leverage the fact that the mixtures we study have a density to study the support in the topologies induced by the Hellinger and \(L^{\infty}\) distances, and by the Kullback-Leibler divergence. In Section~\ref{sec:mixtures:association} we briefly discuss the regularity of some measures of association. Finally, in Section~\ref{sec:mixtures:posteriorConsistency} we discuss the conditions under which the mixture satisfies posterior consistency. We conclude the article with some brief remarks.

\section{Preliminaries}
\label{sec:preliminaries}

In this work, we let \((\Omega,\mF,\mbP)\) be a complete probability space. If \(S\) is a  measurable space and \(X:\Omega\to S\) is a \(S\)-valued random variable, we usually write \(X^{\omega}\) or \(X(\omega)\) to denote its value at \(\omega\). If \(\set{X_t:\, t\in T}\) is a process of \(S\)-valued random variables on \(T\) we write \(X_t^{\omega} := X_t(\omega) := X(t,\omega)\) to denote its values at \((t,\omega)\). In particular, outcomes always appear as superscripts and as the last argument of a function. 

If \(\mX,\mY\) are two sets we denote a generic element with the same letter in lowercase boldface. If \(F:\mX\times \mY \to S\) then we write \(F_{\bx}\) for a fixed \(\bx\in X\) to denote the function \(F_{\bx} : \mY\to S\) defined as \(F_{\bx}(\by) = F(\bx,\by)\). Similarly, we denote \(F_{\by}\) for fixed \(\by\in \mY\) the function defined as \(F_{\by}(\bx) = F(\bx,\by)\).

We recall that a Polish space is a separable and completely metrizable topological space. The sets \(\mX, \mY\) and \(\Theta\) are always assumed to be Polish spaces, with complete metrics \(\dX, \dY\) and \(\dTh\) respectively. 

The set \(\Th\) represents the set of parameters onto which we define a prior. Hence, we endow \(\Th\) with the Borel \(\sigma\)-algebra \(\mB(\Th)\) and let \(\mP(\Th)\) be the space of probability measures on \((\Th,\mB(\Th))\). Since \(\Th\) is Polish, the elements of \(\mP(\Th)\) are regular~\cite{Cohn2013}, i.e., for any \(P\in \mP(\Th)\) and \(B\in \mB(\Th)\) we have
\begin{align*}
    P(B) &= \inf\set{P(U):\, B\subset U,\, \mbox{\(U\) open}}\\
    P(B) &= \inf\set{P(K):\, B\supset K,\, \mbox{\(K\) compact}}.
\end{align*}
In particular, finite collections of elements in \(\mP(\Th)\) are always tight. We let \(\Cb(\Th)\) be the space of real-valued, bounded continuous functions on \(\Th\) endowed with the norm \(\nrmC{f} := \sup\set{|f(\bth)|:\, \bth\in\Th}\).

If \(n\) is a positive integer, we let \(\bset{n} := \set{1, \ldots, n}\). Finally, we use the abbreviations ``a.s.'' for {\em almost surely}, ``a.e.'' for {\em almost everywhere} and ``i.i.d.'' for {\em independent and identically distributed}.

\section{Dependent Dirichlet processes on Polish spaces}
\label{sec:ddp:definition}

Dependent Dirichlet processes (DDP) are a class of \(\mP(\Th)\)-valued stochastic processes on \(\mX\) defined on \((\Omega,\mF,\mbP)\). If \(\set{\Gx:\, \bx\in\mX}\) is a DDP then
\[
\Gxw = \sum_{i=1}^{\infty} \piixw \delta_{\bthixw}
\]
a.s. for a sequence \(\set{\set{\piix:\, \bx\in \mX}}_{i\in \N}\) of processes such that \(\piix \geq 0\) for every \(i\in\N\) and \(\bx\in \mX\) and \(\sum_{i\in \N} \piix \equiv 1\) a.s., and a sequence \(\set{\set{\bthix:\, \bx\in \mX}}_{i\in\N}\) of i.i.d. \(\Th\)-valued processes. The distinctive feature of the DDP is that the processes \(\set{\set{\piix:\, \bx\in \mX}}_{i\in \N}\) are defined in terms of a stick-breaking process. Let \(\set{\set{\Vix:\, \bx\in \mX}}_{i\in\N}\) be a sequence of i.i.d. processes with \(V_{i,\bx} \sim \Beta(1,\alpx)\)  for some \(\alpx\in \Rp\) and any \(\bx\in \mX\). Then, the stick-breaking process asociated to \(\set{\set{\Vix:\, \bx\in \mX}}_{i\in\N}\) is
\begin{equation}
    \label{eq:stickBreaking}
    \piixw := \begin{cases}
        \Vixw & i = 1\\
        \Vixw \prod_{j=1}^{i-1}(1 - \Vjxw) & i > 1.
    \end{cases}
\end{equation}
We now generalize the definition in~\cite{barrientos;jara;quintana;2012} to Polish spaces.

\begin{definition}
    \label{def:ddp}
    Let \(\GX\) be a \(\mP(\Th)\)-valued stochastic process on \(\mX\) and defined on \((\Omega,\mF,\mbP)\) given by \(\set{\Gx: \bx\in\mX}\). Suppose the following conditions hold: 
    \begin{enumerate}[leftmargin=24pt]
        \item{There exists a sequence \(\set{\set{\Vix:\, \bx\in\mX}}_{i\in\N}\) of separable i.i.d. processes, with a law characterized by a finite-dimensional parameter \(\parV\), and with marginal distribution \(\Beta(1,\alpx)\) for some \(\alpx \geq 0\) for any \(\bx\in \mX\). 
        }
        \item{There exists a sequence \(\set{\set{\bthix:\, \bx\in\mX}}_{i\in\N}\) of i.i.d. processes, with a law characterized by a finite-dimensional parameter \(\parTh\), and with marginal distribution \(\Gx^0\in \mP(\Th)\) for any \(\bx\in \mX\).
        }
        \item{There exists a null set \(N\subset \Omega\) such that for every \(\bx\in \mX\), \(B\in \mB(\Th)\) and \(\omega\in \Omega\setminus N\)
            \begin{equation}
                \label{eq:ddpRepresentation}
                \Gxw(B) = \sum_{i=1}^{\infty} \piixw \delta_{\bthixw}(B)
            \end{equation}
            where the sequence \(\set{\set{\piix:\, \bx\in\mX}}_{i\in\N}\) is given by the stick-breaking process in~\eqref{eq:stickBreaking}.
        }
    \end{enumerate}
    Then, the process \(\GX\) is called a {\em dependent Dirichlet process (DDP)} with parameters \((\parV,\parTh)\) and denoted as \(\GX \sim \DDP(\parV,\parTh)\). If \(\GX\) is a DDP we write \(\alpX:=\set{\alpx:\, \bx\in\mX}\) and \(\GX^{0} := \set{\Gx^{0}:\, \bx\in\mX}\).
\end{definition}

It is of interest to determine when a DDP can be constructed from a prescribed \(\alpX\) and \(\GX^0\). First, we can construct a sequence of processes \(\set{\set{\bthix:\, \bx\in\mX}}_{i\in\N}\) satisfying Condition 2 in Definition~\ref{def:ddp} from a prescribed \(\GX^0\) by constructing first a process \(\set{\bthx:\, \bx\in X}\) with marginals \(\set{\Gx^0:\, \bx\in X}\) using an extension of Kolmogorov's consistency theorem to Polish spaces ~\citep[see Section~III.3 in][]{neveu1965mathematical}. Remark that in this case the separability of the resulting process is not required.

Second, we can construct a sequence of processes \(\set{\set{\Vix:\, \bx\in\mX}}_{i\in\N}\) satisfying Condition 1 in Definition~\ref{def:ddp} from a prescribed \(\alpX\) using Kolmogorov's consistency theorem and a consistent families of copula functions as in~\citep{barrientos;jara;quintana;2012}. However, Definition~\ref{def:ddp} requires this process to be in addition separable. This ensures that the set of outcomes for which the left-hand side in~\eqref{eq:ddpRepresentation} is not a probability measure is a measurable set. In fact, if for every \(\bx\in\mX\) we define the event
\[
N_{\bx} := \Lset{\omega\in \Omega:\, \sum_{i=1}^{\infty}\piixw < 1} = \Lset{\omega\in \Omega:\, \sum_{i=1}^{\infty}\log(\ev(1 - \Vixw)) = -\infty},
\]
then the left-hand side in~\eqref{eq:ddpRepresentation} fails to be a probability measure if
\[
\omega\in \bigcup_{\bx\in\mX} N_{\bx}.
\]
However, the above set may not be measurable. The separability of the processes ensures there exists a countable set \(\mX_V \subset \mX\) such that
\[
\bigcup_{\bx\in\mX} N_{\bx} = \bigcup_{\bx\in\mX_V} N_{\bx}.
\]
Therefore, 
\[
\mbP\left(\bigcup_{\bx\in \mX} N_{\bx}\right) \leq \sum_{\bx\in \mX_V} \mbP(N_{\bx}).
\]
whence it suffices to ensure \(\mbP(N_{\bx}) = 0\) for every \(\bx\in \mX_V\). Since any stochastic process on a separable space with a.s. continuous sample paths is separable, an alternative, standard way to build a separable process on \(\mX\) with \(\Beta(1,\alpx)\) marginal distributions is to transform a real-valued process over \(\mX\) with a.s. continuous sample paths using the quantile function of the beta distribution~\citep{maceachern;99,maceachern;2000}. Specifically, let \(\set{Z_{\bx}:\bx\in\mX}\) be a real-valued stochastic process with a.s. continuous sample paths, and with continuous cumulative distribution function \(F_{Z,\bx}\) at \(\bx\in \mX\). Let \(F_{B,\bx}\) denote the cumulative distribution function of a \(\Beta(1,\alpx)\) distribution. Then, the process \(\set{\bx:\, \bx\in \mX}\) defined as
\[
\Vw_{\bx} := F_{B,\bx}^{-1}(F_{Z,\bx}(Z_{\bx}))
\]
is separable, has a.s. continuous sample paths, and has marginal distribution \(\Beta(1,\alpx)\). The choice for the base process \(\set{Z_{\bx}:\,\bx\in\mX}\) depends on the structure of \(\mX\). When it is a Gaussian process, there are known conditions under which it admits a modification with a.s. continuous sample paths. For example, Theorem~2.3.1 in~\cite{khoshnevisan2002multiparameter} and~\cite{Preston1972} provide sufficient conditions for the existence of such a modification when \(\mX\) is compact. Another possibility when \(\mX\) is a manifold is to use diffusion processes~\citep{hsu2002stochastic}.

Although the DDP is flexible, it is of interest to define parsimonious variants of the DDP for which either the support points or the weights are independent of \(\bx\). These parsimonious versions should be understood not only as simplifications of the DDP, but also as useful models with comparative advantages over the DDP that make them suitable in specific settings. The first parsimonious version removes the dependence of the weights on \(\bx\). 

\begin{definition}
    \label{def:wddp}
    Let \(\GX\) be a \(\mP(\Th)\)-valued stochastic process on \(\mX\) and defined on \((\Omega,\mF,\mbP)\) given by \(\set{\Gx: \bx\in\mX}\). Suppose the following conditions hold:
    \begin{enumerate}[leftmargin=24pt]
        \item{There exists a sequence \(\set{\Vi}_{i\in\N}\) of i.i.d. processes, with a common law \(\Beta(1,\alpha)\) for some \(\alpha \geq 0\). 
        }
        \item{There exists a sequence \(\set{\set{\bthix:\, \bx\in\mX}}_{i\in\N}\) of i.i.d. processes, with a law characterized by a finite-dimensional parameter \(\parTh\), and marginal distribution \(\Gx^0\in \mP(\Th)\) for any \(\bx\in \mX\).
        }
        \item{There exists a null set \(N\subset \Omega\) such that for every \(\bx\in \mX\), \(B\in \mB(\Th)\) and \(\omega\in \Omega\setminus N\)
            \[
            \Gxw(B) = \sum_{i=1}^{\infty} \piiw \delta_{\bthixw}(B)
            \]
            where the sequence \(\set{\pii}_{i\in \N}\) of random variables is defined as
            \begin{equation}
                \piiw := \begin{cases}
                    \Viw & i = 1\\
                    \Viw \prod_{j=1}^{i-1}(1 - \Vw_{j}) & i > 1.
                \end{cases}
            \end{equation}
        }
    \end{enumerate}
    Then, the process \(\GX\) is called a {\em single-weights dependent Dirichlet process (\(w\)DDP)} with parameters \((\alpha,\parTh)\) and denoted as \(\GX \sim \wDDP(\alpha,\parTh)\). If \(\GX\) is a \(w\)DDP we write \(G_{\mX}^{0} := \set{\Gx^{0}:\, \bx\in\mX}\).
\end{definition}

One of the advantages of the single-weights DDP is that it avoids any difficulty that may arise in the construction of a separable process \(\set{\Vix:\, \bx\in\mX}\) with marginals \(\Beta(1,\alpx)\) for \(\bx\in\mX\). However, it implicitly assumes we may still be able to construct the process \(\set{\bthx:\, \bx\in\mX}\) given the marginals \(\set{\Gx^0:\, \bx\in\mX}\). Hence, this variant is desirable when the structure of \(\mX\) is complex relative to that of \(\Th\).

The second parsimonious variant of the DDP relaxes the dependence of the support points on \(\bx\).

\begin{definition}
    \label{def:thddp}
    Let \(\GX\) be a \(\mP(\Th)\)-valued stochastic process on \(\mX\) and defined on \((\Omega,\mF,\mbP)\) given by \(\set{\Gx: \bx\in\mX}\). Suppose the following conditions hold:
    
    \begin{enumerate}[leftmargin=24pt]
        \item{There exists a sequence \(\set{\set{\Vix:\, \bx\in\mX}}_{i\in\N}\) of separable i.i.d. processes, with a law characterized by a finite-dimensional parameter \(\parV\), and with marginal distribution \(\Beta(1,\alpx)\) for some \(\alpx \geq 0\) for any \(\bx\in \mX\). 
        }
        \item{There exists a sequence \(\set{\bthi}_{i\in\N}\) of i.i.d. \(\Theta\)-valued random variables, with common law \(G^0\in\mP(\Th)\).
        }
        \item{There exists a null set \(N\subset \Omega\) such that for every \(\bx\in \mX\), \(B\in \mB(\Th)\) and \(\omega\in \Omega\setminus N\)
            \[
            \Gxw(B) = \sum_{i=1}^{\infty} \piixw \delta_{\bthiw}(B)
            \]
            where the sequence \(\set{\set{\bthix:\, \bx\in\mX}}_{i\in\N}\) is given by~\eqref{eq:stickBreaking}.
        }
    \end{enumerate}
    Then, the process \(\GX\) is called a {\em single-atoms dependent Dirichlet process (\(\theta\)DDP)} with parameters \((\parV,G^0)\) and denoted as \(\GX \sim \thDDP(\parV,G^0)\). If \(\GX\) is a \(\theta\)DDP we write \(\alpX := \set{\alpx:\, \bx\in\mX}\).
\end{definition}

A single-atoms DDP can be easier to construct in situations where the structure of \(\Th\) is complex. As matter of fact, the construction of a stochastic process \(\set{\bthx:\, \bx\in \mX}\) can be difficult for general spaces \(\mX\) and \(\Th\), particularly when it needs to satisfy additional properties, such as a.s. continuity of its sample paths. Some specific constructions are available in particular cases of interest. For example, if \(\Th\) is Kendall's planar shape space \citep[see, e.g.][]{Kendall1984}, diffusion processes have been proposed under two different approaches: (i) directly on the landmarks, on the space of configurations, which is referred to as Euclidean diffusion of shape \citep[see, e.g.,][]{Kendall_1977, kendall1988symbolic, kendall1990diffusion, le1991stochastic}, and (ii) directly on $\Theta$, via infinitesimal generators \citep[see, e.g.,][]{le1994brownian, kendall1998diffusion, ball2008brownian, golalizadeh2010useful} and the solution of partial differential equations \citep[see, e.g.,][]{hsu2002stochastic}.

Although neither the DDP nor its variants require the continuity of the sample paths of the processes \(\set{\set{\Vix:\, \bx\in\mX}}_{i\in\N}\)  or \(\set{\set{\bthix:\, \bx\in\mX}}_{i\in\N}\). However, this additional condition endows the DDP and its variants of some desirable properties that are the focus of this work.

\begin{definition}
    \label{def:continuousDDP}
    Let \(\GX\) be a DDP or any of its variants. 
    
    \begin{enumerate}[leftmargin=24pt]
        \item{If \(\GX\) is a DDP we say it is a {\em continuous parameter DDP} if both the separable i.i.d. processes \(\set{\set{\Vix:\,\bx\in\mX}}_{i\in\N}\) and the i.i.d. processes \(\set{\set{\bthix:\,\bx\in\mX}}_{i\in\N}\) have a.s. continuous sample paths.
        }
        \item{If \(\GX\) is a \(w\)DDP we say it is a {\em continuous parameter \(w\)DDP} if the i.i.d. processes \(\set{\set{\bthix:\,\bx\in\mX}}_{i\in\N}\) have a.s. continuous sample paths.
        }
        \item{If \(\GX\) is a \(\th\)DDP we say it is a {\em continuous parameter \(\th\)DDP} if the separable i.i.d. processes \(\set{\set{\Vix:\,\bx\in\mX}}_{i\in\N}\) have a.s. continuous sample paths.
        }
    \end{enumerate}
\end{definition}

\section{Properties of dependent Dirichlet processes}

\subsection{Continuity}
\label{sec:ddp:continuity}

The continuity of the sample paths of a process is an important property that also plays a critical role in statistical applications. On one hand, it allows us to determine suitable topologies for the spaces containing the sample paths. On the other, it ensures that the process will be able to borrow strength across sparse data sources regarding the predictors. In fact, continuity eliminates the need for replicates of the responses at every value of the predictors to obtain adequate estimates of the predictor-dependent probability distributions~\citep[see, e.g.,][]{barrientos;jara;quintana;2017, wehrhahn;barrientos;jara;2022}.

For the DDP and its variants, the sample paths are functions from \(\mX\) into \(\mP(\Th)\) and their continuity depends on the topologies on these spaces. Although we always assume \(\mX\) is endowed with its metric topology, there are several standard choices for the topology on \(\mP(\Th)\). Although we mostly focus on the weak topology, we will also study the effect of considering the strong (or weak-*) and uniform (or norm, or total variation) topologies on \(\mP(\Th)\).  

For the weak topology on \(\mP(\Th)\) we denote \(C_W(\mX,\mP(\Th))\) the space of {\em weakly continuous functions} from \(\mX\) into \(\mP(\Th)\). These are the functions \(P:\mX\to \mP(\Th)\) such that for any \(f\in \Cb(\Th)\) the function
\[
F(\bx) = \int_\Theta f(\bth) dP(\bth)
\]
is continuous on \(\mX\). The following theorem shows that when the underlying stochastic processes have a.s. continuous sample paths, the DDP and its variants have a.s. weakly continuous sample paths. We defer its proof to Appendix~\ref{thm:continuity:weak:proof}.

\begin{theorem}\label{thm:continuity:weak}
    Let \(\GX\) be a \(\mP(\Th)\)-valued process. Suppose that \(\GX\) is a continuous parameter DDP, a continuous parameter \(w\)DDP or a continuous parameter \(\th\)DDP. Then for a.e. \(\omega\in\Omega\)
    \begin{equation*}
        \forall\, \bx_0\in \mX,\, f\in C_b(\Th):\,\, \lim_{\bx\to \bx_0}\, \int f(\bth)d\Gxw(\bth) = \int f(\bth)\, d\Gw_{\bx_{0}}(\bth).
    \end{equation*}
\end{theorem}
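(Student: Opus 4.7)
The plan is to fix an outcome $\omega$ outside a suitable null set, to write the desired integral as the series $\int f \, d\Gxw = \sum_{i=1}^{\infty} \piixw f(\bthixw)$, and then show that this series depends continuously on $\bx$ by a standard ``finite head plus small tail'' argument. First, I would assemble the relevant null set. Condition~3 in Definition~\ref{def:ddp} (and its analogues for the two variants) furnishes a null set $N_0$ off which $\Gxw$ is a probability measure for every $\bx\in\mX$. The continuity hypothesis from Definition~\ref{def:continuousDDP} provides, for each $i\in\N$, null sets $N_i^{V}$ and $N_i^{\Th}$ off which $\bx\mapsto \Vixw$ and $\bx\mapsto \bthixw$ are continuous (with the appropriate trivializations for $\wDDP$ and $\thDDP$). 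Setting $N := N_0\cup\bigcup_{i\in\N}(N_i^V\cup N_i^\Th)$, which is still null as a countable union, fixes a good sample space.

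Next, I would record two elementary continuity facts at any $\omega\notin N$: (i) since $\piixw = \Vixw\prod_{j<i}(1-\Vjxw)$ is a finite product of continuous functions of $\bx$, each $\bx\mapsto \piixw$ is continuous; (ii) since $f\in \Cb(\Th)$ is continuous and $\bx\mapsto\bthixw$ is continuous, the composition $\bx\mapsto f(\bthixw)$ is continuous and bounded by $\nrmC{f}$. Consequently each partial sum $S_N^{\omega}(\bx):=\sum_{i=1}^{N}\piixw f(\bthixw)$ is continuous in $\bx$.

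The core of the proof is then the tail estimate at a fixed $\bx_0\in\mX$. Given $\eps>0$, use that $\sum_{i=1}^{\infty}\pi_{i,\bx_0}^{\omega}=1$ to choose $\bar N$ with $\sum_{i=1}^{\bar N}\pi_{i,\bx_0}^{\omega}>1-\eps/(3\nrmC{f}+1)$. By (i), the continuous function $\bx\mapsto \sum_{i=1}^{\bar N}\piixw$ is close to its value at $\bx_0$ on a neighborhood $U$ of $\bx_0$, so for every $\bx\in U$ we obtain $\sum_{i>\bar N}\piixw<2\eps/(3\nrmC{f}+1)$. The triangle inequality then yields
\begin{equation*}
\Bigl|\int f\,d\Gxw-\int f\,d\Gw_{\bx_0}\Bigr|
\leq \sum_{i=1}^{\bar N}\bigl|\piixw f(\bthixw)-\pi_{i,\bx_0}^{\omega}f(\bth_{i,\bx_0}^{\omega})\bigr|
+\nrmC{f}\sum_{i>\bar N}\piixw
+\nrmC{f}\sum_{i>\bar N}\pi_{i,\bx_0}^{\omega},
\end{equation*}
and the first sum, being a finite sum of continuous functions of $\bx$ vanishing at $\bx=\bx_0$, can be made smaller than $\eps/3$ by further shrinking $U$. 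Letting $\bx\to\bx_0$ gives the conclusion. The same argument covers $\wDDP$ (the weights are constant in $\bx$, so (i) is immediate) and $\thDDP$ (the atoms are constant in $\bx$, so (ii) is immediate).

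The main subtlety is not the analytic step, which is routine once the series representation is exploited, but the measurability bookkeeping: the almost-sure continuity of the atoms and stick-breaking variables is a statement for each $i\in\N$ separately, and one must verify that the exceptional sets can be aggregated into a single null set on which the weak continuity holds simultaneously for all $\bx_0$ and all $f\in\Cb(\Th)$. That the aggregated set $N$ does not depend on $\bx_0$ or $f$ is precisely what lets us pull the ``for almost every $\omega$'' in front of the universal quantifiers in the theorem's conclusion.
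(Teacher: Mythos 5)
Your proposal is correct and follows essentially the same route as the paper's proof: fix $\omega$ in a full-measure set where the weights sum to one and the relevant sample paths are continuous, truncate the series at a finite head chosen so the tail at $\bx_0$ is small, use continuity of the finite partial sum of weights to propagate the tail bound to a neighborhood of $\bx_0$, and bound the head difference by continuity. The only cosmetic difference is that you bound the head term as a single continuous function vanishing at $\bx_0$, whereas the paper splits it into separate weight and atom increments; the aggregation of the countably many exceptional null sets that you emphasize is implicit in the paper's choice of $\Omega_0$.
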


Consequently, to construct a DDP or any of its variants with a.s. weakly continuous sample paths it suffices to construct suitable continuous processes on \(\mX\). As discussed earlier, a process \(\set{\Vx:\, \bx\in \mX}\) with the desired properties can be constructed from a real-valued base process \(\set{Z_{\bx}:\, \bx\in \mX}\) on \(\mX\) with a.s. continuous sample paths which can be, for instance, a suitable Gaussian process. 

To our knowledge, there is no similar, standard way to construct a process \(\set{\bthx:\, \bx\in \Theta}\) with the desired properties for general \(\mX\) and \(\Th\). When \(\mX = \R^d\) there are well-known sufficient conditions that ensure the there exists a modification of a \(\Th\)-valued process \(\set{\bthx:\, \bx\in \mX}\) with a.s. continuous sample paths~\cite[Theorem~2.23]{kallenberg1997}: this modification exists if there exists exponents \(\alpha,\gamma > 0\) and a constant \(C > 0\) such that
\begin{equation}
    \forall\, \bx,\bx'\in \mX:\quad \ev(\dTh(\bthx,\bth_{\bx'})^{\alpha})\leq C_{\theta}\dX(\bx,\bx')^{d + \gamma}
    \label{eq:3.5}
\end{equation}
where \(\dTh\) is a {\em complete} metric on \(\Theta\). This result can be applied to a Polish space \(\mX\) that is homeomorphic to \(\R^d\) for some \(d\). 

When \(\Th\) is a Riemannian manifold, processes with a.s. sample paths can be defined through diffusion processes  \citep[see, e.g.,][]{hsu2002stochastic}. When \(\Th\) is also a quotient space \(\Theta=\mA/\mG\) for a locally compact space \(\mA\) and a group \(\mG\), then a \(\Th\)-valued process with a.s. continuous sample paths can be constructed as follows. Let \(\mQ:\mA\rightarrow\Th\) be the canonical quotient map and let \(\bA:\mX\times\Omega\rightarrow \mA\) be a process with a.s. continuous sample paths. Since the canonical quotient map is continuous, the process \(\set{\mQ(\bA(\bx)):\, \bx\in \mX}\) has a.s. continuous sample paths.

As mentioned earlier, the variants of the DDP should not be thought only as simplifications of the DDP but also as processes with distinct properties. Endowing \(\mP(\Th)\) with the strong topology already allows us to distinguish the properties of these processes. Let \(C_S(\mX,\mP(\Th))\) be the vector space of {\em strongly continuous functions} from \(\mX\) into \(\mP(\Th)\). These are the functions \(P:\mX\to \mP(\Th)\) such that for any \(f\in \LInf(\Th)\) the function
\[
F(\bx) = \int_\Theta f(\bth) dP(\bth)
\]
is continuous on \(\mX\). The following theorem shows that, under the same hypothesis of Theorem~\ref{thm:continuity:weak}, the \(\th\)DDP has a.s. strongly continuous sample paths. Although this is really a corollary of Theorem~\ref{thm:continuity:uniform}, we present this statement independently for clarity. 

\begin{theorem}\label{thm:continuity:strong}
    Let \(\GX\) be a \(\mP(\Th)\)-valued process. Suppose \(\GX\) is a continuous parameter \(\th\)DDP with \(\GX\sim \thDDP(\parV,G^0)\). Then for a.e. \(\omega\in\Omega\)
    \begin{equation*}
        \forall\, \bx_0\in \mX,\, f\in \LInf(\Th):\,\, \lim_{\bx\to \bx_0}\, \int f(\bth)d\Gxw(\bth) = \int f(\bth)\, d\Gw_{\bx_{0}}(\bth).
    \end{equation*}
\end{theorem}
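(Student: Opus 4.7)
The plan is to exploit the fact that in a continuous parameter \(\th\)DDP the atoms \(\bthi\) are shared across all \(\bx\), so the only \(\bx\)-dependence of \(\Gxw\) enters through the weights \(\piixw\). Fixing \(f\in \LInf(\Th)\) and \(\bx_0\in\mX\), this yields the identity
\begin{equation*}
    \int f(\bth)\, d\Gxw(\bth) - \int f(\bth)\, d\Gw_{\bx_0}(\bth) = \sum_{i=1}^{\infty}\bigl(\piixw - \pi^{\omega}_{i,\bx_0}\bigr)\, f(\bthiw),
\end{equation*}
so the left-hand side is bounded in absolute value by \(\nrmLInf{f}\sum_{i=1}^{\infty}\bigl|\piixw - \pi^{\omega}_{i,\bx_0}\bigr|\). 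It therefore suffices to show that \(\sum_{i\in\N}|\piixw - \pi^{\omega}_{i,\bx_0}| \to 0\) as \(\bx \to \bx_0\) for every \(\bx_0\in\mX\) and a.e.\ \(\omega\in\Omega\).

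The key analytical tool is a Scheff\'e-type argument for non-negative sequences with equal, finite totals: if \(a_i^{(n)}, a_i \geq 0\) satisfy \(a_i^{(n)} \to a_i\) for each \(i\) and \(\sum_i a_i^{(n)} = \sum_i a_i = 1\), then \(\sum_i |a_i^{(n)} - a_i| \to 0\) (write \(|a_i^{(n)} - a_i| = (a_i^{(n)} - a_i) + 2(a_i - a_i^{(n)})^+\) and apply dominated convergence on the second term, with \(a_i\) as a dominating summable majorant). Two hypotheses must be verified. First, because \(\GX\) is a continuous parameter \(\th\)DDP, each process \(\set{\Vix:\,\bx\in\mX}\) has a.s.\ continuous sample paths; since \(\piix\) is a finite product of continuous functions of \(V_{j,\bx}\) for \(j\leq i\), the map \(\bx\mapsto \piixw\) is a.s.\ continuous on \(\mX\), which delivers pointwise convergence \(\piixw \to \pi^{\omega}_{i,\bx_0}\). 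Second, Condition~3 of Definition~\ref{def:thddp} provides a null set outside of which \(\Gxw\) is a probability measure for every \(\bx\in\mX\), and taking \(B = \Th\) gives \(\sum_{i\in\N}\piixw = 1\) simultaneously for all such \(\bx\).

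The only bookkeeping required is in the null sets: letting \(N_i\) be the null set outside of which \(\bx\mapsto \piixw\) is continuous on \(\mX\) and \(N_0\) the null set from Definition~\ref{def:thddp}, the countable union \(N_0\cup \bigcup_{i\in\N} N_i\) remains null, and outside of it the Scheff\'e argument applies simultaneously for every \(\bx_0\in\mX\) and every \(f\in\LInf(\Th)\). Note that the resulting bound is uniform in \(f\) with \(\nrmLInf{f}\leq 1\), yielding in fact convergence in total variation; this is why the statement can be recovered as a corollary of Theorem~\ref{thm:continuity:uniform}.

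The main obstacle is largely administrative rather than conceptual: ensuring that the continuous-parameter hypothesis supplies a.s.\ continuity of \(\bx\mapsto \piixw\) on all of \(\mX\) (not just at each fixed \(\bx\)), and that the null sets collected across the countable index \(i\in\N\) remain null. Once these are in place, the collapse of the atomic dependence into a shared sequence \(\set{\bthi}_{i\in\N}\) reduces the problem to the \(\ell^1\)-convergence of probability weight sequences, which the Scheff\'e step resolves.
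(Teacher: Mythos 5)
Your proof is correct and follows essentially the same route as the paper's: both exploit that the atoms are shared across \(\bx\) and reduce the problem to showing \(\sum_{i}|\pi^{\omega}_{i,\bx}-\pi^{\omega}_{i,\bx_0}|\to 0\), which in fact yields total-variation continuity (the paper derives the theorem as a corollary of Theorem~\ref{thm:continuity:uniform}). The only cosmetic difference is that you close the \(\ell^1\) step with a Scheff\'e/dominated-convergence argument, whereas the paper carries out the equivalent truncation by hand, choosing \(N_\eps\) so that the tail \(\sum_{i>N_\eps}\pi^{\omega}_{i,\bx_0}\) is small and using continuity of the finitely many remaining weights.
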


A natural question is whether the DDP or the \(w\)DDP can have a.s. strongly continuous sample paths under similar assumptions. To our knowledge, this cannot be the case unless substantially stronger conditions are imposed on these processes. We defer the proof of the following theorem to Appendix~\ref{thm:continuity:notStrongForDDP:proof}.

\begin{theorem}\label{thm:continuity:notStrongForDDP}
    Let \(\GX\) be a \(\mP(\Th)\)-valued process. Suppose that \(\GX\) is a continuous parameter DDP or a continuous parameter \(w\)DDP. Let \(\bxo\in \Omega\). If for a.e. \(\omega\in \Omega\) we have
    \[
    \forall\, f\in \LInf(\Th):\,\, \lim_{\bx\to\bxo}\, \int f(\bth) d\Gxw(\bth) = \int f(\bth) d\Gxow(\bth)
    \]
    then for a.e. \(\omega\in\Omega\) there exists an open neighborhood \(U^\omega\subset \mX\) of \(\bxo\) and at least one \(i^\omega\in\N\) such that \(\bthw_{i^\omega}\) is constant on \(U^\omega\). 
\end{theorem}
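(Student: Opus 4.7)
The plan is to argue by contrapositive: working on the full-probability event where the strong-continuity hypothesis holds, the sample paths of all $V_{i,\cdot}^\omega$ and $\bth_{i,\cdot}^\omega$ are continuous, every weight $\pi_{i,\bxo}^\omega$ is strictly positive (which holds almost surely because $V_{i,\bxo}^\omega\sim\Beta(1,\alpha_{\bxo})$ takes values in $(0,1)$ a.s.), and the representation~\eqref{eq:ddpRepresentation} defines a probability measure, we will show that if $\bth_{1,\cdot}^\omega$ is not constant on any open neighborhood of $\bxo$ then strong continuity at $\bxo$ must fail, contradicting the hypothesis. Hence $\bth_1^\omega$ must be constant on some open neighborhood $U^\omega$, giving the conclusion with $i^\omega=1$.

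The test function we use is $f^\omega := \mathbb{1}_{\{\bth_{1,\bxo}^\omega\}}\in\LInf(\Th)$, which is Borel since singletons are closed in the Hausdorff space $\Th$. Writing $S^\omega := \{i\in\N:\,\bth_{i,\bxo}^\omega=\bth_{1,\bxo}^\omega\}\ni 1$, strong continuity applied to $f^\omega$ forces
\[
\sum_i \pi_{i,\bx}^\omega\,\mathbb{1}\{\bth_{i,\bx}^\omega = \bth_{1,\bxo}^\omega\}\;\longrightarrow\;\sum_{i\in S^\omega}\pi_{i,\bxo}^\omega\qquad\text{as }\bx\to\bxo.
\]
To contradict this, we will find a sequence on which the left-hand side is close to $\sum_{i\in S^\omega,\,i\neq 1}\pi_{i,\bxo}^\omega$, differing from the true limit by $\pi_{1,\bxo}^\omega>0$. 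Fix $\epsilon>0$ and pick $M$ with $\sum_{i>M}\pi_{i,\bxo}^\omega<\epsilon/2$. Continuity of each $V_{i,\cdot}^\omega$ (trivial in the $w$DDP case) yields a neighborhood $U_0$ of $\bxo$ on which $\sum_{i>M}\pi_{i,\bx}^\omega<\epsilon$; continuity of $\bth_{i,\cdot}^\omega$ for $i\in[M]\setminus S^\omega$, combined with $\bth_{i,\bxo}^\omega\neq\bth_{1,\bxo}^\omega$, produces neighborhoods $U_i$ on which $\bth_{i,\bx}^\omega\neq\bth_{1,\bxo}^\omega$. On the finite intersection $U:=U_0\cap\bigcap_{i\in[M]\setminus S^\omega}U_i$ we obtain
\[
G_\bx^\omega(\{\bth_{1,\bxo}^\omega\})\;\leq\;\sum_{i\in S^\omega\cap[M]}\pi_{i,\bx}^\omega\,\mathbb{1}\{\bth_{i,\bx}^\omega=\bth_{1,\bxo}^\omega\}+\epsilon.
\]

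The bad sequence is built from the observation that $E_1^\omega:=\{\bx:\,\bth_{1,\bx}^\omega=\bth_{1,\bxo}^\omega\}$ is closed (by continuity of $\bth_1^\omega$) and, under the non-local-constancy assumption, contains no neighborhood of $\bxo$; hence $\bxo$ is a limit point of the open set $\mX\setminus E_1^\omega$, and we can pick $\bx_k\in U\setminus E_1^\omega$ with $\bx_k\to\bxo$. Along this sequence the $i=1$ term vanishes and the remaining terms are dominated by $\sum_{i\in S^\omega\cap[M],\,i\neq 1}\pi_{i,\bx_k}^\omega\to\sum_{i\in S^\omega\cap[M],\,i\neq 1}\pi_{i,\bxo}^\omega$ by continuity of the weights. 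Combining with the strong-continuity limit produces $\pi_{1,\bxo}^\omega+\sum_{i\in S^\omega,\,i>M}\pi_{i,\bxo}^\omega\leq\epsilon$, and letting $\epsilon\downarrow 0$ contradicts $\pi_{1,\bxo}^\omega>0$. The main obstacle is that $\bx\mapsto\mathbb{1}\{\bth_{i,\bx}^\omega=\bth_{1,\bxo}^\omega\}$ is not continuous, so we cannot pass to the limit inside the sum term by term; the argument must carefully split the indices into the finite non-matching block $[M]\setminus S^\omega$ (where continuity plus separation at $\bxo$ force the indicator to be identically zero on a whole neighborhood), the finite matching block $S^\omega\cap[M]$ (where the indicator is delicate but the choice of $\bx_k$ kills the decisive $i=1$ term), and the tail $i>M$ (uniformly controlled by smallness of the total weight).
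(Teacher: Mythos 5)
Your proof is correct, and it takes a genuinely different route from the paper's. The paper applies the strong-continuity hypothesis to the indicator of the \emph{finite set} $B=\set{\bthw_{i,\bxo}:\, i\le N_\eps}$ of leading atoms at $\bxo$, deduces that for every $\bx$ in a neighborhood at least one atom $\bthw_{i,\bx}$ must lie in $B$, and then invokes continuity of a map into a finite set to obtain local constancy; the identification of the constant atom with $i=1$ is made only afterwards, via the i.i.d.\ structure of the atom processes. You instead test against the indicator of the \emph{singleton} $\set{\bthw_{1,\bxo}}$ and argue by contrapositive: if $\bx\mapsto\bthw_{1,\bx}$ is not constant on any neighborhood of $\bxo$, then $\bxo$ is a limit of points outside the closed set $\set{\bx:\,\bthw_{1,\bx}=\bthw_{1,\bxo}}$, and along such a sequence the $i=1$ contribution to $\Gw_{\bx}(\set{\bthw_{1,\bxo}})$ vanishes, yielding $\piw_{1,\bxo}\le\eps$ for every $\eps>0$ and contradicting $\piw_{1,\bxo}=\Vw_{1,\bxo}>0$, which holds off a null set. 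Your three-way split of the indices (the finite non-matching block annihilated on a whole neighborhood by continuity plus Hausdorff separation, the finite matching block dominated by the continuous weights, the tail controlled uniformly through $1-\sum_{i\le M}\piw_{i,\bx}$) is exactly what is needed to circumvent the discontinuity of $\bx\mapsto\ind\set{\bthw_{i,\bx}=\bthw_{1,\bxo}}$, and it works verbatim for the $w$DDP where the weights are constant in $\bx$. What your approach buys is a sharper conclusion — the constant atom can always be taken to be $i^\omega=1$, pointwise on the full-measure set rather than a posteriori by exchangeability — and it sidesteps the most delicate step of the paper's argument, where the index witnessing $\bthw_{i,\bx}\in B$ could a priori vary with $\bx$; the paper's version, in exchange, treats all indices symmetrically and matches the statement as written without privileging $i=1$.
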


Since for the DDP and \(w\)DDP the sequence of processes \(\set{\set{\bthix:\, \bx\in \mX}}_{i\in \N}\) is independent and identically distributed, the above implies that when the process has a.s. strongly continuous paths at \(\bx_0\) the process \(\set{\bth_{1,\bx}:\,\bx\in\mX}\) must have a.s. constant sample paths near \(\bx_0\). Although this suggests that the main issue is the behavior of the atoms themselves, the proof shows that the main issue is the independence between the processes \(\set{\set{\Vix:\, \bx\in \mX}}_{i\in \N}\) and \(\set{\set{\bthix:\, \bx\in \mX}}_{i\in \N}\). We conjecture that for the DDP and \(w\)DDP to have a.s. strongly continuous paths, it is necessary to introduce dependence between these processes. 

Since the DDP and \(w\)DDP do not have a.s. strongly continuous paths, it is clear they will not have a.s. continuous paths with respect to stronger topologies on \(\mP(\Th)\). However, for the \(\th\)DDP we can strengthen the topology on \(\mP(\Th)\) while preserving this property. Consider the uniform topology on \(\mP(\Th)\) and denote as \(C_U(\mX,\mP(\Th))\) the set of {\em uniformly continuous functions} from \(\mX\) into \(\mP(\Th)\). The total variation norm for any signed finite measure \(Q\) on \((\Theta,\mB(\Th))\) is defined as
\[
\nrm{Q}_{\TV} := \sup\Lset{\int f(\bth)dQ(\bth):\, f\in \LInf(\Th),\, \nrm{f}_{\LInf} \leq 1}.
\]
Then the elements of \(C_U(\mX,\mP(\Th))\) are the functions \(P:\mX\to\mP(\Th)\) such that for any \(\bx_0\in \mX\)
\[
\lim_{\bx\to\bx_0} \nrm{P_{\bx} - P_{\bx_0}}_{\TV} = 0.
\]
By choosing indicator functions, it is clear the above is equivalent to
\[
\forall\, \bx_0\in \mX:\quad \lim_{\bx\to\bx_0}\sup_{B\in\mB(\Th)}|\Gxw(B) - \Gxow(B)| = 0.
\]
which is an expression that is typically more interpretable in statistical applications. For the uniform topology we can show that, under the same assumptions of Theorem~\ref{thm:continuity:strong}, the \(\th\)DDP has a.s. uniformly (or norm) continuous sample paths. This is also known as continuity in total variation. Its proof is deferrered to Appendix~\ref{thm:continuity:uniform:proof}.

\begin{theorem}\label{thm:continuity:uniform}
    Let \(\GX\) be a continuous parameter \(\th\)DDP. Then, for a.e. \(\omega\in\Omega\),
    \[
    \forall\, \bx_0\in \mX:\,\, \lim_{\bx\to \bx_0}\, \nrm{\Gxw - \Gw_{\bx_{0}}}_{\TV} = 0.
    \]
\end{theorem}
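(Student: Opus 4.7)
The plan is to exploit the fact that for a $\th$DDP the atoms $\bthiw$ do not depend on $\bx$, so on the null complement $\Omega\setminus N$ furnished by Condition~3 of Definition~\ref{def:thddp}, both $\Gxw$ and $\Gxow$ are discrete probability measures supported on the \emph{common} set $\set{\bthiw:\, i\in\N}$. Consequently, for every $B\in \mB(\Th)$,
\[
|\Gxw(B) - \Gxow(B)| \;=\; \Bigl|\sum_{i=1}^{\infty}(\piixw-\piixow)\delta_{\bthiw}(B)\Bigr| \;\leq\; \sum_{i=1}^{\infty}|\piixw - \piixow|,
\]
and since the right-hand side is independent of $B$, we obtain $\nrm{\Gxw - \Gxow}_{\TV}\leq \sum_i |\piixw - \piixow|$. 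It thus suffices to show that this $\ell^1$-sum of weight differences vanishes as $\bx\to\bxo$.

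Since $\GX$ is a \emph{continuous parameter} $\th$DDP, the separable processes $\set{\Vix:\,\bx\in\mX}$ have a.s.\ continuous sample paths; enlarging $N$ by a null set if necessary, we may assume that for every $\omega\in\Omega\setminus N$, every $\bx\in\mX$ and every $i\in\N$ the map $\bx\mapsto \Vixw$ is continuous and $\sum_{i}\piixw = 1$. From~\eqref{eq:stickBreaking}, each $\piixw$ is a finite product of continuous functions of $\bx$, hence continuous. Fix $\omega\in\Omega\setminus N$ and any sequence $\bx_n\to\bxo$, and set $a_i^n := \pi^{\omega}_{i,\bx_n}$ and $a_i := \piixow$. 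Then $a_i^n,a_i\geq 0$, $\sum_i a_i^n = \sum_i a_i = 1$, and $a_i^n\to a_i$ pointwise in $i$.

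The conclusion follows from the discrete form of Scheffé's lemma. Using the identity $|a_i^n - a_i| = a_i^n + a_i - 2\min(a_i^n,a_i)$ and summing,
\[
\sum_{i=1}^{\infty}|a_i^n - a_i| \;=\; 2 - 2\sum_{i=1}^{\infty}\min(a_i^n,a_i);
\]
since $\min(a_i^n,a_i) \leq a_i$ with $\sum_i a_i = 1$ and $\min(a_i^n,a_i)\to a_i$ pointwise, dominated convergence for the counting measure gives $\sum_i\min(a_i^n,a_i)\to 1$, so the left-hand side tends to zero. Metrizability of $\mX$ then upgrades this sequential statement to $\nrm{\Gxw - \Gxow}_{\TV}\to 0$ as $\bx\to\bxo$. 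The main technical subtlety is producing a \emph{single} null set $N$ outside of which all the above properties hold simultaneously for every $\bx\in\mX$; this is already ensured by Condition~3 of Definition~\ref{def:thddp} together with the separability built into the continuous-parameter hypothesis. The genuinely new ingredient specific to this theorem is the observation that sharing atoms collapses the total variation distance into an $\ell^1$-distance between weights with conserved total mass, which is precisely the setting where Scheffé's lemma applies.
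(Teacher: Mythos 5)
Your proof is correct and follows essentially the same route as the paper: both reduce the total variation distance to the \(\ell^1\) distance between the weight sequences (using that the atoms of a \(\th\)DDP are common to all \(\bx\)), and then deduce \(\ell^1\) convergence from pointwise convergence of the continuous weights together with the conservation of total mass. The only cosmetic difference is that you package the final step as the discrete Scheff\'e lemma, whereas the paper carries out the equivalent tail-truncation argument (choosing \(N_\eps\) so that \(\sum_{i>N_\eps}\pi^{\omega}_{i,\bx_0}<\eps/4\) and controlling the head and tail separately) by hand.
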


\subsection{Support}
\label{sec:ddp:support}

The sample paths of the DDP and its variants are elements of suitable spaces of functions from \(\mX\) into \(\mP(\Th)\). It is of interest to characterize the size, in a suitable sense, of the set containing the sample paths. This leads us to the concept of support. In applications in statistics, a large support is an important and basic property that any BNP model should possess. In fact, it is a minimum requirement, and almost a ``necessary'' property, for a BNP model to be considered ``nonparametric.'' This property is also important because it typically is a necessary condition for the consistency of the posterior distribution. In such settings, the full support of the prior implies that the prior probability model is flexible enough to generate sample paths sufficiently close to any element of the parameter space. 

Given a topology \(\mT\) on \(\mP(\Th)^{\mX}\) the support of a process is the smallest closed set, in the sense of set inclusion, such that the probability it contains a sample path is equal to one. We say it has full support, or that the support is full, if it is equal to \(\mP(\Th)^{\mX}\). When the support is not full, its complement is a non-empty open set. In particular, it contains a point with a neighborhood that is disjoint from the support for which the probability of containing a sample path is zero. Consequently, to prove that a process has full support with respect to \(\mT\) it suffices to show that the probability that any element of a neighborhood basis contains a sample path is positive. 

We characterize the support of the DDP and its variant for common choices of \(\mT\) starting from the weakest. We consider first the (weak) product topology, or pointwise topology~\cite{Kelley1975}, on \(\mP(\Th)^{\mX}\). For reasons that shall be clear soon, we call it the {\em product-weak topology}. In this topology, a neighborhood basis at \(P^0\in \mP(\Th)^{\mX}\) is given by sets of the form
\[
\Lset{P\in \mP(\Th)^{\mX}:\,\, \left|\int_\Theta f_{i,j}(\bth) dP_{\bx_j}(\bth) - \int_\Theta f_{i,j}(\bth) dP^0_{\bx_j}(\bth)\right| < \eps_{i,j},\,\, i,j\in \bset{n}}
\]
for \(\eps_{1, 1},\ldots, \eps_{n,n} > 0\), \(\bx_1,\ldots, \bx_n\in \mX\) and \(f_{1,1},\ldots, f_{n,n}\in \Cb(\Th)\). The following theorem shows the DDP and its variants have full support with respect to this topology. We defer its proof to Appendix~\ref{thm:support:product-weak:proof}.

\begin{theorem}\label{thm:support:product-weak}
    Let \(\GX\) be a \(\mP(\Th)\)-valued process on \(\mX\). Suppose that one of the following assertions holds.
    \begin{enumerate}[leftmargin=24pt]
        \item{\(\GX \sim \DDP(\parV,\parTh)\), for any \(\bx_1,\ldots, \bx_n\in \mX\) the law of the random vector
            \[
            (\bth_{1,\bx_1}, \ldots, \bth_{1,\bx_n})
            \]
            has full support on \(\Th^n\), and the law of the random vector 
            \[
            (V_{1,\bx_1}, \ldots, V_{1,\bx_n})
            \]
            has full support on \([0,1]^n\).
        }
        \item{\(\GX\sim \wDDP(\alpha,\parTh)\), for any \(\bx_1,\ldots, \bx_n\in \mX\) the law of the random vector
            \[
            (\bth_{1,\bx_1}, \ldots, \bth_{1,\bx_n})
            \]
            has full support on \(\Th\) and the law of the random variable \(V_1\) has full support on \([0,1]\).
        }
        \item{\(\GX\sim \thDDP(\parV,G^0)\), \(G^0\) has full support on \(\Th\), and or any \(\bx_1,\ldots, \bx_n\in \mX\) the law of the random vector 
            \[
            (V_{1,\bx_1}, \ldots, V_{1,\bx_n})
            \]
            has full support on \([0,1]^n\).
        }
    \end{enumerate}
    Then
    \[
    \mbP\left(\Lset{\omega\in\Omega:\,\, \left|\int_\Theta f_{i,j}(\bth) d\Gw_{\bxj}(\bth) - \int_\Theta f_{i,j}(\bth) dP^0_{\bxj}(\bth)\right| < \eps_{i,j},\,\, i,j\in \bset{n}}\right) > 0
    \]
    for \(\eps_{1, 1},\ldots, \eps_{n,n} > 0\), \(\bx_1,\ldots, \bx_n\in \mX\) and \(f_{1,1},\ldots, f_{n,n}\in \Cb(\Th)\). In consequence, the process has full support on \(\mP(\Th)^{\mX}\) endowed with the
    product-weak topology. 
\end{theorem}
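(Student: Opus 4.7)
The plan is to approximate each $P^{0}_{\bxj}$ by a finitely supported measure that a truncated stick-breaking series can match, and then use the i.i.d.\ structure across the stick-breaking index together with the stated full-support hypotheses to give the required event positive probability. I would treat the DDP case first and adapt the argument to the two parsimonious variants.

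Since $\Th$ is Polish, finitely supported probability measures are weakly dense in $\mP(\Th)$. I would fix a common truncation level $N$ and, for each $j\in\bset{n}$, produce a discrete measure $Q_{j}=\sum_{k=1}^{N}\alpha_{j,k}\delta_{\zeta_{j,k}}$ with $\alpha_{j,k}>0$, $\sum_{k}\alpha_{j,k}=1$, and $\bigl|\int f_{i,j}\,dQ_{j}-\int f_{i,j}\,dP^{0}_{\bxj}\bigr|<\eps_{i,j}/2$ for every $i,j\in\bset{n}$. For the DDP, atoms and weights may depend freely on $j$; for the $w$DDP, where weights are common across $\bxj$, I would fix $\alpha_{k}=1/N$ and invoke the almost-sure weak convergence of empirical measures of i.i.d.\ samples from each $P^{0}_{\bxj}$ to pick $j$-dependent atoms $\zeta_{j,k}$; for the $\th$DDP, where atoms are common, I would choose a single sequence $\zeta_{1},\dots,\zeta_{N}$ (supportable because $G^{0}$ has full support) and let $\alpha_{j,k}$ depend on $j$. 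I would then translate the discrete target into an event on the underlying stick-breaking variables. Setting $\beta_{j,i}:=\alpha_{j,i}/\bigl(1-\sum_{k<i}\alpha_{j,k}\bigr)$ for $i<N$ and $\beta_{j,N}:=1$, the stick-breaking weights generated by $(V_{i,\bxj})_{i=1}^{N}=(\beta_{j,i})_{i=1}^{N}$ equal exactly $(\alpha_{j,i})_{i=1}^{N}$. For sufficiently small $\eta>0$, let $E$ be the event that for every $i\in\bset{N}$ the vector $(V_{i,\bx_{1}},\dots,V_{i,\bx_{n}})$ lies in an $\eta$-ball around $(\beta_{1,i},\dots,\beta_{n,i})$ and $(\bth_{i,\bx_{1}},\dots,\bth_{i,\bx_{n}})$ in a suitable neighborhood of $(\zeta_{1,i},\dots,\zeta_{n,i})$. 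The full-support hypotheses, combined with i.i.d.\ independence across $i$ and independence between the $V$- and $\bth$-processes, give $\mbP(E)>0$; the variants invoke the corresponding subset of hypotheses.

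Finally, on $E$ I would decompose $\Gw_{\bxj}=\sum_{i=1}^{N}\pi^{\omega}_{i,\bxj}\delta_{\bthw_{i,\bxj}}+R^{\omega}_{\bxj}$, where the tail has mass $1-\sum_{i=1}^{N}\pi^{\omega}_{i,\bxj}=\prod_{i=1}^{N}(1-V^{\omega}_{i,\bxj})$. Choosing $\eta$ small enough makes the first $N$ terms approximate $Q_{j}$ to within $\eps_{i,j}/4$ (using continuity of $f_{i,j}$ at the atom targets and boundedness of $f_{i,j}$ for the weight perturbations), while the tail contribution is bounded by $2\nrmC{f_{i,j}}\bigl(1-\sum_{i=1}^{N}\pi^{\omega}_{i,\bxj}\bigr)$, which is made at most $\eps_{i,j}/4$ by forcing $V_{N,\bxj}$ close to $1$ via $\beta_{j,N}=1$. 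Combining with the initial approximation bound yields the target inequality on $E$ and hence the claimed positive probability. The main obstacle I anticipate is producing a single approximation scheme that accommodates all three variants: the common-weights constraint of the $w$DDP is particularly restrictive and forces the uniform-weights workaround whose accuracy must be controlled by the weak convergence of empirical measures; moreover the boundary value $V_{N,\bxj}=1$ must be handled by exploiting that full support on $[0,1]^{n}$ grants positive probability to arbitrarily small neighborhoods of $(1,\dots,1)$.
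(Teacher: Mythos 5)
Your proposal is correct and follows the same overall strategy as the paper's proof: reduce to a finite discrete approximation of each \(P^0_{\bx_j}\), realize that approximation on an event involving only the first \(N\) sticks and atoms, and obtain positive probability from the full-support hypotheses together with independence across the stick index and between the \(V\)- and \(\bth\)-sequences. Two implementation choices differ, and are worth recording. First, you make the inversion of the stick-breaking map explicit (\(\beta_{j,i}=\alpha_{j,i}/(1-\sum_{k<i}\alpha_{j,k})\)) and place the positive-probability event on the \(V\)'s; the paper states its event \(\Omega_\pi\) directly on the weights \(\pi^{\omega}_{k,\bx_j}\) and leaves the passage from full support of the \(V\)'s to positivity of that event implicit, so your version actually fills a small gap. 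Relatedly, you kill the tail mass by forcing \(V_{N,\bx_j}\) near \(1\), whereas the paper bounds the tail by noting that the first \(N_\Theta\) target masses already sum to nearly \(1\); both work, and your caveat about the boundary of \([0,1]^n\) is handled correctly, since relative neighborhoods of \((1,\dots,1)\) carry positive mass under a fully supported law on \([0,1]^n\). Second, for the \(w\)DDP you replace the paper's rational approximation \(P^0_{\bx_j}(A_k)\approx m_{j,k}/M\) with index blocks \(I_{j,k}\) by uniform weights \(1/N\) and \(j\)-dependent atoms chosen so that the empirical measure approximates \(P^0_{\bx_j}\); these are two bookkeepings of the same idea (many equal sticks whose atoms are distributed according to the target), and yours is arguably cleaner, though note that it still requires the joint law of \((\bth_{1,\bx_1},\dots,\bth_{1,\bx_n})\) to have full support on \(\Th^n\) (as does the paper's block argument), i.e., the hypothesis as stated for the \(w\)DDP must be read as full support on \(\Th^n\) rather than on \(\Th\).
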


The product-weak topology is often too coarse in statistical applications. The topology we consider next is the compact-open topology on \(\mP(\Th)^{\mX}\)~\cite{Kelley1975}. In this topology, a neighborhood basis at \(P^0\in \mP(\Th)^{\mX}\) is given by sets of the form
\begin{equation}
    \label{eq:support:compact-weak:basis}
    \Lset{P\in \mP(\Th)^{\mX}:\,\, \sup_{\bx\in K}\, \left|\int_\Theta f_i(\bth) dP_{\bx}(\bth) - \int_\Theta f_i(\bth) dP^0_{\bx}(\bth)\right| < \eps_i,\, i\in \set{1,\ldots, n}}
\end{equation}
for \(\eps_1,\ldots,\eps_n>0\), \(f_1,\ldots, f_n\in C_b(\Th)\) and \(K\subset \mX\) compact. As this topology is stronger, it is unlikely the DDP and its variants will still have full support on \(\mP(\Th)^{\mX}\). For this reason, we determine whether the support contains a subset of \(\mP(\Th)^{\mX}\) of functions of interest. For the weak topology on \(\mP(\Th)\), we consider the weakly continuous functions from \(\mX\) into \(\mP(\Th)\). 

If the support of a process does not contain \(C_W(\mX,\mP(\Th))\) then there is at least one \(P^0\in C_W(\mX,\mP(\Th))\) in the complement of the support. Since this set is open, it contains at least one set of the form~\eqref{eq:support:compact-weak:basis}. The following result shows that, under mild conditions, the support of both the DDP and \(\th\)DDP contains \(C_W(\mX,\mP(\Th))\). We defer its proof to Appendix~\ref{thm:support:compact-weak:proof}.

\begin{theorem}\label{thm:support:compact-weak}
    Let \(\GX\sim \DDP(\parV,\parTh)\) or \(\GX\sim \thDDP(\parV,G^0)\). Suppose the following conditions hold:
    \begin{enumerate}[leftmargin=24pt]
        \item{The processes \(\set{\set{\Vix:\, \bx\in \mX}}_{i\in\N}\) have a.s. continuous sample paths.
        }
        \item{For any \(\eps > 0\), continuous function \(h:\mX\to [0, 1]\) and \(K\subset \mX\) compact we have
            \begin{equation}
                \label{eq:vprocess:uniformApproximation}
                \mbP\left(\Lset{\omega\in \Omega:\, \sup_{\bx\in K} |\Vw_{1,\bx} - h(\bx)| < \eps}\right) > 0.
            \end{equation}
        }
        \item{If \(\GX\sim \DDP(\parV,G^0)\) then \(G^0\) has full support on \(\Theta\). 
        }        
        \item{If \(\GX\sim \DDP(\parV,\parTh)\) then for any open \(U\in\mB(\Th)\) and \(K\subset \mX\) compact we have
            \[
            \mbP(\set{\omega\in \Omega:\, \bthw_{1,\bx} \in U,\, \forall\, \bx\in K}) > 0.
            \]
        }
    \end{enumerate}
    Then, for any \(P^0\in C_W(\mX,\mP(\Th))\) we have
    \[
    \mbP\left(\Lset{\omega\in \Omega:\,\, \sup_{\bx\in K}\, \left|\int_\Theta f_i(\bth) d\Gxw(\bth) - \int_\Theta f_i(\bth) dP^0_{\bx}(\bth)\right| < \eps_i,\, i\in \set{1,\ldots, n}}\right) > 0
    \]
    for \(\eps_1,\ldots,\eps_n>0\), \(f_1,\ldots, f_n\in C_b(\Th)\) and \(K\) compact. In consequence, the support of the process in \(\mP(\Th)^{\mX}\) endowed with the compact-weak topology contains \(C_W(\mX,\mP(\Th))\). 
\end{theorem}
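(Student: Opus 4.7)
The plan is to show that for any basis neighborhood of the form~\eqref{eq:support:compact-weak:basis} centered at an arbitrary $P^0\in C_W(\mX,\mP(\Th))$, the probability that a sample path of $\GX$ lies in it is strictly positive. The strategy is to first approximate $P^0$ uniformly on $K$ by a finite discrete measure whose weights depend continuously on $\bx$, and then use the hypotheses to argue that with positive probability the DDP or $\th$DDP realizes this approximate discrete model uniformly on $K$.

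The first step is to construct the approximation. Since $P^0$ is weakly continuous and $K$ is compact, the image $P^0(K)$ is weakly compact in $\mP(\Th)$, hence tight by Prokhorov's theorem. Given a small $\delta>0$, I choose a compact $K_{\Th}\subset\Th$ with $P^0_{\bx}(K_{\Th})\geq 1-\delta$ for every $\bx\in K$, cover $K_{\Th}$ by finitely many open balls $B_1,\ldots,B_M$ small enough that each $f_i$ oscillates by at most $\delta$ on each ball, take a subordinate continuous partition of unity $\set{\chi_j}$ on $\Th$ with $\sum_j\chi_j\equiv 1$ on $K_{\Th}$, and pick representatives $\bth_j^*\in B_j$. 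The weights $\pi_j^*(\bx) := \int_{\Th}\chi_j\,dP^0_{\bx}$ are continuous in $\bx$ by weak continuity of $P^0$, satisfy $\sum_j\pi_j^*(\bx)\geq 1-\delta$, and yield the uniform bound $|\sum_j\pi_j^*(\bx)f_i(\bth_j^*)-\int f_i\,dP^0_{\bx}|\leq \delta(1+\nrmC{f_i})$ on $K$. To convert these into stick-breaking parameters bounded away from degeneracy, I rescale to $\pi_j^{**}(\bx) := (1-\delta_0)\pi_j^*(\bx)$ and set
\[
V_j^{**}(\bx) := \frac{\pi_j^{**}(\bx)}{1 - \sum_{k<j}\pi_k^{**}(\bx)},\quad j=1,\ldots,M,
\]
which are continuous functions $\mX\to[0,1]$ whose denominators are bounded below by $\delta_0$.

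The second step invokes the hypotheses to realize this approximation on a positive-probability event. Choose open neighborhoods $U_j\ni\bth_j^*$ small enough that each $f_i$ oscillates by at most $\delta_1$ on $U_j$. For $\GX\sim\thDDP(\parV,G^0)$, the event $\set{\bth_j\in U_j:j\leq M}$ has probability $\prod_{j=1}^M G^0(U_j)>0$ by independence of the atoms and the full support of $G^0$. For $\GX\sim\DDP(\parV,\parTh)$, condition~4 applied to each $U_j$ and $K$, together with independence of the atom processes across $j$, yields that $\bigcap_{j=1}^M\set{\bth_{j,\bx}^{\omega}\in U_j\,\forall\bx\in K}$ has positive probability. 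Independently, condition~2 applied with $h=V_j^{**}$ and tolerance $\delta_2$ for each $j$, together with the i.i.d. structure of the $V$-processes, gives that $\bigcap_{j=1}^M\set{\sup_{\bx\in K}|V_{j,\bx}^{\omega}-V_j^{**}(\bx)|<\delta_2}$ has positive probability. The independence between the $V$-processes and the atom processes then implies that the intersection $E$ of these events has positive probability.

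On $E$, induction on the stick-breaking recursion yields $\sup_{\bx\in K}|\pi_{j,\bx}^{\omega}-\pi_j^{**}(\bx)|\leq C_M\delta_2$ for $j\leq M$, where $C_M$ depends only on $M$. Combined with the identity $\sum_{j\geq 1}\pi_{j,\bx}^{\omega}=1$ and the lower bound $\sum_{j=1}^M\pi_j^{**}(\bx)\geq(1-\delta_0)(1-\delta)$, this forces the tail bound $\sum_{j>M}\pi_{j,\bx}^{\omega}\leq \delta+\delta_0+MC_M\delta_2$ uniformly on $K$. Together with the oscillation of $f_i$ on the $U_j$'s and the step-one estimate, a telescoping argument gives
\[
\sup_{\bx\in K}\Bigl|\int f_i\,d\Gxw-\int f_i\,dP^0_{\bx}\Bigr| \leq \delta(1+\nrmC{f_i}) + \delta_0\nrmC{f_i} + \delta_1 + (\nrmC{f_i}+1)(\delta+\delta_0+MC_M\delta_2),
\]
which is smaller than any prescribed $\eps_i$ provided $\delta,\delta_0,\delta_1,\delta_2$ are chosen sufficiently small (and $M$ is fixed afterwards). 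The main obstacle is the uniform control of the tail of the stick-breaking weights; it is resolved by the elementary but essential observation that on $E$ the uniform approximation of the first $M$ weights translates, via $\sum_j\pi_{j,\bx}^{\omega}=1$, into a matching uniform control on $\sum_{j>M}\pi_{j,\bx}^{\omega}$.
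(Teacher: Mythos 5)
Your proposal is correct and follows the same overall architecture as the paper's proof: discretize \(P^0\) over a compact \(K_{\Th}\) obtained by tightness, realize the resulting finite approximation on a positive-probability event using hypothesis~2 for the weights and hypotheses~3/4 for the atoms, and control the tail of the stick-breaking weights via \(\sum_j \pi_{j,\bx}^{\omega}=1\). Where you diverge is in the discretization and in the treatment of the weights. The paper first invokes an auxiliary interpolation lemma (Lemma~\ref{lem:support:approximation}) that replaces \(P^0\) on \(K\) by a convex combination \(\olP_{\bx}=\sum_k \vphi_k(\bx)P^0_{\bx_k}\) over a finite net in \(\mX\) with Lipschitz weights and a tight family, and then partitions \(K_{\Th}\) into measurable sets \(A_k\), so that the target weights are \(\olP_{\bx}(A_k)\); you instead use a continuous partition of unity \(\set{\chi_j}\) on \(\Th\) and take \(\pi_j^*(\bx)=\int\chi_j\,dP^0_{\bx}\), which are continuous in \(\bx\) directly by weak continuity of \(P^0\) (indicators of the \(A_k\) would not give this, which is precisely why the paper needs the interpolant \(\olP\)). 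Your route is somewhat more self-contained on this point. More significantly, the paper asserts that the event \(\Omega_\pi=\bigcap_k\set{\sup_{\bx\in K}|\piw_{k,\bx}-\olP_{\bx}(A_k)|<\eps/\NTh}\) ``has positive measure by hypothesis,'' leaving implicit the conversion of target weights into target values for the \(V\)-processes and the propagation of errors through the stick-breaking recursion; you make this explicit, including the rescaling by \(1-\delta_0\) that keeps the denominators \(1-\sum_{k<j}\pi_k^{**}(\bx)\) bounded below so that the targets \(V_j^{**}\) are well-defined continuous functions into \([0,1]\), and the induction giving \(\sup_{\bx\in K}|\pi_{j,\bx}^{\omega}-\pi_j^{**}(\bx)|\leq C_M\delta_2\). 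This fills a genuine gap in the exposition of the published argument. The only minor point you leave unaddressed is the measurability of events defined by suprema over \(K\), which follows from the a.s. continuity of the sample paths (hypothesis~1) and separability of \(K\), as the paper notes.
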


To construct a process \(\set{\Vx:\, \bx\in \mX}\) satisfying~\eqref{eq:vprocess:uniformApproximation} we use the same construction outlined in Section~\ref{sec:ddp:definition}. Let \(\set{Z_{\bx}:\, \bx\in \mX}\) be a Gaussian process with mean function \(\mu\) and covariance kernel \(\sigma\) and a.s. continuous sample paths. We define at any given \(\bx\in \mX\) the functions
\[
F_{B,\alpx}^{-1}(t) = 1 - (1 - t)^{1/\alpx}\quad\mbox{and}\quad F_{Z,\bx}(z) = \Phi\left(\frac{z - \mu_{\bx}}{\sigma_{\bx,\bx}}\right).
\]
Observe that
\[
|F_{B,\alpx}^{-1}(u) - F_{B,\alpx}^{-1}(v)| \leq \frac{1}{\alpx} |u-v|.
\]
If we let \(\Vix = F_{B,\alpx}^{-1}\circ F_{Z,\bx}(Z_{\bx})\) then \(\Vix\) has a.s. continuous sample paths and \(\Beta(1,\alpx)\) marginal distributions. Let \(K\subset \mX\) be compact, and let \(h:K\to (0, 1)\) be continuous. Then
\[
|\Vix - h(\bx)| \leq \frac{1}{\alpx}|F_{Z,\bx}(Z_{\bx}) - F_{B,\alpx}(h(\bx))| \leq \frac{1}{\sqrt{2\pi} \alpx\sigma_{\bx,\bx}}(Z_{\bx} - \ol{h}(\bx))
\]
where \(\ol{h} =  F_{Z,\bx}^{-1}\circ F_{B,\alpx}\circ h\). Note that if \(\alpx, \mu_{\bx}\)  and \(\sigma_{\bx,\bx}\)  are continuous, then so is \(\ol{h}:\mX\to \R\). Hence, if there exists \(c > 0\) such that \(\alpx\sigma_{\bx,\bx} \geq c\) then
\begin{align*}
    \sup_{\bx\in K}\,\left|\Vx - h(\bx)\right| \leq \frac{1}{c} \sup_{\bx\in K} |Z_{\bx} - \ol{h}(\bx)|.
\end{align*}
Therefore, it suffices to choose a process \(Z_{\bx}\) for which the event
\[
\mbP\Lset{\omega\in \Omega:\,\, \sup_{\bx\in K}|Z_{\bx} - \ol{h}(\bx)| < \eps} > 0
\]
for any continuous function \(\ol{h}:K\to \R\). Note the reproducing kernel Hilbert space (RKHS) associated to the covariance kernel
\begin{align*}
    \sigma_{\bx_1,\bx_2} = \sigma_0 e^{-\dX(\bx_1,\bx_2)^2/\tau^2}
\end{align*}
spans the space of all smooth functions if \(\tau > 0\) is allowed to vary freely \citep{choudhuri2007nonparametric}.

It is natural to characterize the support of the DDP or its variants in stronger topologies. One such topology arises when we endow \(\mP(\Th)\) with the {\em strong topology}. In this topology, a neighborhood basis at \(P^0\in\mP(\Th)\) is given by sets of the form
\[
\Lset{P\in \mP(\Th):\,\, \left|\int_\Theta f_i(\bth) dP(\bth) - \int_\Theta f_i(\bth) dP^0(\bth)\right| < \eps_i,\, i\in \bset{n}}
\]
for \(\eps_1,\ldots, \eps_n > 0\) and \(f_1,\ldots, f_n\in \LInf(\Th)\). Hence, we consider the (weak) product topology on \(\mP(\Th)^{\mX}\) when \(\mP(\Th)\) is endowed with the strong topology. We call this the {\em product-strong topology}. In this topology, a neighborhood basis at \(P^0\in \mP(\Th)^{\mX}\) is given by sets of the form
\[
\Lset{P\in \mP(\Th)X:\,\, \left|\int_\Theta f_{i,j}(\bth) dP_{\bx_j}(\bth) - \int_\Theta f_{i,j}(\bth) dP^0_{\bx_j}(\bth)\right| < \eps_{i,j},\, i,j\in \bset{n}}
\]
for \(\eps_{1,1},\ldots, \eps_{n,n} > 0\), \(\bx_1,\ldots, \bx_n\in \mX\) and \(f_{1,1},\ldots, f_{n,n}\in \LInf(\Th)\). By choosing simple functions, it becomes clear that the sets
\[
\Lset{P\in \mP(\Th)^{\mX}:\,\, P_{\bx_j}(B_{i,j}) \in I_j,\, i,j\in \bset{n}}
\]
for \(\bx_1,\ldots, \bx_n\in \mX\), \(I_{1,1},\ldots, I_{n,n}\subseteq [0, 1]\) open, and \(B_1,\ldots, B_n\in \mB(\Th)\) also form a neighborhood basis at \(P^0\). 

Theorem~\ref{thm:continuity:strong} suggests that neither the DDP not its variants will have full support on the product-strong topology. However, we are still able to characterize some key features of their support. We first introduce the following technical definition.

\begin{definition}
    Let \(\GX\) be a \(\mP(\Th)\)-valued random process on \(\mX\). 
    \begin{enumerate}[leftmargin=24pt]
        \item{If \(\GX \sim \DDP(\parV,\parTh)\) we define
            \[
            \mP(\Th)^{\mX}|_{\GX} := \Lset{P\in\mP(\Th)^{\mX}:\, \forall\, \bx\in \mX:\,\, P_{\bx} \ll G^0_{\bx}}.
            \]
        }
        \item{If \(\GX \sim \wDDP(\alpha,\parTh)\) we define
            \[
            \mP(\Th)^{\mX}|_{\GX} := \Lset{P\in\mP(\Th)^{\mX}:\, \forall\, \bx\in \mX:\,\, P_{\bx} \ll G^0_{\bx}}.
            \]
        }        
        \item{If \(\GX\sim \DDP(\parV,\parTh)\) we define
            \[
            \mP(\Th)^{\mX}|_{\GX} := \Lset{P\in\mP(\Th)^{\mX}:\, \forall\, \bx\in \mX:\,\, P_{\bx} \ll G^0}.
            \]
        }
    \end{enumerate}
\end{definition}

Therefore, we can associate to a DDP or to any of its variants a specific set of functions from \(\mX\) into \(\mP(\Th)\). The following theorem shows that, in fact, the support of DDP and its variants contain this set. We defer the proof of the theorem to Appendix~\ref{thm:support:product-strong:proof}

\begin{theorem}\label{thm:support:product-strong}
    Let \(\GX\) be a \(\mP(\Th)\)-valued process on \(\mX\). The following assertions are true. 
    \begin{enumerate}[leftmargin=24pt]
        \item{If \(\GX \sim \DDP(\parV,\parTh)\) and for any \(\bx_1,\ldots, \bx_n\in \mX\) the law of the random vector
            \[
            (\bth_{1,\bx_1}, \ldots, \bth_{1,\bx_n})
            \]
            has full support on \(\Th^n\) and the law of the random vector
            \[
            (V_{1,\bx_1}, \ldots, V_{1,\bx_n})
            \]
            has full support on \([0,1]^n\), then the support of \(\GX\) in the product-strong topology contains \(\mP(\Th)^{\mX}|_{\GX}\).
        }
        \item{If \(\GX \sim \wDDP(\alpha,\parTh)\) and for any \(\bx_1,\ldots, \bx_n\in \mX\) the law of the random vector
            \[
            (V_{1,\bx_1}, \ldots, V_{1,\bx_n})
            \]
            has full support on \([0,1]^n\), then the support of \(\GX\) in the product-strong topology contains \(\mP(\Th)^{\mX}|_{\GX}\).
        }        
        \item{If \(\GX\sim \thDDP(\parV,G^0)\), \(G^0\) has full support on \(\Th\), and for any \(\bx_1,\ldots, \bx_n\in \mX\) the law of the random vector
            \[
            (V_{1,\bx_1}, \ldots, V_{1,\bx_n})
            \]
            has full support on \([0,1]^n\) then the support of \(\GX\) in the product-strong topology contains \(\mP(\Th)^{\mX}|_{\GX}\).
        }
    \end{enumerate}
\end{theorem}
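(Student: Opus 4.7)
The plan is to verify that every basic neighborhood in the product-strong topology around an arbitrary $P^0\in\mP(\Th)^{\mX}|_{\GX}$ has positive probability under $\GX$. By the simple-function reduction noted just before the theorem, it suffices to show, for any $\bx_1,\ldots,\bx_n\in\mX$, sets $B_{i,j}\in\mB(\Th)$, and $\eps>0$, that
\[
\mbP\!\left(\Lset{\omega\in\Omega:\, |\Gw_{\bx_j}(B_{i,j}) - P^0_{\bx_j}(B_{i,j})| < \eps,\, i,j\in\bset{n}}\right) > 0.
\]
I would first replace each family $\{B_{1,j},\ldots,B_{n,j}\}$ by the finite partition $\mA_j=\{A_{1,j},\ldots,A_{m_j,j}\}$ of $\Th$ it generates, reducing the task to controlling $|\Gw_{\bx_j}(A_{k,j})-P^0_{\bx_j}(A_{k,j})|$ uniformly in $k,j$. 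The absolute-continuity assumption guarantees that any partition atom carrying positive $P^0_{\bx_j}$-mass also carries positive $G^0_{\bx_j}$-mass (or $G^0$-mass in the $\th$DDP case), so the desired target values are reachable via the process.

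Next I would construct an explicit rational discrete approximation: an integer $N$, rational weights $q_{i,j}\geq 0$ with common denominator $N$, and a cell assignment $\sigma(i,j)\in\bset{m_j}$, such that $\sum_{i=1}^N q_{i,j}\mathbf{1}_{A_{\sigma(i,j),j}}$ matches the $\mA_j$-values of $P^0_{\bx_j}$ within $\eps/2$, with at most $\eps/2$ left over. Then the event above contains $E_\theta\cap E_V$, where, in the DDP case,
\[
E_\theta := \Lset{\omega\in\Omega:\, \bth^\omega_{i,\bx_j}\in A_{\sigma(i,j),j}\text{ for all }i\leq N,\, j\leq n},
\]
\[
E_V := \Lset{\omega\in\Omega:\, \max_{i\leq N,\,j\leq n}|\pi^\omega_{i,\bx_j}-q_{i,j}|<\eps'\text{ and }\max_{j\leq n}\sum_{i>N}\pi^\omega_{i,\bx_j}<\eps'},
\]
for a small $\eps'>0$ (with $\bth_{i,\bx_j}$ replaced by $\bth_i$ in the $\th$DDP case and $\pi_{i,\bx_j}$ by $\pi_i$ in the $w$DDP case). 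Since the atom process and the stick-breaking process are independent by construction, $\mbP(E_\theta\cap E_V)=\mbP(E_\theta)\,\mbP(E_V)$, reducing the task to showing each factor is positive.

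For $E_V$, the joint full-support hypothesis on $(V_{1,\bx_1},\ldots,V_{1,\bx_n})$ combined with the continuity of the stick-breaking map, and the fact that keeping the $V$'s bounded strictly inside $(0,1)$ makes the residual mass $\prod_{i\leq N}(1-V_{i,\bx_j})$ uniformly small, exhibits an open set of targets in $[0,1]^{nN}$ of positive probability; the i.i.d.\ structure across $i\leq N$ then gives $\mbP(E_V)>0$. For $E_\theta$: in the DDP case, the joint full-support hypothesis on the atoms together with regularity of $G^0_{\bx_j}$ (which yields an open $G^0_{\bx_j}$-positive subset inside each $A_{\sigma(i,j),j}$) gives $\mbP(E_\theta)>0$; in the $\th$DDP case, one instead argues that the shared atoms, drawn i.i.d.\ from $G^0$, cover every cell $A_{k,j}$ with $P^0_{\bx_j}(A_{k,j})>0$ with positive probability once $N$ is large, after which the weights distribute mass among the covering atoms. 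The main obstacle is the combinatorial construction of $(q_{i,j},\sigma(i,j))$ matching all $j$-marginals simultaneously. This is routine in the DDP case (atoms and weights are freely chosen per $j$) and tractable in the $\th$DDP case (shared atoms need cover only individual $A_{k,j}$, not their intersections); the most delicate case is the $w$DDP, where a single weight sequence must serve every $j$ and only the atom-to-cell assignment varies across $j$, which is addressed by choosing $N$ large enough that near-uniform weights $q_i\approx 1/N$ can be regrouped into arbitrary marginal targets for each $j$.
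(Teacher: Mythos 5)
Your proposal is correct and follows essentially the same route as the paper's proof: reduce to simple functions over a finite measurable partition, use the absolute continuity $P^0_{\bx}\ll G^0_{\bx}$ (resp.\ $G^0$) to discard cells of zero base mass, split the target event into independent atom and weight events whose positivity follows from the finite-dimensional full-support hypotheses, and handle the $w$DDP by forcing near-uniform weights $\pi_i\approx 1/M$ that are regrouped into the per-$\bx_j$ marginal targets. The only cosmetic differences are that you carry per-$j$ partitions and rational weights in all three cases, whereas the paper uses a single common partition and reserves the rational-approximation device for the $w$DDP.
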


When \(\mP(\Th)\) is endowed with the strong topology, we can consider the associated compact-open topology on \(\mP(\Th)^{\mX}\). In this topology, the neighborhood basis at any \(P^0\in \mP(\Th)^{\mX}\) have the form~\eqref{eq:support:compact-weak:basis} for \(f_1,\ldots, f_n\in \LInf(\Th)\). 

In this case, the functions of interest are strongly continuous functions from \(\mX\) into \(\mP(\Th)\). In contrast to Theorem~\ref{thm:support:compact-weak} we cannot show that the support of the DDP not its variants contains this set. However, we can show the support contains the intersection between \(C_S(\mX,\mP(\Th))\) and the surrogate functions associated to a DDP or \(\th\)DDP. We defer the proof of the following result to Appendix~\ref{thm:support:compact-strong:proof}.

\begin{theorem}\label{thm:support:compact-strong}
    Let \(\GX\sim \DDP(\parV,\parTh)\) or \(\GX\sim \thDDP(\parV,G^0)\). Suppose the following conditions hold:
    \begin{enumerate}[leftmargin=24pt]
        \item{The processes \(\set{\set{\Vix:\, \bx\in \mX}}_{i\in\N}\) have a.s. continuous sample paths.
        }
        \item{For any \(\eps > 0\), continuous function \(h:\mX\to [0, 1]\) and \(K\subset \mX\) compact we have
            \[
            \mbP\left(\Lset{\omega\in \Omega:\, \sup_{\bx\in K} |\Vw_{1,\bx} - h(\bx)| < \eps}\right) > 0.
            \]
        }
        \item{If \(\GX\sim \DDP(\parV,G^0)\) then \(G^0\) has full support on \(\Theta\). 
        }        
        \item{If \(\GX\sim \DDP(\parV,\parTh)\) then there exists a measure \(G^0\) on \(\Th\) such that \(G^0_{\bx} \ll G^0\) for every \(\bx\in \mX\) and that for any \(A\in\mB(\Th)\) and \(K\subset \mX\) compact we have
            \[
            G^0(A) > 0 \quad\Rightarrow\quad\mbP(\set{\omega\in \Omega:\, \bthw_{1,\bx} \in A,\, \forall\, \bx\in K}) > 0.
            \]
        }
    \end{enumerate}
    Then \(C_S(\mX,\mP(\Th)) \cap \mP(\Th)^{\mX}|_{\GX}\) is in the support of \(\GX\) with respect to the compact-strong topology.
\end{theorem}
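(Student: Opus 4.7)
The strategy adapts Theorem~\ref{thm:support:compact-weak} by first reducing the $\LInf$ test functions to indicator functions of a measurable partition, and then exploiting the absolute continuity built into $\mP(\Th)^{\mX}|_{\GX}$ together with assumption~4 to control the placement of atoms within each cell uniformly throughout the compact set $K$. The $\th$DDP case proceeds in parallel, with assumption~3 and i.i.d.\ sampling of the atoms playing the role of assumption~4.

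Fix $P^0\in C_S(\mX,\mP(\Th))\cap\mP(\Th)^{\mX}|_{\GX}$, a compact $K\subset\mX$, tolerances $\eps_1,\ldots,\eps_n>0$, and $f_1,\ldots,f_n\in\LInf(\Th)$ with $\nrmLInf{f_i}\le 1$ (without loss of generality). Since any bounded measurable function on $\Th$ is a uniform limit of simple functions, for each $i$ pick a measurable partition $\set{B_{i,k}}_{k=1}^{M_i}$ of $\Th$ and constants $a_{i,k}\in[-1,1]$ with $\nrmLInf{f_i-\sum_k a_{i,k}\ind_{B_{i,k}}}<\eps_i/4$, and let $\set{A_1,\ldots,A_L}$ be the common refinement of these partitions. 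By the triangle inequality, for any probability measures $Q,R$ on $\Th$,
\[
\Bigl|\int f_i\, dQ-\int f_i\, dR\Bigr|\le \eps_i/2+\sum_{\ell=1}^{L}|Q(A_\ell)-R(A_\ell)|,
\]
so it suffices to exhibit a positive-probability event on which $\sup_{\bx\in K}|\Gxw(A_\ell)-P^0_{\bx}(A_\ell)|$ is small enough (for example, below $\min_i \eps_i/(2L)$) for every $\ell$.

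Let $G^0$ denote the reference measure: the one introduced in assumption~4 for the DDP case, or the single-atoms distribution for the $\th$DDP case. Since $P^0\in \mP(\Th)^{\mX}|_{\GX}$ we have $P^0_{\bx}\ll G^0$ for every $\bx$, so any cell with $G^0(A_\ell)=0$ satisfies $P^0_{\bx}(A_\ell)=0$ and is avoided almost surely by the atoms (by assumption~4 or by i.i.d.\ sampling from $G^0$); such cells can be discarded. Strong continuity of $P^0$ applied to $\ind_{A_\ell}$ shows that $\bx\mapsto P^0_{\bx}(A_\ell)$ is continuous on $K$. From here the argument parallels Theorem~\ref{thm:support:compact-weak}: truncate the stick-breaking series at some large $N$ so the tail mass $\sum_{i>N}\piixw$ is uniformly small on $K$ with high probability; assign each index $i\in\set{1,\ldots,N}$ to a cell $A_{\ell(i)}$; use assumption~4 (DDP) or the i.i.d.\ structure of the atoms ($\th$DDP) to secure a positive-probability event on which $\bthixw\in A_{\ell(i)}$ for every $\bx\in K$ and every $i\le N$; and use assumption~2 to obtain a further positive-probability event on which the stick-breaking weights $\piixw$ aggregate, for each $\ell$, to a value uniformly close on $K$ to $P^0_{\bx}(A_\ell)$, by matching the $V$-processes to a continuous target profile that realizes the prescribed cell masses.

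The main obstacle is the joint uniform-in-$\bx$ control: unlike the compact-weak case, indicator functions are discontinuous, so we cannot tolerate any drift of $\bthixw$ between cells as $\bx$ varies in $K$. Assumption~4 is essential precisely because it pins each atom inside a prescribed cell for every $\bx\in K$ simultaneously, rather than only pointwise. Combining this containment event with the uniform approximation of the continuous weight-profile targets by the $V$-processes (assumption~2), and invoking the independence between the $\bth$- and $V$-processes to multiply the two positive-probability events, produces the required event and completes the argument.
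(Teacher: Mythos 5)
Your proposal is correct and follows essentially the same route as the paper's proof: reduce the $\LInf$ test functions to indicators of a finite measurable partition, discard the $G^0$-null cells using $P^0\in\mP(\Th)^{\mX}|_{\GX}$, use the strong continuity of $P^0$ to make $\bx\mapsto P^0_{\bx}(A_\ell)$ a continuous target for the weight processes (assumption 2), pin each atom inside its prescribed cell uniformly over $K$ via assumption 4 (or the i.i.d.\ atoms in the $\th$DDP case), and intersect the two positive-probability events by independence. The only cosmetic difference is that you allow several indices per cell and aggregate their weights, whereas the paper assigns exactly one atom to each cell.
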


\subsection{Association structure}
\label{sec:ddp:association}

In statistical applications, it is of interest to study the behavior of the process \(\set{\Gx(B):\,\, \bx\in\mX}\) for some fixed \(B\in \mB(\Th)\). If \(\GX \sim \thDDP(\parV,G^0)\) the hypothesis of Theorem~\ref{thm:continuity:strong} ensure the process \(\set{\Gx(B):\,\, \bx\in\mX}\)  has a.s. continuous sample paths. As a consequence, for any \(d\in \N\) and \(f:[0,1]^d\to \R\) the process
\[
\set{f(\Gw_{\bx_1}(B),\ldots, \Gw_{\bx_d}(B)):\,\, (\bx_1,\ldots,\bx_d)\in \mX^d}
\]
has a.s. continuous sample paths. Furthermore, this holds for its expectation
\begin{equation}
    \label{eq:continuity:measureOfAssociationGeneric}
    F_B(\bx_1,\ldots, \bx_d) := \ev(f(G_{\bx_1}(B),\ldots, G_{\bx_d}(B))).
\end{equation}
Some functions of this form that are of statistical interest are the {\em measures of association}. For instance, the Pearson correlation coefficient is given by
\[
\rho(G_{\bx_1}(B)G_{\bx_2}(B)) = \frac{\ev(G_{\bx_1}(B)G_{\bx_2}(B)) - \ev(G_{\bx_1}(B))\ev(G_{\bx_2}(B))}{\ev(G_{\bx_1}(B)^2)^{1/2}\ev(G_{\bx_2}(B)^2)^{1/2}}.
\]
It is clear that it is continuous whenever the denominator is non-zero. Continuity implies
\[
\lim_{\bx\to\bx_0}\rho(G_{\bx}(B)G_{\bx_0}(B)) = 1.
\]
On the other hand, if
\[
\lim_{\dX(\bx,\bx_0) \to \infty} \ev(G_{\bx}(B)G_{\bx_0}(B)) = \ev(G_{\bx}(B))\ev(G_{\bx_0}(B))
\]
then it follows that
\[
\lim_{\dX(\bx,\bx_0) \to \infty}\rho(G_{\bx}(B)G_{\bx_0}(B)) = 0.
\]
Since the DDP and \(w\)DDP may not have a.s. strongly continuous paths, the above argument does not hold and, with positive probability, the process \(\set{\Gx(B):\,\, \bx\in\mX}\) may have discontinuous sample paths. In this case, a measure of association can act as a surrogate to study the regularity of this process, on average, at any point. The following theorem states that, under mild conditions, any function of the form~\eqref{eq:continuity:measureOfAssociationGeneric} is continuous. Its proof is given in Appendix~\ref{thm:continuity:association:proof}.

\begin{theorem}\label{thm:continuity:association}
    Let \(\GX\) be a \(\mP(\Th)\)-valued process on \(\mX\) and let \(B\in \mP(\Th)\). Suppose that one of the following assertions holds.
    \begin{enumerate}[leftmargin=24pt]
        \item{\(\GX\) is a continuous sample paths DDP with \(\GX \sim \DDP(\parV,\parTh)\).
        }
        \item{\(\GX\) is a continuous sample paths \(w\)DDP with \(\GX\sim \wDDP(\alpha,\parTh)\).
        }
    \end{enumerate}
    Furthermore, suppose that for any \(d\in \N\) the function
    \[
    (\bx_1,\ldots, \bx_d) \mapsto \mbP(\set{\omega\in\Omega:\,\, \bthw_{1,\bx_1}\in B,\ldots, \bthw_{1,\bx_d}\in B})
    \]
    is continuous. Then, for any \(d\in \N\), and continuous \(f:[0,1]^d\to \R\), the function \(F:\mX^d\to \R\) defined as
    \[
    F(\bx_1,\ldots, \bx_d) := \ev(f(G_{\bx_1}(B),\ldots, G_{\bx_d}(B))).
    \]
    is continuous.
\end{theorem}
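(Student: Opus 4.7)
The plan is to reduce, via the Stone--Weierstrass theorem, the task to showing that every multivariate moment of the form $\ev(\Gw_{\bx_1}(B)^{k_1}\cdots \Gw_{\bx_d}(B)^{k_d})$ is continuous in $(\bx_1,\ldots,\bx_d)$. Since $f$ is continuous on the compact cube $[0,1]^d$ and the random vector $(\Gw_{\bx_1}(B),\ldots,\Gw_{\bx_d}(B))$ takes values in $[0,1]^d$ a.s., any uniform polynomial approximation of $f$ transfers to a uniform bound on the expectation; hence continuity of $F$ reduces to continuity of all such moments. By rewriting powers as products with repeated arguments, it suffices to prove that for every $n\in\N$ the map
$$
(\bx_1,\ldots,\bx_n) \mapsto \ev\big(\Gw_{\bx_1}(B)\cdots \Gw_{\bx_n}(B)\big)
$$
is continuous.

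Expanding using the series representation of $\Gw_{\bx}(B)$ and the independence between the sequences of stick-breaking and atom processes yields
$$
\ev\big(\Gw_{\bx_1}(B)\cdots \Gw_{\bx_n}(B)\big) = \sum_{(i_1,\ldots,i_n)\in \N^n} w_{i_1\ldots i_n}(\bx_1,\ldots,\bx_n)\, q_{i_1\ldots i_n}(\bx_1,\ldots,\bx_n),
$$
where for $I=(i_1,\ldots,i_n)$ I set $w_I(\bx) := \ev\big(\prod_{j=1}^n \piw_{i_j,\bx_j}\big)$ and $q_I(\bx) := \ev\big(\prod_{j=1}^n \ind[\bthw_{i_j,\bx_j}\in B]\big)$. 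Termwise continuity is routine: for $w_I$, the a.s.\ continuity of the $\Vix$-processes (trivially satisfied in the $w$DDP case, where the weights do not depend on $\bx$) makes $\prod_j \piw_{i_j,\bx_j}$ a.s.\ continuous, and bounded convergence yields continuity of $w_I$; for $q_I$, grouping positions by equal values of $i_j$ and using the identical distribution across $i$ of the $\theta$-processes reduces $q_I$ to a product of factors of the form $\mbP(\bthw_{1,\bx_j}\in B,\, j\in J)$ for the corresponding subsets $J\subset\{1,\ldots,n\}$, each continuous by hypothesis.

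The main technical obstacle is to pass the limit through the infinite sum over $\N^n$. Given $\bx^{(m)}\to \bx^{(0)}$ in $\mX^n$, the a.s.\ identity $\sum_i \piw_{i,\bx}\equiv 1$ together with Fubini yields $\sum_I w_I(\bx^{(m)}) = 1$ for every $m$. Combined with the pointwise convergence $w_I(\bx^{(m)})\to w_I(\bx^{(0)})$, Scheff{\'e}'s lemma provides $\ell^1$-convergence of $\{w_I(\bx^{(m)})\}_I$ to $\{w_I(\bx^{(0)})\}_I$. Since $0\le w_I q_I\le w_I$, I would split the sum over a sufficiently large finite subset of $\N^n$, controlling the tail uniformly in $m$ via the $\ell^1$-convergence of $w$, and handling the finite piece by the pointwise continuity established above. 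This establishes continuity of the moment and, via the Stone--Weierstrass reduction, completes the proof.
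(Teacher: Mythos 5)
Your proposal is correct and follows essentially the same route as the paper: a Stone--Weierstrass reduction to continuity of the product moments, an expansion of those moments over multi-indices using the independence of the weight and atom processes, and termwise continuity plus a tail-control argument. The only substantive difference is in one technical step: where you invoke Scheff\'e's lemma on the weights \(w_I\) (which sum to one) to obtain \(\ell^1\)-convergence and hence uniform control of the tail, the paper truncates the series monotonically and bounds \(\sum_{i>N}\ev(\pi_{i,\bx})\) locally uniformly using the continuity of \(\bx\mapsto\ev(\pi_{i,\bx})\); both mechanisms are valid.
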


\section{Mixtures induced by dependent Dirichlet processes}
\label{sec:mixtures}

From now on, we let \(\mY\) be a Polish space, and we let \(\muY\) be a base measure on \((\mY,\mB(\mY))\). To allow for flexible statistical models, we also consider a Polish space \(\Gma\) representing the parameters of the induced mixture. We always assume \(\Gma\) is endowed with the Borel \(\sigma\)-algebra \(\mB(\Gma)\). The  dependent Dirichlet processes mixture models that we study are constructed from a fixed measurable function \(\psi:\mY\times\Gma\times \Th\to \Rp\) such that
\[
\forall\, (\bgma,\bth)\in \Gma\times\Th:\,\, \int_\mY \psi(\by,\bgma,\bth)\, d\muY(\by) = 1.
\]
The mixture induced by \(P\in \mP(\Th)^{\mX}\) is the map \(M^P:\Gma\times\mX\to \mP(\mY)\) formally defined as
\[
\forall\, (\bgma,\bth)\in \Gma\times\Th,\, B\in \mB(\mY):\,\, M^P_{\bgma,\bx}(B) := \int_B \int_{\Th} \psi(\by,\bgma,\bth) dP_{\bx}(\bth)d\muY(\by).
\]
In particular, a dependent Dirichlet processes mixture model is a map
\[
M : \mP(\Th)^{\mX} \to \mP(\mY)^{\Gma\times\mX}.
\]
By construction, the measure \(M^P_{\bgma,\bx}\) is absolutely continuous with respect to \(\muY\) for any \((\bgma,\bx)\in \Gma\times \mX\). For this reason, we distinguish the set \(\mD(\mY)\subset \mP(\mY)\) of probability measures on \(\mY\) that admit a density with respect to \(\muY\). We often use the identification
\[
\mD(\mY) \cong \Lset{p\in L^1(\mY,\mB(\mY), \muY):\,\, \int p(\by)\, d\muY(\by) = 1,\, p \geq 0}.
\]
In particular, for the dependent Dirichlet processes mixture models we study we have an explicit for their density. For this reason, for \(P\in \mP(\mY)\) we define the function \(\rho^P:\mY\times\Gma\times\mX\to \Rp\)
\[
\rho^P(\by,\bgma,\bx) := \int_{\Th} \psi(\by,\bgma,\bth) dP_{\bx}(\bth)
\]
representing the density of \(Q^P_{\bgma,\bx}\) with respect to \(\muY\). We sometimes write
\[
\rho^P_{\bgma,\bx}(\by) := \rho^P(\by,\bgma,\bx).
\]
Hence, the dependent Dirichlet processes mixture model \(M\) induces a map
\[
\rho : \mP(\Th)^{\mX} \to \mD(\mY)^{\Gma\times\mX}. 
\]
Depending on the choice of \(\muY\) and \(\psi\) the dependent Dirichlet processes mixture model may have regularizing properties and the density \(\rho^P\) may be, for instance, continuous. The following lemma shows that, under mild regularity and decay assumptions on \(\psi\), we can characterize points of continuity of \(\rho^P\) when \(P:\mX\to\mP(\Th)\) is weakly continuous. The proof of the following result is deferred to Appendix~\ref{lem:mixtures:continuousDensityAll:proof}.

\begin{lemma}\label{lem:mixtures:continuousDensityAll}
    Let \(P:\mX\to \mP(\Th)\) be weakly continuous, and suppose that \(\psi\) is continuous. Let \((\byo,\bgmao)\in \mY\times \Gma\). If for every \(\eps >0\) there exists an open neighborhood \(U_{\byo}\subset \mY\) of \(\byo\), an open neighborhood \(U_{\bgmao}\subset \Gma\) of \(\bgmao\), and a compact \(K_{\Th}\subset \Th\) such that
    \begin{equation}
        \label{eq:mixture:likelihoodDecayConditionTheta}
        \sup\set{\psi(\by,\bgma,\bth):\, (\by,\bgma,\bth)\in U_{\byo}\times U_{\bgmao}\times K_{\Th}^{c}} < \eps
    \end{equation}
    then \(\rho^P\) is continuous on \(U_{\byo}\times U_{\bgmao}\times \mX\). 
\end{lemma}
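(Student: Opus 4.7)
The plan is to verify continuity of $\rho^P$ at an arbitrary point $(\by_1,\bgma_1,\bx_1)\in U_{\byo}\times U_{\bgmao}\times\mX$ by fixing a sequence $(\by_n,\bgma_n,\bx_n)\to(\by_1,\bgma_1,\bx_1)$ (legitimate because all spaces are metric, hence first-countable) and controlling
\[
|\rho^P(\by_n,\bgma_n,\bx_n) - \rho^P(\by_1,\bgma_1,\bx_1)| \leq A_n + B_n,
\]
where
\[
A_n := \left|\int_\Th [\psi(\by_n,\bgma_n,\bth)-\psi(\by_1,\bgma_1,\bth)]\,dP_{\bx_n}(\bth)\right|
\]
and
\[
B_n := \left|\int_\Th \psi(\by_1,\bgma_1,\bth)\,dP_{\bx_n}(\bth) - \int_\Th \psi(\by_1,\bgma_1,\bth)\,dP_{\bx_1}(\bth)\right|.
\]

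For $B_n$, I would first argue that $\psi(\by_1,\bgma_1,\cdot)\in \Cb(\Th)$: continuity is immediate from the joint continuity of $\psi$, and boundedness follows from applying the decay hypothesis with $\eps=1$ (since $(\by_1,\bgma_1)\in U_{\byo}\times U_{\bgmao}$, the supremum outside some compact $K_\Th$ is at most $1$, and on $K_\Th$ the continuous function $\psi(\by_1,\bgma_1,\cdot)$ is bounded by compactness). Then weak continuity of $P$ at $\bx_1$ gives $P_{\bx_n}\Rightarrow P_{\bx_1}$, so $B_n\to 0$ directly from the definition of weak convergence.

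For $A_n$, I would fix $\eps>0$ and use the decay hypothesis to find a compact $K_\Th\subset\Th$ with $\psi(\by,\bgma,\bth)<\eps/4$ for all $(\by,\bgma)\in U_{\byo}\times U_{\bgmao}$ and $\bth\in K_\Th^c$. Splitting the integral $\int_\Th = \int_{K_\Th}+\int_{K_\Th^c}$ and noting that for $n$ large $(\by_n,\bgma_n)\in U_{\byo}\times U_{\bgmao}$, the tail contribution is bounded by $2\cdot(\eps/4)\cdot P_{\bx_n}(K_\Th^c)\leq \eps/2$. The main part requires uniform smallness of $|\psi(\by_n,\bgma_n,\bth)-\psi(\by_1,\bgma_1,\bth)|$ for $\bth\in K_\Th$. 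The crucial observation is that the set $C := \{(\by_n,\bgma_n):n\in\N\}\cup\{(\by_1,\bgma_1)\}$ is compact as a convergent sequence together with its limit, so $C\times K_\Th$ is compact and $\psi$ is uniformly continuous there; hence for $n$ large, $\sup_{\bth\in K_\Th}|\psi(\by_n,\bgma_n,\bth)-\psi(\by_1,\bgma_1,\bth)|<\eps/2$, giving $A_n<\eps$ eventually.

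The main obstacle, and the reason for introducing the sequential argument, is that Polish spaces need not be locally compact, so we cannot simply invoke uniform continuity of $\psi$ on a closed neighborhood of $(\by_1,\bgma_1)$ times $K_\Th$. Exploiting the compactness of a convergent sequence with its limit sidesteps this issue and keeps the argument entirely within the Polish framework. Everything else reduces to careful bookkeeping with the triangle inequality and the decay hypothesis.
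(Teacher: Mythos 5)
Your proof is correct and follows essentially the same route as the paper's: the same splitting of the increment into a \((\by,\bgma)\)-part controlled uniformly over the measures \(P_{\bx}\) (via the compact/tail decomposition of \(\Th\) driven by the decay hypothesis) and an \(\bx\)-part controlled by weak continuity together with \(\psi(\by_1,\bgma_1,\cdot)\in \Cb(\Th)\). The only difference is technical: where the paper obtains uniform control of the \((\by,\bgma)\)-increment of \(\psi\) over \(K_{\Th}\) by building an explicit finite subcover and a cutoff function, you get it from uniform continuity of \(\psi\) on the compact set formed by a convergent sequence together with its limit, which is a legitimate and somewhat cleaner shortcut.
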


The hypotheses imply that near \(\byo\) and \(\bgmao\) the function \(\psi\) tends to zero ``at infinity'' in \(\bth\). To gain insight into the consequences of these assumptions, we consider the following example. Let \(\mY = [0, 1]\) be endowed with the standard topology, and let \(\muY\) be the Lebesgue measure restricted to \([0, 1]\). Let
\[
\Th := \set{(\alpha,\beta)\in \R^2:\, \alpha,\beta \geq 1}
\]
be endowed with the standard subspace topology, and let \(\Gma = \emptyset\). If we consider the function
\[
\psi(y,\alpha,\beta) = \frac{y^{\alpha -1}(1 - y)^{\beta -1}}{B(\alpha,\beta)},
\] 
associated to a family of \(\Beta(\alpha,\beta)\) probability distributions on \([0, 1]\), then the induced mixture model would not satisfy the properties of the lemma. In fact, if \(y_0 = 1/2\) we can choose \(\alpha = \beta = t\) to see that, from Stirling's approximation,
\[
\psi(y_0,t,t) = \frac{1}{2^{2t - 2} B(t,t)} \sim \frac{1}{2^{2t - 2}}\frac{2^{2t - 1/2} t^{2t - 1/2}}{\sqrt{2\pi} t^{2t - 1}} = \frac{2^{3/2}}{\sqrt{2\pi}} t^{1/2}
\]
for \(t \gg 1\). Hence, \(\psi\) does not decay over \(\Th\) near \(y_0\). This can be mitigated by restricting the values of both \(\alpha\) and \(\beta\) to a compact set. A middle ground can be achieved if, for example, one parameter is constrained to a compact set, whereas the other becomes a parameter of the  induced mixture. For example,
\begin{align*}
    \Gma &:= \set{\alpha\in \R:\, 1\leq \alpha},\\
    \Th &:= \set{\beta\in \R:\, 1\leq\beta \leq \beta_{\max}}.
\end{align*}
In this case, the resulting induced mixture model satisfies the desired properties. Finally, note that failure to satisfy this condition is not always due to a lack of compactness. For instance, we could consider the model
\[
\Th := \set{(\alpha,\beta)\in \R^2:\, \alpha,\beta\in [1/4, 1/2]}. 
\]
In this case, not only \(\psi\) is discontinuous, we also have
\[
\lim_{y\to 0}\, \psi(y,\alpha,\beta) = \infty
\]
for any choice of \(\alpha,\beta\).

Due to the continuity properties of a DPP and its variants,  the conclusions of Lemma~\ref{lem:mixtures:continuousDensityAll} follow from milder hypotheses. In fact, in this case the same conclusion follows by only imposing  boundedness. We defer the proof of this result to Appendix~\ref{lem:mixtures:continuousDensity:proof}

\begin{lemma}\label{lem:mixtures:continuousDensity}
    Let \(\GX\) be a \(\mP(\Th)\)-valued process on \(\mX\) and let \(\psi: \mY\times\Gma\times \Th \to \Rp\). Suppose one of following conditions hold:
    \begin{enumerate}[leftmargin=24pt]
        \item{\(\GX\) is a continuous parameter DDP, with \(\GX \sim \DDP(\parV,\parTh)\).
        }
        \item{\(\GX\) is a continuous parameter \(w\)DDP, with \(\GX\sim \wDDP(\alpha,\parTh)\).
        }
        \item{\(\GX\) is a continuous parameter \(\th\)DDP, with \(\GX\sim \thDDP(\parV,G^0)\).
        }
    \end{enumerate}
    Furthermore, suppose that \(\psi\) is continuous. Let \((\byo,\bgmao)\in \mY\times \Gma\). If there exists an open neighborhood \(U_{\byo}\subset \mY\) of \(\byo\) and an open neighborhood \(U_{\bgmao}\subset \Gma\) of \(\bgmao\) such that
    \[
    \sup\set{\psi(\by,\bgma,\bth):\, (\by,\bgma,\bth)\in U_{\byo}\times U_{\bgmao}\times \Th} < \infty
    \]
    then for a.e. \(\omega\in \Omega\) the function \(\qGw\) is continuous at any \((\by,\bgma,\bx)\in \mY\times \Gma\times \mX\).
\end{lemma}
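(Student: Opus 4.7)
\textbf{Proof plan for Lemma~\ref{lem:mixtures:continuousDensity}.}  The plan is to use the stick-breaking representation to write $\rho^{\Gw}$ as a pointwise convergent series, reduce continuity at an arbitrary $(\byo,\bgmao,\bxo)$ to a truncation argument, and control the tail uniformly in $\bx$ near $\bxo$ using the structure of the weights.  Fix $\omega$ in the full-probability event on which the sample paths of the underlying processes are continuous and on which $\sum_{i\in\N}\piixw = 1$ for every $\bx\in\mX$ (cf.\ the construction of the DDP and its variants in Section~\ref{sec:ddp:definition}).  In the DDP case
\begin{equation*}
    \rho^{\Gw}(\by,\bgma,\bx) \;=\; \sum_{i=1}^{\infty} \piixw\,\psi(\by,\bgma,\bthixw),
\end{equation*}
with analogous expressions replacing $\piixw$ by $\piiw$ for the $w$DDP and $\bthixw$ by $\bthiw$ for the $\th$DDP.

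The first step is to observe that each term of the series is continuous in $(\by,\bgma,\bx)$: this follows from the continuity of $\psi$, together with the fact that, by the continuity hypotheses in Definition~\ref{def:continuousDDP}, both $\bx\mapsto\bthixw$ and $\bx\mapsto\Vixw$ have continuous sample paths, so the finite partial sum $S_N(\by,\bgma,\bx) := \sum_{i=1}^{N}\piixw\,\psi(\by,\bgma,\bthixw)$ is continuous on $\mY\times\Gma\times\mX$ for every fixed $N$.  The second step is to bound the remainder.  Let $M := \sup\set{\psi(\by,\bgma,\bth):\,(\by,\bgma,\bth)\in U_{\byo}\times U_{\bgmao}\times\Th} < \infty$ by hypothesis.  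For $(\by,\bgma)\in U_{\byo}\times U_{\bgmao}$ and any $\bx$,
\begin{equation*}
    \Bigl|\rho^{\Gw}(\by,\bgma,\bx) - S_N(\by,\bgma,\bx)\Bigr| \;\leq\; M\,\sum_{i>N}\piixw \;=\; M\,\prod_{j=1}^{N}(1 - \Vjxw),
\end{equation*}
where the last equality uses the stick-breaking identity $\sum_{i\leq N}\piixw = 1 - \prod_{j\leq N}(1-\Vjxw)$ (and in the $w$DDP case the product simplifies to $\prod_{j\leq N}(1-\Vjw)$, which is independent of $\bx$).

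The third step is the crux: we must make this tail uniformly small in a neighborhood of $\bxo$.  Because $\Gw_{\bxo}\in\mP(\Th)$, we have $\prod_{j=1}^{\infty}(1 - \Vw_{j,\bxo}) = 0$ on our full-probability event, so given $\eps > 0$ we may choose $N$ so that $M\prod_{j=1}^{N}(1 - \Vw_{j,\bxo}) < \eps/6$.  By continuity of the sample paths of each $V_{j,\cdot}^{\omega}$, the finite product $\bx\mapsto \prod_{j=1}^{N}(1-\Vjxw)$ is continuous at $\bxo$, so there is an open neighborhood $U\subset\mX$ of $\bxo$ on which $M\prod_{j=1}^{N}(1-\Vjxw) < \eps/3$.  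Combined with the continuity of $S_N$, which provides a neighborhood of $(\byo,\bgmao,\bxo)$ on which $|S_N(\by,\bgma,\bx) - S_N(\byo,\bgmao,\bxo)| < \eps/3$, the triangle inequality yields
\begin{equation*}
    |\rho^{\Gw}(\by,\bgma,\bx) - \rho^{\Gw}(\byo,\bgmao,\bxo)| < \eps
\end{equation*}
on a neighborhood of $(\byo,\bgmao,\bxo)$ contained in $U_{\byo}\times U_{\bgmao}\times U$.

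The main obstacle is precisely the uniformity in $\bx$ of the tail control in the third step; the boundedness hypothesis on $\psi$ only provides an $\bx$-uniform pointwise domination $M\sum_{i>N}\piixw$, and without the compactness/decay hypothesis of Lemma~\ref{lem:mixtures:continuousDensityAll} it is the product form of the stick-breaking tail, together with continuity of the $V_{j,\cdot}^{\omega}$, that lets us transfer the pointwise fact $\prod_{j}(1-\Vw_{j,\bxo}) = 0$ to a neighborhood of $\bxo$.  For the $w$DDP the argument is actually simpler, since the tail $M\sum_{i>N}\piiw$ is independent of $\bx$ and the $V_j^\omega$ are i.i.d.\ random variables rather than processes, so one only needs to invoke $\sum_i\piiw = 1$ a.s.\ and the continuity of $S_N$ in $(\by,\bgma,\bx)$ via the continuous processes $\bthix$.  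In all three cases the full-probability event on which the argument works is the intersection of the null set from Definition~\ref{def:ddp}(3) (or its analogues) with the null sets where sample-path continuity fails; this completes the proof.
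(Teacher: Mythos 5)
Your proposal is correct and follows essentially the same route as the paper's proof: truncate the stick-breaking series, use continuity of the finitely many processes $V_{j,\cdot}^{\omega}$ and $\bth_{i,\cdot}^{\omega}$ to get continuity of the partial sum, and transfer the tail bound at $\bxo$ to a neighborhood; your identity $\sum_{i>N}\piixw=\prod_{j\le N}(1-\Vjxw)$ is just the paper's $1-\sum_{i\le N}\piixw$ written in product form. No gaps.
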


\section{Properties of dependent Dirichlet processes mixture models}

\subsection{Continuity}\label{sec:mixtures:continuity}

Mixture models have a regularizing effect. Under the same assumptions of Lemma~\ref{lem:mixtures:continuousDensity} the dependent Dirichlet processes mixture model \(M\) maps weakly continuous into uniformly continuous functions from \(\mX\) into \(\mP(\Th)\). We defer the proof of this result to Appendix~\ref{thm:mixtures:continuity:uniformAll:proof}.

\begin{theorem}\label{thm:mixtures:continuity:uniformAll}
    Suppose that \(\muY\) is locally finite and that \(\psi\) is continuous. Then, for every \(P\in C_W(\mX,\mP(\Th))\) the induced mixture \(M(P)\) is uniformly continuous, i.e., 
    \[
    \lim_{(\bgma,\bx)\to(\bgmao,\bxo)}\,\nrm{M^P_{\bgma,\bx} -M^P_{\bgmao,\bxo}}_{\TV} = 0
    \]
    for any \(\bgmao\in\Gma\) and \(\bxo\in \mX\).
\end{theorem}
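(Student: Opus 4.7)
The plan is to deduce $\nrm{M^P_{\bgma,\bx}-M^P_{\bgmao,\bxo}}_{\TV}\to 0$ by first establishing the total variation continuity of an auxiliary kernel map, and then transferring that continuity to $M(P)$ via the Skorokhod representation theorem. For $(\bgma,\bth)\in\Gma\times\Th$, let $\Psi(\bgma,\bth)\in\mP(\mY)$ denote the probability measure with density $\psi(\cdot,\bgma,\bth)$ with respect to $\muY$. The first step is to show that $\Psi:\Gma\times\Th\to(\mP(\mY),\nrm{\cdot}_{\TV})$ is continuous. Given any sequence $(\bgma_n,\bth_n)\to(\bgma_0,\bth_0)$, continuity of $\psi$ yields $\psi(\by,\bgma_n,\bth_n)\to\psi(\by,\bgma_0,\bth_0)$ pointwise in $\by$; since $\int_\mY\psi(\by,\bgma,\bth)\,d\muY(\by)=1$ for every $(\bgma,\bth)$, Scheff\'e's lemma gives
\begin{equation*}
    \nrm{\Psi(\bgma_n,\bth_n)-\Psi(\bgma_0,\bth_0)}_{\TV} = \int_\mY|\psi(\by,\bgma_n,\bth_n)-\psi(\by,\bgma_0,\bth_0)|\,d\muY(\by) \longrightarrow 0.
\end{equation*}

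Next, fix $(\bgmao,\bxo)$ and let $(\bgma_n,\bx_n)\to(\bgmao,\bxo)$ be arbitrary. Weak continuity of $P$ implies $P_{\bx_n}\to P_{\bxo}$ weakly on the Polish space $\Th$, so by the Skorokhod representation theorem there exist random variables $\tilde{\bth}_n\sim P_{\bx_n}$ and $\tilde{\bth}_0\sim P_{\bxo}$ on a common probability space such that $\tilde{\bth}_n\to\tilde{\bth}_0$ almost surely. For any measurable $f:\mY\to\R$ with $\nrmLInf{f}\leq 1$, Fubini's theorem (applicable since $\muY$ is $\sigma$-finite, a consequence of local finiteness on the separable space $\mY$) gives
\begin{equation*}
    \int_\mY f\,dM^P_{\bgma,\bx} = \int_\Th\int_\mY f\,d\Psi(\bgma,\bth)\,dP_\bx(\bth) = \ev\left[\int_\mY f\,d\Psi(\bgma,\tilde{\bth})\right]
\end{equation*}
whenever $\tilde{\bth}\sim P_\bx$. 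Applying this identity at $(\bgma_n,\bx_n)$ and $(\bgmao,\bxo)$, subtracting, and invoking the elementary bound $|\int f\,d\mu-\int f\,d\nu|\leq\nrm{\mu-\nu}_{\TV}$ (valid for $\nrmLInf{f}\leq 1$), we obtain after taking supremum over such $f$
\begin{equation*}
    \nrm{M^P_{\bgma_n,\bx_n}-M^P_{\bgmao,\bxo}}_{\TV}\leq \ev\left[\nrm{\Psi(\bgma_n,\tilde{\bth}_n)-\Psi(\bgmao,\tilde{\bth}_0)}_{\TV}\right].
\end{equation*}

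The integrand on the right is uniformly bounded by $2$ and, by the continuity of $\Psi$ established in the first step combined with $(\bgma_n,\tilde{\bth}_n)\to(\bgmao,\tilde{\bth}_0)$ almost surely, tends to $0$ almost surely. Dominated convergence then gives $\ev[\nrm{\Psi(\bgma_n,\tilde{\bth}_n)-\Psi(\bgmao,\tilde{\bth}_0)}_{\TV}]\to 0$, which completes the proof. The main obstacle is the first step: total variation convergence of $\Psi$ is strictly stronger than pointwise convergence of $\psi$, and Scheff\'e's lemma closes exactly this gap by exploiting that $\psi(\cdot,\bgma,\bth)$ is a probability density for every $(\bgma,\bth)$. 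This is also why only continuity of $\psi$ is required here, in contrast to the boundedness and decay hypotheses demanded for the pointwise density continuity of Lemma~\ref{lem:mixtures:continuousDensityAll}.
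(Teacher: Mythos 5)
Your proof is correct, and it takes a genuinely different route from the paper's. The paper argues by hand: it extracts compact sets $K_{\mY}\subset\mY$ and $K_{\Th}\subset\Th$ carrying most of the mass of $M^P_{\bgmao,\bxo}$ and $P_{\bxo}$, builds finite covers on which $\psi$ oscillates by less than a prescribed amount, splits $\psi$ into a part supported near $K_{\Th}$ and a remainder via a continuous cutoff in $\bth$, and then reduces the uniform-in-$B$ estimate to finitely many applications of the weak continuity of $P$ against the test functions $\bth\mapsto \psi^0(\by_k,\bgmao,\bth)$ --- a fully explicit $\eps$-management argument. You instead observe that the normalization $\int_{\mY}\psi(\by,\bgma,\bth)\,d\muY(\by)\equiv 1$ upgrades pointwise continuity of $\psi$ to total-variation continuity of the kernel map $\Psi:\Gma\times\Th\to(\mP(\mY),\nrm{\cdot}_{\TV})$ via Scheff\'e, convert the weak convergence $P_{\bx_n}\to P_{\bxo}$ into almost sure convergence of representing random elements via Skorokhod (legitimate since $\Th$ is Polish and $\mX$, $\Gma$ are metrizable, so sequences suffice), and close with dominated convergence against the uniform bound $\nrm{\Psi(\bgma_n,\tilde{\bth}_n)-\Psi(\bgmao,\tilde{\bth}_0)}_{\TV}\leq 2$. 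The Fubini interchange is correctly justified: local finiteness of $\muY$ on the Lindel\"of space $\mY$ gives $\sigma$-finiteness, and the integrand is dominated by the probability density $\psi$. Your version is shorter, avoids all covering and cutoff constructions, and makes transparent why no boundedness or decay of $\psi$ in $\bth$ is needed here; what the paper's approach buys is that it is elementary and self-contained, and its intermediate devices (the compact exhaustion, the partition of $K_{\mY}$, the cutoff decomposition of $\psi$) are reused almost verbatim in the support results of Section~\ref{sec:mixtures:support}.
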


As a consequence of this result, the induced mixture of a continuous parameter DDP or its variants has uniformly continuous sample paths.

\begin{corollary}
    Let \(\GX\) be a \(\mP(\Th)\)-valued process on \(\mX\) and let \(\psi: \mY\times\Gma\times \Th \to \Rp\). Suppose \(\GX\) is a continuous parameter DDP, a continuous parameter \(w\)DDP or a continuous parameter \(\th\)DDP. Then for a.e. \(\omega\in \Omega\) the map \(M(\Gw)\) is uniformly continuous, i.e., 
    \[
    \lim_{(\bgma,\bx)\to(\bgmao,\bxo)}\,\nrm{M^{\Gw}_{\bgma,\bx} -M^{\Gw}_{\bgmao,\bxo}}_{\TV} = 0
    \]
    for any \(\bgmao\in\Gma\) and \(\bxo\in \mX\).
\end{corollary}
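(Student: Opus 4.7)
The proof is essentially a direct combination of two earlier results, so the plan is very short. The strategy is to use Theorem~\ref{thm:continuity:weak} to extract a.s.\ weakly continuous sample paths from the hypotheses, and then feed each such sample path into Theorem~\ref{thm:mixtures:continuity:uniformAll} to obtain uniform continuity of the induced mixture.

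In more detail: Theorem~\ref{thm:continuity:weak} asserts that under the continuous-parameter assumption — regardless of whether \(\GX\) is a DDP, \(w\)DDP or \(\th\)DDP — there exists a null set \(N\subset\Omega\) such that for every \(\omega\in\Omega\setminus N\) and every \(f\in \Cb(\Th)\) the map \(\bx\mapsto \int f\, d\Gxw\) is continuous on \(\mX\). That is exactly the statement that \(\bx\mapsto \Gw_{\bx}\) belongs to \(C_W(\mX,\mP(\Th))\) for a.e. \(\omega\).

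Fixing any such \(\omega\) and invoking the standing hypotheses of Section~\ref{sec:mixtures} (namely that \(\muY\) is locally finite and \(\psi:\mY\times\Gma\times\Th\to\Rp\) is continuous), Theorem~\ref{thm:mixtures:continuity:uniformAll} applied to \(P = \Gw\in C_W(\mX,\mP(\Th))\) yields that \(M^{\Gw}\) is uniformly continuous on \(\Gma\times\mX\), i.e.,
\[
\lim_{(\bgma,\bx)\to(\bgmao,\bxo)}\,\nrm{M^{\Gw}_{\bgma,\bx} - M^{\Gw}_{\bgmao,\bxo}}_{\TV} = 0
\]
for every \((\bgmao,\bxo)\in\Gma\times\mX\). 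Since this holds for every \(\omega\in\Omega\setminus N\) and \(\mbP(N)=0\), the conclusion follows.

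There is no real obstacle here; the only thing to double-check is that the null set produced by Theorem~\ref{thm:continuity:weak} does not depend on the particular \((\bgmao,\bxo)\) or on the choice among DDP/\(w\)DDP/\(\th\)DDP — which is immediate from the statement of that theorem, since the ``for a.e.\ \(\omega\)'' quantifier is uniform in \(\bxo\) and \(f\). Thus the corollary follows at once and requires no computation beyond these two invocations.
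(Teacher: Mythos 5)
Your proposal is correct and matches the paper's intended argument exactly: the corollary is stated in the paper as an immediate consequence of Theorem~\ref{thm:mixtures:continuity:uniformAll}, obtained by applying it pathwise to the a.s.\ weakly continuous sample paths guaranteed by Theorem~\ref{thm:continuity:weak}. Your remark that the continuity of \(\psi\) and local finiteness of \(\muY\) must be carried over as standing hypotheses is also the right reading of the corollary's (somewhat terse) statement.
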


\subsection{Support}
\label{sec:mixtures:support}

As in the case of a DDP or any of its variants, it is of interest to determine the effect that an induced mixture has on the support. As for the induced mixture models that we study the probability measures on \(\mY\) admit a density with respect to \(\muY\), we may interpret the sample paths of the induced mixture as elements of \(\mD(\mY)^{\Gma\times\mX}\). This allows us to consider other topologies defined in terms of the density of the induced mixture model. 

On \(\mD(\mY)\) we consider the topology induced by the Hellinger distance
\[
\dHel(p_1,p_2)^2 := \frac{1}{2}\int_{\mY}(\sqrt{p_1(\by)} - \sqrt{p_2(\by)})^2d\muY(\by) = 1 - \int_{\mY} \sqrt{p_1(\by)p_2(\by)}d\muY(\by),
\]
by the \(\LInf\) distance
\[
\dLInf(p_1,p_2):= \sup_{\by \in \mY}\,\, |p_1(\by) - p_2(\by)|,
\]
and by the Kullback-Leibler (KL) divergence
\[
\KL{p_1}{p_2} := \int_{\mY}p_1(\by) \log\left(\frac{p_1(\by)}{p_2(\by)}\right)d\muY(\by).
\]
for \(p_1,p_2\in \mD(\mY)\).

\subsubsection{The Hellinger distance}

We define the {\em product-Hellinger} topology on \(\mD(\mY)^{\Gma\times\mX}\) as follows. In this topology, a neighborhood basis at \(\mPo\in \mD(\mY)^{\Gma\times\mX}\) is given by sets of the form
\[
\Lset{P\in \mD(\mY)^{\Gma\times\mX}:\,\, \dHel(p_{\bgmai,\bxi}, \po_{\bgmai,\bxi}) < \eps_i}
\]
for some \(\eps_1,\ldots, \eps_n > 0\), \(\bgma_1,\ldots,\bgma_n\in\Gma\) and \(\bx_1,\ldots,\bx_n\in \mX\).  The following result shows that any neighborhood of the image of \(P\in\mP(\Th)^{\mX}\) under the induced mixture on the product-Hellinger topology contains, with positive probability, the image of a sample path of a DDP or its variants under the same induced mixture. We defer the proof of the following result to Appendix~\ref{thm:mixtures:support:hellingerWeakStrong:proof}.

\begin{theorem}\label{thm:mixtures:support:weakHellinger}
    Suppose that \(\muY\) is locally finite, that \(\psi\) is continuous and satisfies~\eqref{eq:mixture:likelihoodDecayConditionTheta} for any \((\by,\bgma)\in \mY\times\Gma\), and that the hypotheses of Theorem~5 in Part I hold. Then, for any \(\mPo\in \mP(\Th)^{\mX}\) the event
    \[    \Lset{\omega\in\Omega:\,\, \dHel(\qPo_{\bgmai,\bxi}, \qGw_{\bgmai,\bxi}) < \eps_i}
    \]
    has positive probability for any \(\eps_1,\ldots, \eps_n > 0\), \(\bgma_1,\ldots,\bgma_n\in\Gma\) and \(\bx_1,\ldots,\bx_n\in \mX\).
\end{theorem}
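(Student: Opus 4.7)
The plan is to establish continuity of the induced-mixture map \(\Phi: P\mapsto \rho^P\) from \(\mP(\Th)^{\mX}\) endowed with the product-weak topology into \(\mD(\mY)^{\Gma\times\mX}\) endowed with the product-Hellinger topology, and then to invoke Theorem~5 in Part~I on a suitable preimage.

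First, I would check that for every \((\by,\bgma)\in \mY\times\Gma\) the section \(\psi(\by,\bgma,\cdot):\Th\to \Rp\) lies in \(\Cb(\Th)\). Continuity in \(\bth\) is inherited from the joint continuity of \(\psi\); boundedness and vanishing at infinity in \(\bth\) follow from the decay condition~\eqref{eq:mixture:likelihoodDecayConditionTheta} at \((\by,\bgma)\), which for each \(\eps>0\) supplies a compact \(K_{\Th}\subset \Th\) outside which \(\psi(\by,\bgma,\cdot)<\eps\).

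Second, fix \((\bgma,\bx)\in \Gma\times\mX\) and consider the map \(\Phi_{\bgma,\bx}:\mP(\Th)\to \mD(\mY)\), \(Q\mapsto \int_{\Th}\psi(\cdot,\bgma,\bth)\,dQ(\bth)\). Since both the weak topology on \(\mP(\Th)\) and the Hellinger topology on \(\mD(\mY)\) are metrizable, continuity can be verified sequentially. If \(Q_n\to Q\) weakly, then the first step together with the portmanteau theorem yields pointwise convergence \(\Phi_{\bgma,\bx}(Q_n)(\by)\to \Phi_{\bgma,\bx}(Q)(\by)\) for every \(\by\in\mY\). Both limits are probability densities with respect to \(\muY\), so Scheff\'e's lemma gives convergence in \(L^1(\muY)\). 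The elementary inequality \((\sqrt{a}-\sqrt{b})^2\leq |a-b|\) for \(a,b\geq 0\) then implies
\[
2\,\dHel(\Phi_{\bgma,\bx}(Q_n),\Phi_{\bgma,\bx}(Q))^2 \leq \nrm{\Phi_{\bgma,\bx}(Q_n) - \Phi_{\bgma,\bx}(Q)}_{L^1(\muY)},
\]
so Hellinger convergence holds. Because continuity into a product topology is checked coordinatewise, this shows \(\Phi\) is continuous from product-weak to product-Hellinger.

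Third, given \(P^0\in \mP(\Th)^{\mX}\), \(\eps_1,\ldots,\eps_n>0\) and \((\bgma_1,\bx_1),\ldots,(\bgma_n,\bx_n)\in \Gma\times\mX\), the set
\[
U := \Lset{Q\in \mD(\mY)^{\Gma\times\mX}:\, \dHel(q_{\bgmai,\bxi},\qPo_{\bgmai,\bxi})<\eps_i,\, i\in \bset{n}}
\]
is a basic open neighborhood of \(\qPo\) in the product-Hellinger topology. By continuity of \(\Phi\), the preimage \(\Phi^{-1}(U)\) is an open neighborhood of \(P^0\) in the product-weak topology and therefore contains a basic open set \(V\) of the form appearing in Theorem~5 of Part~I, namely one determined by finitely many test functions in \(\Cb(\Th)\) evaluated at finitely many \(\bx_j\). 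That theorem then yields \(\mbP(\set{\omega\in\Omega:\, \GX^{\omega}\in V}) > 0\); since \(V\subset \Phi^{-1}(U)\), this delivers the desired positive probability of the event in the statement. The main obstacle is the second step: the decay condition~\eqref{eq:mixture:likelihoodDecayConditionTheta} is essential to place \(\psi(\by,\bgma,\cdot)\) in \(\Cb(\Th)\) and thus enable pointwise convergence of the densities from weak convergence of the mixing measures, while the subsequent Scheff\'e argument and the \(L^1\)-to-Hellinger bound are routine.
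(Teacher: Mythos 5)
Your proof is correct, but it takes a genuinely different route from the paper's. You establish that the induced-mixture map is continuous from the product-weak topology on \(\mP(\Th)^{\mX}\) into the product-Hellinger topology via a soft chain: the decay condition~\eqref{eq:mixture:likelihoodDecayConditionTheta} places each section \(\psi(\by,\bgma,\cdot)\) in \(\Cb(\Th)\), so weak convergence of the mixing measures gives pointwise convergence of the densities; Scheff\'e's lemma upgrades this to \(L^1(\muY)\) convergence; and \((\sqrt a-\sqrt b)^2\le|a-b|\) converts \(L^1\) into Hellinger. You then pull back a basic Hellinger neighborhood to an open product-weak neighborhood of \(P^0\), pick a basic set inside it, and invoke Theorem~\ref{thm:support:product-weak} (Theorem~5 of Part~I). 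The paper argues quantitatively instead: it replaces \(P^0\) by a weakly continuous interpolant (Lemma~\ref{lem:support:approximation}), extracts a compact \(K_{\mY}\) carrying most of the mass, builds a finite \((\by_k,\bgma_\ell)\)-net using the uniform estimates of Lemma~\ref{lem:mixtures:uniformContinuityAux}, and bounds the Hellinger affinity by an explicit four-term decomposition before applying the same Theorem~5 to the finitely many test functions \(\psi(\by_k,\bgma_\ell,\cdot)\). Your argument is shorter and does not even use the local finiteness of \(\muY\); what the paper's harder construction buys is uniformity in \((\bgma,\bx)\) over compact sets, so that essentially the same proof also yields the compact-Hellinger statement (Theorem~\ref{thm:mixtures:support:strongHellinger}), which your coordinatewise continuity argument would not give without further work. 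One shared caveat: like the paper, you really show that the stated event contains a measurable event of positive probability, which is all that is needed here.
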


A stronger topology induced by the Hellinger distance is what we call the {\em compact-Hellinger} topology on \(\mD(\mY)^{\Gma\times\mX}\). In this topology, a neighborhood basis at \(\mPo\in \mD(\mY)^{\Gma\times\mX}\) is given by sets of the form
\[
\Lset{P\in \mD(\mY)^{\Gma\times\mX}:\,\, \sup_{(\bgma,\bx)\in K_{\Gma}\times K_{\mX}}\, \dHel(p_{\bgma,\bx}, \po_{\bgma,\bx}) < \eps}
\]
for some \(\eps > 0\), \(K_{\Gma}\subset\Gma\) compact, and \(K_{\mX}\subset\mX\) compact. Note these neighborhoods include sets of the form
\[
\Lset{P\in \mD(\mY)^{\Gma\times\mX}:\,\, \sup_{\bx\in K_{\mX}}\, \dHel(p_{\bgmai,\bx}, \po_{\bgmai,\bx}) < \eps_i,\,i\in\bset{n}}
\]
for \(\eps_1,\ldots,\eps_n > 0\) and \(\bgma_1,\ldots,\bgma_n \in \Gma\). The following result shows that any neighborhood of the image of \(P\in\mP(\Th)^{\mX}\) under the induced mixture on the product-Hellinger topology also contains, with positive probability, the image of a sample path of the DDP or its variants under the same induced mixture. We defer the proof of the following result to Appendix~\ref{thm:mixtures:support:hellingerWeakStrong:proof}.

\begin{theorem}\label{thm:mixtures:support:strongHellinger}
    Suppose that \(\muY\) is locally finite, that \(\psi\) is continuous and satisfies~\eqref{eq:mixture:likelihoodDecayConditionTheta} for any \((\by,\bgma)\in \mY\times\Gma\), and that the hypotheses of Theorem~7 in Part I hold. Then, for any \(\mPo\in \mP(\Th)^{\mX}\) the event
    \[
    \Lset{\omega\in\Omega:\,\,\sup_{(\bgma,\bx)\in K_{\Gma}\times K_{\mX}}\,\dHel(\qPo_{\bgma,\bx}, \qGw_{\bgma,\bx}) < \eps}
    \]
    has positive probability for any \(\eps > 0\), and compact \(K_{\Gma}\subset\Gma\) and \(K_{\mX}\subset\mX\).
\end{theorem}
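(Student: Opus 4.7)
The plan is to reduce the Hellinger estimate to a uniform $L^{1}(\muY)$ bound on densities, linearize it in $\bth$ by a truncation plus a Stone--Weierstrass approximation, and conclude via the compact-weak support guarantee of Theorem~\ref{thm:support:compact-weak}. The structure mirrors the proof of Theorem~\ref{thm:mixtures:support:weakHellinger} but carries every estimate uniformly over the compact $K_{\Gma}\times K_{\mX}$.

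First, by the elementary inequality $\dHel(p_{1},p_{2})^{2}\leq \tfrac12\,\|p_{1}-p_{2}\|_{L^{1}(\muY)}$, it suffices to make
\[
\sup_{(\bgma,\bx)\in K_{\Gma}\times K_{\mX}}\int_{\mY}|\qPo_{\bgma,\bx}(\by)-\qGw_{\bgma,\bx}(\by)|\,d\muY(\by)
\]
smaller than any prescribed tolerance with positive probability. Second, using continuity of $\psi$, the decay condition~\eqref{eq:mixture:likelihoodDecayConditionTheta}, and a finite covering of $K_{\Gma}$, I would construct a compact $K_{\Th}^{\star}\subset\Th$ so that $\psi(\by,\bgma,\bth)$ is as small as desired whenever $\bgma\in K_{\Gma}$, $\bth\notin K_{\Th}^{\star}$, and $\by$ lies in any prescribed compact subset of $\mY$. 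Since each $\psi(\cdot,\bgma,\bth)$ is a $\muY$-density, and since $(\bgma,\bth)\mapsto\int_{K}\psi\,d\muY$ is continuous on compacts by local finiteness of $\muY$ and continuity of $\psi$, a second compactness argument yields a compact $K_{\mY}^{\star}\subset\mY$ on which the mass of $\psi(\cdot,\bgma,\bth)$ is uniformly close to $1$ across $K_{\Gma}\times K_{\Th}^{\star}$, which handles the tails in $\by$.

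Third, on the compact product $K_{\mY}^{\star}\times K_{\Gma}\times K_{\Th}^{\star}$, Stone--Weierstrass provides a uniform approximation $\psi(\by,\bgma,\bth)\approx\sum_{k=1}^{m}f_{k}(\by,\bgma)\,g_{k}(\bth)$ with $g_{k}$ continuous on $K_{\Th}^{\star}$, which we extend to elements of $\Cb(\Th)$ by Tietze. Substituting this approximation into the inner integral and interchanging integrals by Fubini, the target supremum is dominated, up to the arbitrarily small truncation errors produced in Step~2, by
\[
C\sum_{k=1}^{m}\sup_{\bx\in K_{\mX}}\Big|\int_{\Th}g_{k}(\bth)\,d\Gw_{\bx}(\bth)-\int_{\Th}g_{k}(\bth)\,d\mPo_{\bx}(\bth)\Big|,
\]
with $C$ depending only on $K_{\mY}^{\star}$, the $f_{k}$'s, and $\muY$. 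Applying Theorem~\ref{thm:support:compact-weak} to the finite family $g_{1},\dots,g_{m}\in\Cb(\Th)$ and the compact $K_{\mX}$ shows that each supremum in the last display can be made simultaneously arbitrarily small with positive probability, yielding the claim.

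The principal obstacle is Step~2: because the decay hypothesis~\eqref{eq:mixture:likelihoodDecayConditionTheta} is only pointwise, producing a single $K_{\Th}^{\star}$ that works uniformly over $K_{\Gma}$ requires a delicate covering argument, and the tightness in $\by$ must be extracted from the fact that each $\psi(\cdot,\bgma,\bth)$ is a probability density, together with continuity, rather than from an explicit dominating function.
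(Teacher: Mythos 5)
Your overall architecture is sound and matches the paper's at the decisive final step: both arguments reduce the claim to controlling \(\bigl|\int_\Th g_k\,d\Gw_{\bx}(\bth)-\int_\Th g_k\,d\mPo_{\bx}(\bth)\bigr|\) uniformly over \(\bx\in K_{\mX}\) for finitely many \(g_k\in\Cb(\Th)\), and then invoke the compact-weak support theorem (Theorem~\ref{thm:support:compact-weak}). The gap is in your reduction. By passing to the \(L^1\) bound \(\dHel^2\leq\tfrac12\nrm{p_1-p_2}_{L^1}\) you commit yourself to controlling the mass of the \emph{random} densities \(\qGw_{\bgma,\bx}\) outside \(K_{\mY}^\star\), and your Step~2 does not deliver this. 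You show that \(\int_{K_{\mY}^\star}\psi(\by,\bgma,\bth)\,d\muY(\by)\) is close to \(1\) for \(\bth\in K_\Th^\star\), but the resulting bound
\[
\int_{K_{\mY}^\star}\qGw(\by,\bgma,\bx)\,d\muY(\by)\;\geq\;(1-\eps)\,\Gw_{\bx}(K_\Th^\star)
\]
is useless unless \(\Gw_{\bx}(K_\Th^\star)\) is itself close to \(1\) uniformly over \(\bx\in K_{\mX}\) on the good event, and nothing in your construction forces the random measures to concentrate on \(K_\Th^\star\). The same uncontrolled quantity \(\Gw_{\bx}(\Th\setminus K_\Th^\star)\) reappears in Step~3: the Stone--Weierstrass approximation is valid only on \(K_{\mY}^\star\times K_\Gma\times K_\Th^\star\), so replacing \(\int_\Th\psi\,d\Gw_{\bx}\) by \(\sum_k f_k\int_\Th g_k\,d\Gw_{\bx}\) with arbitrary Tietze extensions \(g_k\) incurs an error of order \(\sum_k\nrm{f_k}_{\LInf}\nrm{g_k}_{\LInf}\,\Gw_{\bx}(\Th\setminus K_\Th^\star)\), since the individual extensions need not be small off \(K_\Th^\star\) even though their weighted sum approximates the small function \(\psi\) there.

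This is precisely the difficulty the paper's proof is engineered to avoid: it works with the Hellinger affinity \(1-\int_{\mY}\sqrt{\qPo\,\qGw}\,d\muY\) rather than the \(L^1\) distance, and since \(1-\int_{\mY}\sqrt{\qPo\,\qGw}\,d\muY\leq 1-\int_{K_{\mY}}\sqrt{\qPo\,\qGw}\,d\muY\), only the tail mass of the \emph{fixed} measures \(Q^{\mPo}_{\bgma,\bx}\) outside \(K_{\mY}\) ever needs to be controlled (Lemma~\ref{lem:mixtures:uniformContinuityAux}); the random side enters only through differences at finitely many points \((\byk,\bgmal)\) weighted by \(\sqrt{\qPo}\). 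Your route can be repaired, for instance by adjoining to the family fed into Theorem~\ref{thm:support:compact-weak} the bounded continuous functions \(\bth\mapsto\int_{K_{\mY}^\star}\psi(\by,\bgmal,\bth)\,d\muY(\by)\), whose integrals against \(\mPo_{\bx}\) are near \(1\), thereby forcing uniform tightness of \(\qGw_{\bgmal,\bx}\) on the good event; but as written both the tail of \(\qGw\) in \(\by\) and the off-compact error in \(\bth\) are unaccounted for.
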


\subsubsection{The \(\LInf\) distance}

We define the {\em product-\(\LInf\)} topology on \(\mD(\mY)^{\Gma\times\mX}\) as follows. In this topology, a neighborhood basis at \(\mPo\in \mD(\mY)^{\Gma\times\mX}\) is given by sets of the form
\[
\Lset{P\in \mD(\mY)^{\Gma\times\mX}:\,\, \dLInf(p_{\bgmai,\bxi}, \po_{\bgmai,\bxi}) < \eps_i}
\]
for some \(\eps_1,\ldots, \eps_n > 0\), \(\bgma_1,\ldots,\bgma_n\in\Gma\) and \(\bx_1,\ldots,\bx_n\in \mX\). Similarly to the Hellinger distance, any neighborhood of the image of \(P\in\mP(\Th)^{\mX}\) under the induced mixture on the product-\(\LInf\) topology contains, with positive probability, the image of a sample path of a DDP or its variants under the same induced mixture. However, we require the additional hypothesis of compactness of \(\mY\). We defer the proof of the following result to Appendix~\ref{thm:mixtures:support:LInfWeakStrong:proof}.

\begin{theorem}\label{thm:mixtures:support:weakLInf}
    Suppose that \(\mY\) is compact, that \(\psi\) is continuous and satisfies~\eqref{eq:mixture:likelihoodDecayConditionTheta} for any \((\by,\bgma)\in \mY\times\Gma\), and that the hypotheses of Theorem~5 in Part I hold. Then, for any \(\mPo\in \mP(\Th)^{\mX}\) the event
    \[
    \Lset{\omega\in\Omega:\,\, \dLInf(\qPo_{\bgmai,\bxi}, \qGw_{\bgmai,\bxi}) < \eps_i}
    \]
    has positive probability for any \(\eps_1,\ldots, \eps_n > 0\), \(\bgma_1,\ldots,\bgma_n\in\Gma\) and \(\bx_1,\ldots,\bx_n\in \mX\).
\end{theorem}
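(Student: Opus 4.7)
The plan is to reduce the problem, uniformly in $\by\in\mY$, to the product-strong support statement (Theorem~\ref{thm:support:product-strong}), using compactness of $\mY$ to replace continuous approximations by finite-dimensional ones.  Fix the data $\eps_1,\dots,\eps_n>0$, $\bgma_1,\dots,\bgma_n\in\Gma$ and $\bx_1,\dots,\bx_n\in\mX$.  For each $i\in\bset{n}$, since $\mY\times\set{\bgma_i}$ is compact, the pointwise decay condition~\eqref{eq:mixture:likelihoodDecayConditionTheta} and a finite-subcover argument produce a compact set $K_{\Th,i}\subset\Th$ such that $\psi(\by,\bgma_i,\bth)<\eps_i/4$ for every $\by\in\mY$ and $\bth\in K_{\Th,i}^c$.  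Consequently, for any $Q\in\mP(\Th)$,
\[
\sup_{\by\in\mY}\left|\int_{K_{\Th,i}^c}\psi(\by,\bgma_i,\bth)\,dQ(\bth)\right|<\eps_i/4.
\]
This handles the ``tail'' contribution uniformly in $\by$, independently of the probability measure.

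Next I would exploit the continuity of $\psi$ on the compact set $\mY\times\set{\bgma_i}\times K_{\Th,i}$.  By uniform continuity, partition $K_{\Th,i}$ into finitely many Borel pieces $B_{i,1},\dots,B_{i,N_i}$ of small diameter and pick representatives $\bth_{i,k}^*\in B_{i,k}$ so that $|\psi(\by,\bgma_i,\bth)-\psi(\by,\bgma_i,\bth_{i,k}^*)|<\eps_i/4$ uniformly for $\by\in\mY$ and $\bth\in B_{i,k}$.  Then, for any $Q\in\mP(\Th)$,
\[
\sup_{\by\in\mY}\left|\int_{K_{\Th,i}}\psi(\by,\bgma_i,\bth)\,dQ(\bth)-\sum_{k=1}^{N_i}\psi(\by,\bgma_i,\bth_{i,k}^*)\,Q(B_{i,k})\right|<\eps_i/4.
\]
Applied to both $\mPo_{\bx_i}$ and $\Gxw_{i}$, this reduces the task to comparing the two finite sums uniformly in $\by$.

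At this point the remaining obstacle is precisely what Theorem~\ref{thm:support:product-strong} resolves.  Setting $M_i:=\sup\set{\psi(\by,\bgma_i,\bth):(\by,\bth)\in\mY\times K_{\Th,i}}<\infty$, I would invoke the product-strong support result for the finite family of tests $\set{(\bx_i,B_{i,k}):\,i\in\bset{n},\,k\in\bset{N_i}}$ to obtain a positive-probability event on which $|\Gxw_i(B_{i,k})-\mPo_{\bx_i}(B_{i,k})|<\eps_i/(4N_iM_i)$ for all $i,k$.  On that event,
\[
\sup_{\by\in\mY}\left|\sum_{k=1}^{N_i}\psi(\by,\bgma_i,\bth_{i,k}^*)\bigl(\mPo_{\bx_i}(B_{i,k})-\Gxw_i(B_{i,k})\bigr)\right|<M_i\cdot N_i\cdot\frac{\eps_i}{4N_iM_i}=\eps_i/4,
\]
and combining the three $\eps_i/4$ estimates with a triangle inequality yields $\dLInf(\qPo_{\bgmai,\bxi},\qGw_{\bgmai,\bxi})<\eps_i$ simultaneously for all $i$.

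The main difficulty, and the reason compactness of $\mY$ is needed here but not in the Hellinger statements, is twofold: extracting a single compact $K_{\Th,i}$ that controls the tail of $\psi$ uniformly over $\by\in\mY$, and using uniform continuity of $\psi$ on a compact product set to approximate the integrand by a simple function with error uniform in $\by$.  Both steps fail for non-compact $\mY$ unless one imposes a global decay estimate on $\psi$ (for instance, an integrable envelope in $\by$), which is not assumed here.  All other ingredients—the finite-dimensional support statement and the standard triangle-inequality accounting—are immediate consequences of Theorem~\ref{thm:support:product-strong}.
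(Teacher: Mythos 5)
Your reduction is clean up to the last step, but the final invocation of Theorem~\ref{thm:support:product-strong} is where the argument breaks. That theorem does not give positive probability to the events $\set{|\Gw_{\bx_i}(B_{i,k})-\mPo_{\bx_i}(B_{i,k})|<\eta}$ for an arbitrary $\mPo\in\mP(\Th)^{\mX}$: its conclusion is restricted to targets in $\mP(\Th)^{\mX}|_{\GX}$, i.e.\ to $\mPo$ with $\mPo_{\bx}\ll G^0_{\bx}$ for every $\bx$, whereas the statement you are proving allows any $\mPo\in\mP(\Th)^{\mX}$ and only assumes the hypotheses of the product-\emph{weak} support theorem (Theorem~\ref{thm:support:product-weak}). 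The restriction is not cosmetic: once you discretize in $\bth$, your test functions become indicators $\ind_{B_{i,k}}$ of Borel sets, and matching $\Gw_{\bx_i}(B_{i,k})$ to $\mPo_{\bx_i}(B_{i,k})$ forces the atoms $\bthw_{k,\bx_i}$ to land \emph{inside} $B_{i,k}$ — an event that can have probability zero, since the $B_{i,k}$ need not contain any open set charged by $G^0_{\bx_i}$ while $\mPo_{\bx_i}$ may still charge them. Full support of the atom laws only yields positive probability of landing in open sets, which is exactly why the paper never leaves the class of continuous test functions.

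The paper instead discretizes on the other side: by Lemma~\ref{lem:mixtures:uniformContinuityAux}, the map $(\by,\bgma)\mapsto\int_{\Th}\psi(\by,\bgma,\bth)\,dP_{\bx}(\bth)$ is equicontinuous on the compact set $\mY\times K_{\Gma}$ uniformly over all $P$ and $\bx$, so a finite net $\set{(\by_k,\bgma_k)}$ reduces the $\LInf$ bound to finitely many \emph{continuous bounded} test functions $\psi(\by_k,\bgma_k,\cdot)$, to which the product-weak support theorem applies for arbitrary $\mPo$. Your scheme could be repaired in the same spirit — replace the requirement $\bthw_{k,\bx_i}\in B_{i,k}$ by membership in an open neighborhood of the representative $\bth^{*}_{i,k}$ on which $\psi(\by,\bgma_i,\cdot)$ varies by less than $\eps_i/4$ uniformly in $\by$, i.e.\ rerun the product-weak argument keeping track of the uniformity in $\by$ — but as written the appeal to the product-strong theorem does not cover the stated generality. (A minor point: your triangle inequality actually involves five error terms of size $\eps_i/4$ — two tails, two simple-function approximations, and the finite sum — so the constants need tightening.)
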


The stronger {\em compact-\(\LInf\)} topology on \(\mD(\mY)^{\Gma\times\mX}\) can be defined similarly as for the Hellinger distance. In this topology, a neighborhood basis at \(\mPo\in\mD(\mY)^{\Gma\times\mX}\) is given by sets of the form
\[
\Lset{P\in \mD(\mY)^{\Gma\times\mX}:\,\, \sup_{(\bgma,\bx)\in K_{\Gma}\times K_{\mX}}\, \dLInf(p_{\bgmai,\bxi}, \po_{\bgma,\bx}) < \eps}
\]
for some \(\eps > 0\), \(K_{\Gma}\subset\Gma\) compact, and \(K_{\mX}\subset\mX\) compact. Note these neighborhoods include sets of the form
\[
\Lset{P\in \mD(\mY)^{\Gma\times\mX}:\,\, \sup_{\bx\in K_{\mX}}\, \dLInf(p_{\bgmai,\bx}, \po_{\bgmai,\bx}) < \eps_i,\,i\in\bset{n}}
\]
for \(\eps_1,\ldots,\eps_n > 0\) and \(\bgma_1,\ldots,\bgma_n \in \Gma\). The following result shows that any neighborhood of the image of \(P\in\mP(\Th)^{\mX}\) under the induced mixture on the product-\(\LInf\) topology also contains, with positive probability, the image of a sample path of a DDP or its variants under the same induced mixture. In this case, we also assume \(\mY\) is compact. We defer the proof of the following result to Appendix~\ref{thm:mixtures:support:LInfWeakStrong:proof}.

\begin{theorem}\label{thm:mixtures:support:strongLInf}
    Suppose that \(\mY\) is compact, that \(\psi\) is continuous and satisfies~\eqref{eq:mixture:likelihoodDecayConditionTheta} for any \((\by,\bgma)\in \mY\times\Gma\), and that the hypotheses of Theorem~7 in Part I hold. Then, for any \(\mPo\in \mP(\Th)^{\mX}\) the event
    \[
    \Lset{\omega\in\Omega:\,\,\sup_{(\bgma,\bx)\in K_{\Gma}\times K_{\mX}}\,\dLInf(\qPo_{\bgma,\bx}, \qGw_{\bgma,\bx}) < \eps}
    \]
    has positive probability for any \(K_{\Gma}\subset\Gma\) compact, \(K_{\mX}\subset\mX\) compact, and \(\eps > 0\).
\end{theorem}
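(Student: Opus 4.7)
The plan is to mirror the structure of Theorems~\ref{thm:mixtures:support:strongHellinger} and~\ref{thm:mixtures:support:weakLInf}: I would compactify the tail of \(\psi\), discretize the \((\by,\bgma)\) coordinate by uniform continuity, and finally reduce the target event to a finite intersection of basic neighborhoods of \(\mPo\) in the compact-strong topology on \(\mP(\Th)^{\mX}\), for which Theorem~\ref{thm:support:compact-strong} supplies positive probability.

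First, I would upgrade the pointwise decay condition~\eqref{eq:mixture:likelihoodDecayConditionTheta} to a uniform bound on the compact set \(\mY\times K_\Gma\). For any \(\eps_1>0\) and each \((\byo,\bgmao)\in\mY\times K_\Gma\), the hypothesis furnishes a neighborhood \(U_{\byo}\times U_{\bgmao}\) and a compact \(K_\Th^{\byo,\bgmao}\subset\Th\) on which \(\psi<\eps_1\) outside \(K_\Th^{\byo,\bgmao}\). A finite subcover of \(\mY\times K_\Gma\) together with the union of the associated compacta yields a single compact \(K_\Th\subset\Th\) with \(\psi(\by,\bgma,\bth)<\eps_1\) for all \((\by,\bgma)\in\mY\times K_\Gma\) and \(\bth\notin K_\Th\). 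Since \(\psi\) is continuous on the compact \(\mY\times K_\Gma\times K_\Th\), it is uniformly continuous there, so I would choose a finite open cover \(U_1,\ldots,U_N\) of \(\mY\times K_\Gma\) with centers \((\by_j,\bgma_j)\) such that \(|\psi(\by,\bgma,\bth)-\psi(\by_j,\bgma_j,\bth)|<\eps_2\) whenever \((\by,\bgma)\in U_j\) and \(\bth\in K_\Th\).

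Combining these two estimates by splitting the integral on \(K_\Th\) and \(K_\Th^{c}\), for any probability measure \(Q\) on \(\Th\) and \((\by,\bgma)\in U_j\),
\[
\left|\int_{\Th}\psi(\by,\bgma,\bth)\,dQ(\bth)-\int_{\Th}\psi(\by_j,\bgma_j,\bth)\,dQ(\bth)\right|\leq \eps_2+2\eps_1.
\]
Applying this to \(Q=\mPo_{\bx}\) and \(Q=\Gxw\) and invoking the triangle inequality gives, for \((\by,\bgma)\in U_j\),
\[
|\qPo_{\bgma,\bx}(\by)-\qGw_{\bgma,\bx}(\by)|\leq 2(\eps_2+2\eps_1)+\left|\int_{\Th}\psi(\by_j,\bgma_j,\bth)\,d(\mPo_{\bx}-\Gxw)(\bth)\right|.
\]
Choosing \(\eps_1,\eps_2\) with \(2(\eps_2+2\eps_1)<\eps/2\), the target event then contains the finite intersection
\[
\bigcap_{j=1}^{N}\Lset{\omega\in\Omega:\,\sup_{\bx\in K_\mX}\left|\int_{\Th}\psi(\by_j,\bgma_j,\bth)\,d(\mPo_{\bx}-\Gxw)(\bth)\right|<\eps/2}.
\]
Since each \(\psi(\by_j,\bgma_j,\cdot)\) is continuous and bounded on \(\Th\), hence in \(\LInf(\Th)\), this intersection is a basic neighborhood of \(\mPo\) in the compact-strong topology on \(\mP(\Th)^{\mX}\), so it has positive probability under the hypotheses of Theorem~\ref{thm:support:compact-strong}.

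The main obstacle is upgrading the pointwise decay condition~\eqref{eq:mixture:likelihoodDecayConditionTheta} to a uniform tail bound on \(\mY\times K_\Gma\); this is where the compactness of \(\mY\) plays a decisive role, since it permits the finite subcover that collapses the pointwise compacts \(K_\Th^{\byo,\bgmao}\) into a single \(K_\Th\). Without the compactness of \(\mY\), the \(\sup_{\by\in\mY}\) inside \(\dLInf\) could not be controlled uniformly, which is precisely why the theorem assumes \(\mY\) is compact, in contrast with the Hellinger analog in Theorem~\ref{thm:mixtures:support:strongHellinger}, where the local finiteness of \(\muY\) suffices.
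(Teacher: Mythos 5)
Your reduction is essentially the paper's: you compactify the tail of \(\psi\) over \(\mY\times K_\Gma\) using the decay condition and the compactness of \(\mY\), discretize \((\by,\bgma)\) by uniform continuity on \(\mY\times K_\Gma\times K_\Th\), and split each integral over \(K_\Th\) and \(K_\Th^c\) to bound \(|\qPo_{\bgma,\bx}(\by)-\qGw_{\bgma,\bx}(\by)|\) by a discretization error plus finitely many terms of the form \(\sup_{\bx\in K_\mX}|\int_\Th\psi(\by_j,\bgma_j,\bth)\,d(\mPo_{\bx}-\Gxw)(\bth)|\). The paper packages the first two steps into Lemma~\ref{lem:mixtures:uniformContinuityAux}, which you re-derive inline with the same covering argument; that part is fine, and your remark about where compactness of \(\mY\) is indispensable is exactly right.

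The genuine problem is the final invocation. You classify \(\psi(\by_j,\bgma_j,\cdot)\) as an element of \(\LInf(\Th)\) and appeal to Theorem~\ref{thm:support:compact-strong}. But that theorem only asserts that the support contains \(C_S(\mX,\mP(\Th))\cap\mP(\Th)^{\mX}|_{\GX}\), i.e.\ it gives positive probability to neighborhoods of those \(P^0\) with \(P^0_{\bx}\ll G^0_{\bx}\) (or \(\ll G^0\)) for every \(\bx\); for an arbitrary \(\mPo\in\mP(\Th)^{\mX}\), as the statement requires, it yields nothing, and its hypotheses are not among those assumed in the theorem you are proving (which assumes the hypotheses of Theorem~7 in Part~I, the compact-\emph{weak} support theorem, Theorem~\ref{thm:support:compact-weak}). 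The fix is to observe that each test function \(\psi(\by_j,\bgma_j,\cdot)\) is not merely bounded and measurable but lies in \(\Cb(\Th)\): it is continuous by continuity of \(\psi\), and bounded because it is below \(\eps_1\) off the compact \(K_\Th\) and attains a finite maximum on \(K_\Th\). Hence your finite intersection is a basic neighborhood of \(\mPo\) in the compact-weak topology, and Theorem~\ref{thm:support:compact-weak} — applied after replacing \(\mPo\) by a weakly continuous surrogate where needed, as the paper does via Lemma~\ref{lem:support:approximation} — supplies the positive probability. With that substitution your argument closes and coincides with the paper's.
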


\subsubsection{The Kullback-Leibler divergence}

The KL-divergence defines a premetric on \(\mD(\mY)\) that induces a locally convex topology on \(\mD(\mY)\). This topology depends on which argument is used to define the neighborhood. Due to its connection to the consistency of Bayesian procedures, we consider the neighborhood basis at \(\mPo\in \mD(\mY)\) given by sets of the form
\[
\set{P\in \mD(\mY):\,\, \KL{\po}{p} < \eps}
\]
for \(\eps>0\). The {\em product-KL} topology on \(\mD(\mY)^{\Gma\times\mX}\) is defined as follows. A neighborhood basis for \(\mPo\in \mD(\mY)^{\Gma\times\mX}\) on the product-KL topology is given by sets of the form
\[
\Lset{P\in \mD(\mY)^{\Gma\times\mX}:\,\, \KL{\po_{\bgmai,\bxi}}{p_{\bgmai,\bxi}} < \eps_i}
\]
for some \(\eps_1,\ldots, \eps_n > 0\), \(\bgma_1,\ldots,\bgma_n\in\Gma\) and \(\bx_1,\ldots,\bx_n\in \mX\). In this case, we obtain a result similar to that obtained for the Hellinger distance. In this case, we also assume \(\mY\) is compact. We defer the proof of the following result to Appendix~\ref{thm:mixtures:support:KLWeakStrong:proof}.

\begin{theorem}\label{thm:mixtures:support:weakKL}
    Suppose that \(\mY\) is compact, and that \(\psi\) is continuous, strictly positive, and satisfies~\eqref{eq:mixture:likelihoodDecayConditionTheta} for any \((\by,\bgma)\in \mY\times\Gma\). Furthermore, suppose the hypotheses of Theorem~5 in Part I hold. Then, for any \(\mPo\in \mP(\Th)^{\mX}\) the event
    \[
    \Lset{\omega\in\Omega:\,\, \KL{\po_{\bgmai,\bxi}}{\qGw_{\bgmai,\bxi}} < \eps_i}
    \]
    has positive probability for any \(\eps_1,\ldots, \eps_n > 0\), \(\bgma_1,\ldots,\bgma_n\in\Gma\) and \(\bx_1,\ldots,\bx_n\in \mX\).
\end{theorem}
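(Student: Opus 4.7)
The strategy is to reduce the KL approximation to the $\LInf$ approximation already established in Theorem~\ref{thm:mixtures:support:weakLInf}. The key observation is that on a compact $\mY$, with a strictly positive and continuous $\psi$, each density $\po_{\bgmai,\bxi}$ is uniformly bounded below by a strictly positive constant, and a sufficiently small $\LInf$ perturbation then automatically produces a small KL divergence.

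First, I would verify that for each $i\in \bset{n}$ the density $\po_{\bgmai,\bxi}(\by) = \int_{\Th}\psi(\by,\bgmai,\bth)\,d\mPo_{\bxi}(\bth)$ is continuous on $\mY$. Given a sequence $\by_k\to \by_0$ in $\mY$, continuity of $\psi$ gives pointwise convergence of the integrands in $\bth$, while the decay condition~\eqref{eq:mixture:likelihoodDecayConditionTheta} at $(\by_0,\bgmai)$, combined with the continuity of $\psi$ on a compact set of the form $(\set{\by_k}_k\cup\set{\by_0})\times\set{\bgmai}\times K_{\Th}$, yields a uniform bound $\psi(\by_k,\bgmai,\bth)\leq M$ for all large $k$ and every $\bth\in\Th$. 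Dominated convergence then gives continuity of $\po_{\bgmai,\bxi}$. Since $\psi>0$ and $\mPo_{\bxi}$ is a probability measure, $\po_{\bgmai,\bxi}>0$ pointwise on $\mY$; combined with the compactness of $\mY$, the infimum $m_i := \inf_{\by\in\mY}\po_{\bgmai,\bxi}(\by)$ is strictly positive.

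Next, I would establish the elementary bound: if $p,q\in \mD(\mY)$ satisfy $p \geq m > 0$ on $\mY$ and $\dLInf(p,q) \leq \delta$ with $\delta \leq m/4$, then $q \geq m - \delta \geq m/2$ on $\mY$, so $|(p-q)/q| \leq 2\delta/m \leq 1/2$ and hence $|\log(p/q)| \leq 2\cdot 2\delta/m = 4\delta/m$. Integrating against $p\,d\muY$ and using $\int_{\mY} p\,d\muY = 1$ yields
\[
\KL{p}{q} \;\leq\; \frac{4\delta}{m}.
\]
For each $i$, setting $\delta_i := m_i \min(\eps_i, 1)/4$ ensures that $\dLInf(\po_{\bgmai,\bxi}, \qGw_{\bgmai,\bxi}) < \delta_i$ implies $\KL{\po_{\bgmai,\bxi}}{\qGw_{\bgmai,\bxi}} < \eps_i$.

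Finally, I would apply Theorem~\ref{thm:mixtures:support:weakLInf} with these $\delta_i$ to conclude that the event $\Lset{\omega\in\Omega:\, \dLInf(\po_{\bgmai,\bxi},\qGw_{\bgmai,\bxi}) < \delta_i,\, i\in \bset{n}}$ has positive probability; by the previous step this event is contained in the KL event of interest. The main obstacle is establishing the strictly positive uniform lower bound $m_i > 0$ on $\po_{\bgmai,\bxi}$ over $\mY$: without such a bound, $\log(\po/\qGw)$ need not be integrable and the KL divergence could be arbitrarily large under an arbitrarily small $\LInf$ perturbation. This is precisely why both the compactness of $\mY$ and the strict positivity of $\psi$ appear here as essential hypotheses, and what distinguishes the KL setting from the Hellinger and $\LInf$ ones.
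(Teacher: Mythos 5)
Your proof is correct and follows essentially the same route as the paper: both reduce the KL bound to the $\LInf$ support result (Theorem~\ref{thm:mixtures:support:weakLInf}) by using strict positivity of \(\psi\) and compactness of \(\mY\) to obtain a uniform positive lower bound on \(\po_{\bgmai,\bxi}\), and then convert the resulting multiplicative closeness into a bound on \(|\log(\po/\qGw)|\) via an elementary logarithm inequality before integrating against \(\po\). If anything, your explicit verification that \(\po_{\bgmai,\bxi}\) is continuous (hence attains a positive minimum on \(\mY\)) and your use of that minimum as the scale for the \(\LInf\) tolerance is slightly cleaner than the paper's version, which states the constant as an upper bound \(c_{\max}\) with \(q^{\mPo}\leq c_{\max}\) but then uses it as if it were a lower bound when dividing.
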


The stronger {\em compact-KL} topology on \(\mD(\mY)^{\Gma\times\mX}\) can be defined similarly as for the Hellinger and \(\LInf\) distances. In this topology, a  neighborhood basis at \(\mPo\in \mD(\mY)^{\Gma\times\mX}\) is given by
\[
\Lset{P\in \mD(\mY)^{\Gma\times\mX}:\,\, \sup_{(\bgma,\bx)\in K_{\Gma}\times K_{\mX}}\, \KL{\po_{\bgma,\bx}}{p_{\bgma,\bx}} < \eps}
\]
for some \(\eps > 0\), \(K_{\Gma}\subset\Gma\) compact, and \(K_{\mX}\subset\mX\) compact. Note these neighborhoods include sets of the form
\[
\Lset{P\in \mD(\mY)^{\Gma\times\mX}:\,\, \sup_{\bx\in K_{\mX}}\, \KL{\po_{\bgmai,\bx}}{p_{\bgmai,\bx}} < \eps_i,\,i\in\bset{n}}
\]
for \(\eps_1,\ldots,\eps_n > 0\) and \(\bgma_1,\ldots,\bgma_n \in \Gma\). The following result shows that any neighborhood of the image of \(P\in\mP(\Th)^{\mX}\) under the induced mixture on the product-\(\LInf\) topology also contains, with positive probability, the image of a sample path of a DDP or its variants under the same induced mixture. We defer the proof of the following result to Appendix~\ref{thm:mixtures:support:KLWeakStrong:proof}.

\begin{theorem}\label{thm:mixtures:support:strongKL}
    Suppose that \(\mY\) is compact, and that \(\psi\) is continuous, strictly positive, and satisfies~\eqref{eq:mixture:likelihoodDecayConditionTheta} for any \((\by,\bgma)\in \mY\times\Gma\). Furthermore, suppose the hypotheses of Theorem~7 in Part I hold. Then, for any \(\mPo\in \mP(\Th)^{\mX}\) the event
    \[
    \Lset{\omega\in\Omega:\,\, 
        \sup_{(\bgma,\bx)\in K_{\Gma}\times K_{\mX}}\,\KL{\po_{\bgma,\bx}}{\qGw_{\bgma,\bx}} < \eps}
    \]
    has positive probability for any \(\eps > 0\).
\end{theorem}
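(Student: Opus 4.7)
The plan is to reduce the compact-KL bound to a compact-$\LInf$ bound and then invoke Theorem~\ref{thm:mixtures:support:strongLInf}. The key analytic ingredient is the chain $\KL{p_1}{p_2} \leq \chi^2(p_1,p_2) \leq 2\muY(\mY)\dLInf(p_1,p_2)^2/c$, valid whenever $p_2 \geq c/2$ $\muY$-a.e.\ and $\muY(\mY)<\infty$; the latter holds here because $\mY$ is compact and $\muY$ is locally finite.

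First I would show that $\rho^{\mPo}$ is uniformly bounded above, and below by a positive constant, on $\mY\times K_\Gma\times K_\mX$. For the upper bound, the decay condition \eqref{eq:mixture:likelihoodDecayConditionTheta} together with the continuity of $\psi$ and the compactness of $\mY\times K_\Gma$ yields, via a finite-cover argument, a compact $K_\Th\subset\Th$ and a constant $M_\psi$ such that $\psi\leq M_\psi$ on $\mY\times K_\Gma\times\Th$, so $\rho^{\mPo}\leq M_\psi$. For the lower bound, strict positivity and continuity of $\psi$ give $\psi\geq c_0>0$ on $\mY\times K_\Gma\times K'_\Th$ for any compact $K'_\Th\subset\Th$; combined with uniform tightness of $\{\mPo_\bx:\bx\in K_\mX\}$ (an implicit regularity requirement on $\mPo$, satisfied e.g.\ when $\mPo$ is weakly continuous, via Prokhorov's theorem applied to the weakly compact image of $K_\mX$ in $\mP(\Th)$), this lets me choose $K'_\Th$ with $\mPo_\bx(K'_\Th)\geq 1/2$ uniformly in $\bx\in K_\mX$, yielding $\rho^{\mPo}\geq c:=c_0/2>0$.

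Given $\eps>0$, I would then set $\delta:=\min\bigl(c/2,\,\sqrt{c\eps/(2\muY(\mY))}\bigr)$ and invoke Theorem~\ref{thm:mixtures:support:strongLInf}, whose hypotheses are subsumed by the present ones, to conclude that the event
\[
E_\delta := \Lset{\omega\in\Omega:\, \sup_{(\bgma,\bx)\in K_\Gma\times K_\mX} \dLInf(\qPo_{\bgma,\bx},\, \qGw_{\bgma,\bx}) < \delta}
\]
has positive probability. On $E_\delta$ the triangle inequality gives $\qGw_{\bgma,\bx}(\by)\geq c-\delta\geq c/2$ uniformly on $\mY\times K_\Gma\times K_\mX$, so the inequality chain from the first paragraph produces
\[
\sup_{(\bgma,\bx)\in K_\Gma\times K_\mX}\, \KL{\po_{\bgma,\bx}}{\qGw_{\bgma,\bx}} \leq \frac{2\muY(\mY)\delta^2}{c} \leq \eps,
\]
which is the desired conclusion.

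The main obstacle is the uniform lower bound on $\rho^{\mPo}$: it relies on tightness of $\{\mPo_\bx:\bx\in K_\mX\}$, which is not automatic for arbitrary $\mPo\in\mP(\Th)^\mX$ and must be supplied, at least implicitly, since without it the compact-KL supremum is generically $+\infty$ and the stated event is empty. Once this bound is secured, the remainder of the argument is a routine comparison between divergences on a finite-measure base space combined with the already-established compact-$\LInf$ support property.
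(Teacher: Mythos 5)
Your proof is correct and follows essentially the same route as the paper's: both reduce the compact-KL statement to Theorem~\ref{thm:mixtures:support:strongLInf} by first securing a uniform positive lower bound on \(q^{\mPo}\) over the compact set \(\mY\times K_{\Gma}\times K_{\mX}\) and then converting the uniform \(\LInf\) bound into a KL bound via an elementary comparison (the paper bounds the log-ratio \(|\log(\po/q^{\Gw})|\) pointwise and integrates against \(\po\), while you pass through \(\chi^2\); both hinge on the same lower bound). Your caveat that this lower bound requires some regularity of \(\mPo\) is well taken --- the paper obtains it implicitly by working with a weakly continuous \(\mPo\) (inherited from the hypotheses of the Part I support theorem), for which \(q^{\mPo}\) is continuous and hence attains a strictly positive minimum on the compact set.
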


\subsection{Association structure}
\label{sec:mixtures:association}

As a consequence of Theorem~\ref{thm:mixtures:continuity:uniformAll}, when \(\psi\) is continuous and the DDP or any of its variants have a.s. weakly continuous sample paths, then the induced mixture will have a.s. strongly continuous sample paths. Therefore, the process \(\set{Q^{\Gw}_{\bgma,\bx}(B):\,\, \bgma\in \Gma,\, \bx\in\mX}\) for some fixed \(B\in \mB(\Th)\) has a.s. continuous paths. Following the arguments in Section~4.3 in Part I, for any \(d\in \N\) and \(f:[0,1]^d\to \R\) the process
\[
\set{f(Q^{G}_{\bgma_{1},\bx_1}(B),\ldots, Q^{G}_{\bgma_{d},\bx_d}(B)):\,\, (\bx_1,\ldots,\bx_d)\in \mX^d}
\]
has a.s. continuous sample paths. This also holds for its expectation
\begin{equation*}
    F_B(\bgma_1,\bx_1,\ldots, \bgma_d,\bx_d) := \ev(f(Q^{G}_{\bgma_{1},\bx_1}(B),\ldots, Q^{G}_{\bgma_{d},\bx_d}(B))).
\end{equation*}
In some applications, it is useful to consider the parameter \(\bgma\) as random. Let \((\bgma_1,\ldots,\bgma_d)\) be a random vector defined on \((\Omega,\mF,\mbP)\). Then the process
\[
\set{F_B(\bgma_1,\bx_1,\ldots, \bgma_d,\bx_d):\,\, (\bx_1,\ldots,\bx_d)\in \mX^d}
\]
has a.s. continuous sample paths. By the same arguments as before,
\[
\ev(F_B(\bgma_1,\bx_1,\ldots, \bgma_d,\bx_d)) = \int_{\Gma^d} F_B(\bgma_1,\bx_1,\ldots, \bgma_d,\bx_d)\,d\mPGam(\bgma_1,\ldots,\bgma_d),
\]
where \(\mPGam\) is the probability law of \((\bgma_1,\ldots,\bgma_d)\), is continuous.

\subsection{Posterior consistency}
\label{sec:mixtures:posteriorConsistency}

An important property of dependent Dirichlet processes mixture model is their posterior consistency. To study the asymptotic behavior of dependent Dirichlet processes mixture models, we consider a random sample of size \(n\) given by pairs \((\byi,\bxi)\) for  \(i\in\bset{n}\). As is common in regression settings, we assume that \(\bx_1,\ldots, \bx_n\) contain only exogenous covariates. The exogeneity assumption allows us to focus on the problem of conditional density estimation, regardless of the mechanism generating the predictors, that is, if they are randomly generated or fixed by design~\citep[see, e.g.,][]{barndorff;1973,barndorff;1978,florens;mouchart;rolin;1990}. Let \(\mPo\) be the true probability measure generating the predictors admitting a density \(p^0\) with respect to a measure \(\muX\). By the exogeneity assumption, the true probabi\-li\-ty model for the response variable and predictors takes the form \(m^0(\by,\bx)=p^0(\bx)q^0(\by \mid \bx)\). In this case, both \(p^0\) and \(\set{q^0_{\bx}:\, \bx \in \mX}\) are in free variation, where \(q^0_{\bx}(\by) := q^0(\by\mid\bx)\) denoting a conditional density defined on \(\mY\) for every \(\bx \in \mX\).

Let \(\mw(\by,\bx):=p^0(\bx)\gw_{\bx}(\by)\) be the random joint distribution for the response and predictors arising when \(\set{g_{\bx}: \bx \in \mX}\) is an induced mixture model induced by a \(\th\)DDP. Since the KL divergence between \(m^0\) and  the implied joint distribution \(\mw\) can be bounded as
\begin{align*}
    \KL{m^0}{\mw} &= \int_{\mX}\int_{\mY} m^0(\by,\bx)\log \left(\frac{m^0(\by,\bx)}{\mw(\by,\bx)} \right)d\muY(\by) d\muX(\bx) \\
    &= \int_{\mX} p^0(\bx) \int_{\mY} q^0(\by \mid \bx)\log \left(\frac{q^0(\by \mid \bx)}{\gw_{\bx}(\by)} \right)d\muY(\by) d\muX(\bx)\\
    &\leq \sup_{\bx \in \mX} \int_{\mY} q^0(\by \mid \bx)\log \left( \frac{q^0(\by \mid \bx)}{\gw_{\bx}(\by)}\right)d\muY(\by)
\end{align*} 
when \(\bx\) contains only continuous predictors, it follows that, under the assumptions of Theorem~\ref{thm:mixtures:support:strongKL} when \(\mX\) is compact, for every \(\eps >0\)
\begin{multline*}
    \mbP(\set{\omega\in\Omega:\,  \KL{m^0}{\mw} < \eps})\\
    \geq \mbP\left(\Lset{\omega\in\Omega:\,\sup_{\bx \in \mX} \int_{\mY} q^0(\by \mid \bx)\log \left( \frac{q^0(\by \mid \bx)}{\gw_{\bx}(\by)} \right)d\muY(\by) < \eps}\right) > 0,
\end{multline*}
Thus, by Schwartz's theorem~\citep{schwartz;1965}, it follows that the posterior distribution associated with the random joint distribution induced by any of the proposed models is weakly consistent, that is, the posterior measure of any weak neighborhood, of any joint distribution of the form \(m^0(\by,\bx)=p^0(\bx)q^0(\by \mid \bx)\), converges to 1 as the sample size goes to infinity. This result is summarized in the following theorem.

\begin{theorem}
    \label{thm:mixtures:consistency:weak}
    Suppose that assumptions of Theorem~\ref{thm:mixtures:support:strongKL} hold. Then, the posterior distribution \(\mw(\by,\bx)=p^0(\bx)f(\by|\bx,\Gw_{\bx})\) associated with the random joint distribution induced by the process \(\GX\) where \(p^0\) is the density generating the predictors and 
    \[
    f(\by\mid \bx,\Gw_{\bx})=\int_{\Th}\psi(\by,\bth)d\Gw_{\bx}(\bth),
    \]
    is weakly consistent, under independent sampling, at any joint distribution of the form \(m^0(\by,\bx)=p^0(\bx)f^{0}(\by\mid\bx)\) with
    \[
    f^{0}(\by\mid\bx)=\int_{\Th}\psi(\by,\bth)dP^{0}_{\bx}(\bth)
    \]
    and \(P^0\in C_S(\mX,\muY)\).
\end{theorem}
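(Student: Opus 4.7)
The plan is to invoke Schwartz's theorem~\citep{schwartz;1965}, which delivers weak posterior consistency at a joint distribution $m^0$ whenever $m^0$ lies in the Kullback-Leibler support of the prior induced on joint densities. The argument is essentially sketched in the paragraph preceding the statement; what remains is to formalize it by combining the exogeneity factorization with the compact-KL support result of Theorem~\ref{thm:mixtures:support:strongKL}.

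First I would use the chain rule for KL divergence together with the fact that $p^0$ is shared between $m^0$ and $\mw$, which yields
\begin{equation*}
\KL{m^0}{\mw}=\int_{\mX} p^0(\bx)\,\KL{f^0(\cdot\mid\bx)}{f(\cdot\mid\bx,\Gw_{\bx})}\, d\muX(\bx)\;\leq\;\sup_{\bx\in\mX}\,\KL{f^0(\cdot\mid\bx)}{f(\cdot\mid\bx,\Gw_{\bx})}.
\end{equation*}
Since $f^0(\cdot\mid\bx)$ is the density of the mixture at $\bx$ induced by $P^0$, and reading $P^0\in C_S(\mX,\muY)$ in the statement as the strong continuity $P^0\in C_S(\mX,\mP(\Th))$ demanded by Theorem~\ref{thm:mixtures:support:strongKL}, I would then apply that theorem with $K_{\mX}=\mX$ (assumed compact) and trivial $K_{\Gma}$. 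This yields that for every $\eps>0$ the event
\begin{equation*}
\Lset{\omega\in\Omega:\;\sup_{\bx\in\mX}\,\KL{f^0(\cdot\mid\bx)}{f(\cdot\mid\bx,\Gw_{\bx})}<\eps}
\end{equation*}
has positive prior probability, and combining with the bound above gives $\mbP(\set{\omega:\,\KL{m^0}{\mw}<\eps})>0$ for every $\eps>0$, which is exactly the KL support condition for $m^0$ under the prior on joint densities.

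Finally, since the predictors are exogenous and the sampling is i.i.d., Schwartz's theorem applied to the joint model with samples $(\byi,\bxi)\sim m^0$ delivers that the posterior mass of every weak neighborhood of $m^0$ converges to one almost surely. The main subtlety is interpretive rather than technical: the notation $P^0\in C_S(\mX,\muY)$ must be read as strong continuity into $\mP(\Th)$ so that Theorem~\ref{thm:mixtures:support:strongKL} is applicable, and compactness of $\mX$ is needed in order for $K_{\mX}=\mX$ to be admissible in the compact-KL support theorem. Once these are in place the argument is a mechanical assembly of results already established in the paper, and no genuine obstruction arises.
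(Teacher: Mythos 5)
Your proposal is correct and follows essentially the same route as the paper: the text immediately preceding the theorem bounds \(\KL{m^0}{\mw}\) by \(\sup_{\bx\in\mX}\KL{q^0(\cdot\mid\bx)}{\gw_{\bx}}\) using the shared factor \(p^0\), invokes Theorem~\ref{thm:mixtures:support:strongKL} (with \(\mX\) compact) to get positive prior probability of every KL neighborhood, and concludes by Schwartz's theorem. Your observations that \(P^0\in C_S(\mX,\muY)\) must be read as strong continuity into \(\mP(\Th)\) and that compactness of \(\mX\) is implicitly required match the paper's own caveats.
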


Although Theorem~\ref{thm:mixtures:consistency:weak} assumes that \(\bx\) contains only continuous predictors, a similar result can be obtained when \(\bx\) contains only predictors with finite support (e.g., categorical, ordinal and discrete predictors) or mixed continuous and predictors with finite support. The theorem can be also extended for more general mixture models induced by the DDP.

\begin{theorem}
    Suppose that assumptions in Theorem~\ref{thm:mixtures:support:strongKL} hold and that \(\mX\) is compact. Then, the posterior distribution associated with the random joint distribution
    \[
    m(\by,\bx,\bgmaw)=p^0(\bx)f(\by\mid\bx,\Gw_{\bx},\bgmaw)
    \]
    where \(p^0\) is the density generating the predictors and 
    \[
    f(\by\mid \bx, \Gw_{\bx},\bgmaw) = \int_{\Th}\psi(\by,\bth,\bgmaw)d\Gw_{\bx}(\bth)
    \]
    is weakly consistent, under independent sampling, at any joint distribution of the form
    \[
    m^{0}(\by,\bx,\bgmao)=p^0(\bx)f^{0}(\by\mid\bx,\bgmao)
    \]
    with
    \[
    f^{0}(\by\mid\bx,\bgmao)=\int_{\Th}\psi(\by,\bth,\bgmao)dP^{0}_{\bx}(\bth),
    \]
    where \(P^0\in C_S(\mX,\muY)\) and \(\bgma_0\in\Gma\).
\end{theorem}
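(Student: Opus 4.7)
The plan is to apply Schwartz's theorem for weak posterior consistency, which reduces the problem to showing that for every $\eps>0$, the prior assigns positive probability to a Kullback-Leibler neighborhood of $m^0$:
\[
\mbP(\set{\omega\in\Omega:\,\KL{m^0}{m(\cdot,\cdot,\bgmaw)} < \eps}) > 0.
\]
Following the chain of inequalities that precedes Theorem~\ref{thm:mixtures:consistency:weak}, the exogeneity of the predictors allows us to cancel \(p^0\) and bound the joint KL by the supremum of the conditional KL divergences:
\[
\KL{m^0}{m} \leq \sup_{\bx\in\mX} \KL{f^0(\cdot\mid\bx,\bgmao)}{f(\cdot\mid\bx,\Gw_{\bx},\bgmaw)}.
\]

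The second step is to split the conditional KL so that the existing Theorem~\ref{thm:mixtures:support:strongKL} can be invoked. Inserting \(f(\cdot\mid\bx,\Gw_{\bx},\bgmao)\) yields the decomposition
\[
\KL{f^0(\cdot\mid\bx,\bgmao)}{f(\cdot\mid\bx,\Gw_{\bx},\bgmaw)} = \KL{f^0(\cdot\mid\bx,\bgmao)}{f(\cdot\mid\bx,\Gw_{\bx},\bgmao)} + \int f^0(\by\mid\bx,\bgmao) \log\frac{f(\by\mid\bx,\Gw_{\bx},\bgmao)}{f(\by\mid\bx,\Gw_{\bx},\bgmaw)} d\muY(\by).
\]
For the first term, fix a compact neighborhood \(K_{\Gma}\subset \Gma\) of \(\bgmao\) and apply Theorem~\ref{thm:mixtures:support:strongKL} with \(K_{\mX}=\mX\) to obtain that, for any \(\eps'>0\), the event \(B := \{\sup_{(\bgma,\bx)\in K_{\Gma}\times\mX}\KL{\po_{\bgma,\bx}}{\qGw_{\bgma,\bx}} < \eps'\}\) has positive probability; specializing \(\bgma=\bgmao\) controls the first term uniformly in \(\bx\) by \(\eps'\).

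For the second term, I would use the continuity of \(\psi\) together with the decay condition~\eqref{eq:mixture:likelihoodDecayConditionTheta}. Given any \(\eta>0\), the decay condition produces a compact \(K_{\Th}\subset \Th\) on which uniform continuity of \(\psi\) over \(\mY\times K_{\Gma}\times K_{\Th}\) (a compact set since \(\mY\) is compact) yields a modulus of continuity, and off \(K_{\Th}\) the kernel is uniformly small. Integrating against \(\Gw_{\bx}\) gives a uniform bound
\[
\sup_{(\by,\bx)\in\mY\times\mX} |f(\by\mid\bx,\Gw_{\bx},\bgma) - f(\by\mid\bx,\Gw_{\bx},\bgmao)| < \eta
\]
for any \(\omega\) and any \(\bgma\) sufficiently close to \(\bgmao\). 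Combined with a uniform positive lower bound on \(f(\by\mid\bx,\Gw_{\bx},\bgmao)\) (which follows from strict positivity of \(\psi\), compactness of \(\mY\times\mX\), and the continuity granted by Theorem~\ref{thm:mixtures:continuity:uniformAll}), this forces the log-ratio to be uniformly small, whence the second term is bounded by \(\eps'\) for \(\bgmaw\in U\), where \(U\subset K_{\Gma}\) is a small enough neighborhood of \(\bgmao\). Finally, assuming the prior on \(\bgmaw\) is independent of \(\GX\) and puts positive mass on every open neighborhood of \(\bgmao\), the event \(A:=\{\bgmaw\in U\}\) has positive prior probability, so \(\mbP(A\cap B) > 0\); on \(A\cap B\) both terms are bounded by \(\eps'\), and choosing \(\eps'=\eps/2\) concludes the argument.

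The main obstacle is the second term: it requires that the random densities \(f(\by\mid\bx,\Gw_{\bx},\bgmao)\) be bounded away from zero uniformly in \((\by,\bx)\) on an event of positive probability, and that the modulus of continuity of \(\psi\) in \(\bgma\) transfer into a uniform modulus in \(\omega\). Both of these follow from the decay condition and the compactness hypotheses, but require careful handling so that the lower bound is preserved on the same event where the first term is small. Once this uniform control is in place, the rest is a straightforward invocation of Schwartz's theorem as in the proof of Theorem~\ref{thm:mixtures:consistency:weak}.
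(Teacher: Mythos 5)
The paper offers no written proof of this statement; it is presented as a direct extension of Theorem~\ref{thm:mixtures:consistency:weak}, whose own justification is the KL chain of inequalities and the appeal to Schwartz's theorem in the preceding discussion. Your proposal is a correct and natural completion of that implicit argument, and your decomposition of the conditional KL through the intermediate density \(f(\cdot\mid\bx,\Gw_{\bx},\bgmao)\) is algebraically valid and isolates exactly the two things that need controlling: the event of Theorem~\ref{thm:mixtures:support:strongKL} at \(\bgma=\bgmao\), and the modulus of continuity of the random density in \(\bgma\).

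Two points deserve sharpening. First, the uniform positive lower bound on \(f(\by\mid\bx,\Gw_{\bx},\bgmao)\) cannot be obtained from Theorem~\ref{thm:mixtures:continuity:uniformAll} plus compactness alone: that route yields an infimum \(c^{\omega}>0\) that depends on \(\omega\), and then the neighborhood \(U\) of \(\bgmao\) would have to depend on \(\omega\) as well, breaking the argument \(\mbP(A\cap B)>0\). The fix is the one the paper uses inside the proof of Theorem~\ref{thm:mixtures:support:strongKL}: take \(B\) to be the \(\LInf\) event \(\sup_{(\by,\bgma,\bx)}|\qGw(\by,\bgma,\bx)-q^{P^0}(\by,\bgma,\bx)|<\eps' c\) of Theorem~\ref{thm:mixtures:support:strongLInf}; since \(q^{P^0}\) is deterministic, continuous, strictly positive, and hence bounded below on the compact \(\mY\times K_{\Gma}\times\mX\), on \(B\) the random density inherits a \emph{deterministic} lower bound, and then a single \(U\) works for every \(\omega\in B\). (An equivalent variant that avoids the issue entirely is to triangulate through the deterministic \(q^{P^0}_{\bgmaw,\bx}\) rather than through \(q^{\Gw}_{\bgmao,\bx}\), so that the \(\bgma\)-continuity is applied only to \(q^{P^0}\), with Lemma~\ref{lem:mixtures:uniformContinuityAux} supplying the uniformity over \(\bx\).) Second, your argument requires that \(\bgmaw\) carry a prior, independent of \(\GX\), assigning positive mass to every neighborhood of \(\bgmao\); this assumption is not stated in the theorem but is genuinely needed, so it should be made explicit rather than left implicit.
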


\section{Concluding remarks}

We have defined a DDP for general Polish spaces and introduced some parsimonious variants that may be desirable on specific applications. Furthermore, we provided sufficient conditions for different versions of DDP defined on general Polish spaces to have appealing prior theoretical properties regarding the continuity of their sample paths under different topologies, and the continuity of their autocovariance function and other, more general measures of association. 

These properties are of practical importance because they ensure that different versions of the model can combine and borrow strength across sparse data sources regarding the predictors and, therefore, avoid the need of replicates of the responses for every value of the predictors to obtain adequate estimates of the predictor-dependent probability distributions. 

Furthermore, we studied induced mixture models arising from a DDP or any of its variants. We provided sufficient conditions that ensure the dependent Dirichlet processes mixture model has a continuous density. This case is of practical interest in statistical applications. In addition, we provided sufficient conditions under which dependent Dirichlet processes mixture models have large full or large support, considering different topologies, and study the behavior of the posterior distribution under i.i.d. joint sampling of responses and predictors. The study of stronger consistency results and concentration rates is the subject of ongoing research.   

Finally, the results provided in this article can be easily extended to more general dependent stick-breaking processes.

\section*{Acknowledgments}
The authors are grateful for helpful discussions with Prof. Judith Rousseau. The authors also thank Prof. Rousseau for critical reading on an early version of the manuscript. 
    
\section*{Funding}

A. Iturriaga was supported by ``Becas de Doctorado Nacional de CONICYT.''  A.~Jara’s work was supported by Agencia Nacional de Investigación y Desarrollo (ANID) through the Fondo Nacional de Desarrollo Científico y Tecnológico (FONDECYT) grant No 1220907 and through grant NCN17\_059 from Millennium Science Initiative Program, Millennium Nucleus Center for the Discovery of Structures in Complex Data (MIDAS). C.~A.~Sing~Long was supported by by Agencia Nacional de Investigación y Desarrollo (ANID) through the Fondo Nacional de Desarrollo Científico y Tecnológico (FONDECYT) grant No 1211643 and through grant NCN17\_ 059 from Millennium Science Initiative Program, Millennium Nucleus Center for the Discovery of Structures in Complex Data (MIDAS). 

\appendix

\section{Proof of the main results}\label{apx:proofs}

\subsection{The continuity of the DDP and its variants}

\subsubsection{Proof of Theorem \ref{thm:continuity:weak}}
\label{thm:continuity:weak:proof}

By hypothesis, there is a set \(\Omega_0\subset \Omega\) of full measure such that for any \(\omega\in \Omega_0\) we have: (i) \(\piiw \geq 0\); (ii) \(\sum_{i\in\N}\piiw \equiv 1\); (iii) \(\piiw\) and \(\bthiw\) are continuous on \(\mX\) in the case of a DDP, \(\bthiw\) is continuous on \(\mX\) in the case of a \(w\)DDP, and \(\piiw\) is continuous on \(\mX\) in the case of a \(\th\)DDP. 

First, we prove the statement in the case of a DDP process. Fix \(\omega\in \Omega_0\) and \(\bx_0\in \mX\). Let \(f\in \Cb(\Theta)\) and suppose, without loss, that \(\nrmC{f} \leq 1\). Fix \(\eps > 0\) and let \(N_\eps\in \N\) be such that
\[
\sum_{i>N_\eps} \piixow < \frac{1}{4}\eps. 
\]
Define the set \(U_\eps\) as
\[
U_\eps := \bigcap_{i=1}^{N_\eps}\Lset{\bx\in \mX:\,\, |\piixw - \piixow| < \frac{1}{4N_\eps}\eps\quad\mbox{and}\quad |f(\bthixw)-f(\bthixow))| < \frac{1}{4N_\eps}\eps}.
\]
Since \(\piiw\) and \(f\circ \bthiw\) are continuous on \(\Theta\) we conclude \(U_\eps\) is an open neighborhood of \(\bx_0\). Observe that for any \(\bx\in U_\eps\) we have
\[
\sum_{i > N_\eps} \piixw = 1 - \sum_{i = 1}^{N_\eps}\piixw = \sum_{i>N_\eps} \piixow - \sum_{i = 1}^{N_\eps}(\piixw-\piixow) < \frac{1}{2}\eps.
\]
Let \(\bx\in U_\eps\) and consider the decomposition
\[
\int f(\bth) d\Gxw(\bth) - \int f(\bth) d\Gxow(\bth) = \left(\sum_{i=1}^{N_\eps} + \sum_{i> N_\eps}\right) (\piixw f(\bthixw) - \piixow f(\bthixow)).
\]
The first sum on the right-hand side can be bounded as
\begin{align*}
    \left|\sum_{i=1}^{N_\eps} (\piixw f(\bthixw) - \piixow f(\bthixow))\right| &\leq \sum_{i=1}^{N_\eps} |\piixw - \piixow| + \sum_{i=1}^{N_\eps} |f(\bthixw) - f(\bthixow)|\\
    &< \frac{1}{2}\eps,
\end{align*}
whereas the second sum can be bounded as
\[
\left|\sum_{i>N_\eps} (\piixw f(\bthixw) - \piixow f(\bthixow))\right| \leq \sum_{i>N_\eps} \piixw +  \sum_{i>N_\eps}\piixow
< \frac{1}{2}\eps.
\]
Consequently,
\[
\left|\int f(\bth) d\Gxw(\bth) - \int f(\bth) d\Gxow(\bth)\right| < \eps
\]
for any \(\bx\in U_\eps\). Since \(U_\eps\) is open, the claim follows in the case of a DDP. 

Remark that in the case of a \(w\)DDP or the case of a \(\th\)DDP the previous arguments can be adapted with minor modifications to prove the claim. We omit the details for brevity.\qed

\subsubsection{Proof of Theorem~\ref{thm:continuity:notStrongForDDP}}
\label{thm:continuity:notStrongForDDP:proof}

By hypothesis, there is a set \(\Omega_0\subset \Omega\) of full measure such that for any \(\omega\in \Omega_0\) we have: (i) \(\piiw \geq 0\); (ii) \(\sum_{i\in\N}\piiw \equiv 1\); (iii) \(\piiw\) and \(\bthiw\) are continuous on \(\mX\) in the case of a DDP, \(\bthiw\) is continuous on \(\mX\) in the case of a \(w\)DDP, and \(\piiw\) is continuous on \(\mX\) in the case of a \(\th\)DDP. 

First, we prove the statement in the case of a DDP process. Fix \(\omega\in\Omega_0\) and let \(\eps > 0\). Let \(N_\eps\) be such that
\[
\sum_{i > N_\eps} \piixow < \frac{1}{8}\eps
\]
Define the measurable set
\[
B := \set{\bthixow:\, i\in\bset{N_\eps}}
\]
and let \(f = \mathbb{I}_{B}\). Then
\[
F(\bx):= \int f(\bth) d\Gxw(\bth)
\]
if continuous at \(\bx_0\) by hypothesis. Hence, the set
\[
U_\eps := \Lset{\bx\in \mX:\, |F(\bx) - F(\bxo)| < \frac{1}{4}\eps}\cap\bigcap_{i=1}^{N_\eps}\Lset{\bx\in \mX:\,\, \piixw - \piixow| < \frac{1}{8N_\eps}\eps}
\]
is open. For any \(\bx\in U_\eps\) we have
\[
\sum_{i > N_\eps}\piixw < \frac{1}{4}\eps
\]
and
\begin{align*}
    \left|\sum_{i=1}^{N_\eps}\piixow (\delta_{\bthixw}(B) - 1)\right| &\leq \sum_{i=1}^{N_\eps}\piixw - \piixow| + \left|\sum_{i=1}^{N_\eps}(\piixw \delta_{\bthixw}(B) - \piixow \delta_{\bthixow}(B))\right|\\
    &< \frac{1}{8}\eps + \sum_{i>N_\eps}(\piixow + \piixw) + |F(\bx) - F(\bx_0)|\\
    &< \eps. 
\end{align*}
It is clear that at least one \(\bthixw\) belongs to \(B\). Note \(B\) is a finite set. Hence, by continuity, such \(\bthixw\) must be constant on \(U_\eps\). This proves the claim in the case of a DDP. For the \(w\)DDP the previous arguments can be adapted with minor modifications to prove the claim. We omit the details for brevity.\qed

\subsubsection{Proof of Theorem~\ref{thm:continuity:uniform}}
\label{thm:continuity:uniform:proof}

In the case of a \(\th\)DDP, for every \(\omega\in\Omega\) we have
\[
\sup_{B\in\mB(\Th)}\, |\Gxw(B) - \Gxow(B)| \leq \sum_{i=1}^{\infty} \piixw - \piixow|.
\]
Let \(\Omega_0\subset\Omega\) be a set of full measure such that \(\omega\in \Omega_0\) implies \(\Viw\) is continuous for every \(i\in \N\). Fix \(\omega\in \Omega_0\). Then \(\piiw\) is continuous for every \(i\in \N\). We follow a similar argument as in the proof of  Theorem~\ref{thm:continuity:weak} in Appendix~\ref{thm:continuity:weak:proof}. Let \(\eps >0\) and let \(N_\eps\in \N\) be such that
\[
\sum_{i>N_\eps} \piixow < \frac{1}{4}\eps. 
\]
Define the open neighborhood \(U_\eps\) of \(\bx_0\) as
\[
U_\eps := \bigcap_{i=1}^{N_\eps}\Lset{\bx\in \mX:\,\, \piixw - \piixow| < \frac{1}{4N_\eps}\eps}.
\]
For any \(\bx\in U_\eps\) we have
\[
\sum_{i>N_\eps}\piixw = \sum_{i>N_\eps} \piixow - \sum_{i = 1}^{N_\eps}(\piixw-\piixow) < \frac{1}{2}\eps
\]
whence
\[
\sum_{i=1}^{\infty} |\piixw - \piixow| \leq \sum_{i=1}^{N_\eps} |\piixw - \piixow| + \sum_{i> N_\eps} (\piixw + \piixow) < \eps.
\]
Consequently, for any \(\bx\in U_\eps\) we have
\[
\sup_{B\in\mB(\Th)}\, |\Gxw(B) - \Gxow(B)| < \eps
\]
from where the theorem follows.\qed

\subsection{The support of the DDP and its variants}

We first prove the following auxiliary result.

\begin{lemma}\label{lem:support:approximation}
    Let \(P:\mX\to \mP(\Th)\) and \(f_1,\ldots, f_n\in \LInf(\Th)\). Define
    \[
    F_i(\bx) : = \int_{\Th} f_i(\bth)\, dP_{\bx}(\bth).
    \]
    Let \(K\subset \mX\) be compact. If \(F_1,\ldots, F_n: K\to \R\) are continuous, then for any \(\eps > 0\) there exists \(\olP:K \to \mP(\Th)^{\mX}\) such that:
    \begin{enumerate}[leftmargin=24pt]
        \item{For any \(\bx\in K\) we have
            \[
            \left|\int_{\Th} f_i(\bth)\, dP_{\bx}(\bth) - \int_{\Th} f_i(\bth)\, d\olP_{\bx}(\bth)\right| < \eps
            \]
        }
        \item{For any \(B\in \mB(\Th)\) the map \(\bx\mapsto \olP_{\bx}(B)\) is Lipschitz continuous on \(K\).
        }
        \item{The collection \(\set{\olP_{\bx}:\,\bx\in K}\) is tight.
        }
    \end{enumerate}
\end{lemma}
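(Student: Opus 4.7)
The approach is to construct $\olP_{\bx}$ as a convex combination of finitely many values $P_{\bx_j}$ using a Lipschitz partition of unity on the compact set $K$. Since a convex combination of probability measures is a probability measure, since finite collections in $\mP(\Th)$ are tight, and since Lipschitz combinations inherit Lipschitz continuity, all three conclusions will follow transparently. The only substantive issue is controlling the approximation error in condition~1, which I handle via uniform continuity of each $F_i$ on $K$.

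\textbf{Construction.} Since $F_1,\ldots,F_n:K\to\R$ are continuous on a compact metric space they are uniformly continuous, so for each $\eps>0$ there exists $\delta>0$ such that
\[
\dX(\bx,\bx')<2\delta \quad\Rightarrow\quad |F_i(\bx) - F_i(\bx')| < \eps,\qquad i\in\bset{n}.
\]
Pick a finite $\delta$-net $\set{\bx_1,\ldots,\bx_m}\subset K$, so that $\set{B(\bxj,\delta)}_{j\in\bset{m}}$ covers $K$. Define
\[
d_j(\bx) := \max(0,\, 2\delta - \dX(\bx,\bxj)),\qquad \phi_j(\bx) := \frac{d_j(\bx)}{\sum_{k=1}^m d_k(\bx)}.
\]
Each $d_j$ is $1$-Lipschitz, and for every $\bx\in K$ there is $k$ with $\dX(\bx,\bxk)<\delta$, whence $\sum_k d_k(\bx) > \delta$. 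Thus $\set{\phi_j}_{j\in\bset{m}}$ is a Lipschitz partition of unity on $K$, and moreover $\phi_j(\bx)>0$ implies $\dX(\bx,\bxj)<2\delta$. Finally set
\[
\olP_{\bx} := \sum_{j=1}^m \phi_j(\bx)\, P_{\bxj},\qquad \bx\in K.
\]

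\textbf{Verification.} Since each $\phi_j(\bx)\geq 0$ and $\sum_j \phi_j(\bx) = 1$, the measure $\olP_{\bx}$ is a convex combination of probability measures, hence a probability measure. For any $B\in\mB(\Th)$, the map
\[
\bx \mapsto \olP_{\bx}(B) = \sum_{j=1}^m \phi_j(\bx)\, P_{\bxj}(B)
\]
is a finite linear combination of the Lipschitz functions $\phi_j$ with coefficients in $[0,1]$, so it is Lipschitz on $K$ with a constant independent of $B$, giving condition~2. Since $\Th$ is Polish the finite family $\set{P_{\bx_1},\ldots,P_{\bx_m}}$ is tight, so for any $\eta>0$ there is a compact $K_{\eta}\subset\Th$ with $P_{\bxj}(\Th\setminus K_\eta)<\eta$ for all $j$; then $\olP_{\bx}(\Th\setminus K_\eta) \leq \sum_j \phi_j(\bx)\,\eta = \eta$, giving condition~3. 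For condition~1, using $F_i(\bx) = \sum_j \phi_j(\bx) F_i(\bx)$ and that $\phi_j(\bx)>0$ forces $\dX(\bx,\bxj)<2\delta$,
\[
\left|\int f_i d\olP_{\bx} - F_i(\bx)\right| = \left|\sum_{j=1}^m \phi_j(\bx)\bigl(F_i(\bxj) - F_i(\bx)\bigr)\right| \leq \sum_{j=1}^m \phi_j(\bx)\,\eps = \eps.
\]

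\textbf{Main obstacle.} The construction itself is elementary; the only delicate point is producing a genuinely \emph{Lipschitz} (not merely continuous) partition of unity on an arbitrary compact subset of a metric space, which is handled above by bounding $\sum_k d_k$ away from zero via the $\delta$-net property. The rest reduces to uniform continuity of finitely many real-valued functions on a compact set, which is automatic.
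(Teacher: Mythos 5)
Your proof is correct and follows essentially the same route as the paper: a Lipschitz partition of unity built from truncated tent functions $\max(0,\cdot)$ over a finite net of $K$, with $\olP_{\bx}$ defined as the resulting convex combination of the $P_{\bx_j}$, and the three properties verified exactly as in the paper (uniform continuity for the error bound, the quotient structure for Lipschitz continuity, and tightness of the finite family $\set{P_{\bx_1},\ldots,P_{\bx_m}}$). The only cosmetic differences are your choice of support radius $2\delta$ versus the paper's $r<\delta/2$ and that you do not write out the quotient-rule Lipschitz estimate for $\phi_j$, which the paper does but which is standard.
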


\begin{proof}[Proof of Lemma~\ref{lem:support:approximation}]
    Let \(\eps > 0\). We begin by constructing a suitable partition of unity in \(K\). Since \(F_1,\ldots, F_n\) are uniformly continuous on \(K\) there exists \(\delta > 0\) such that
    \[
    \forall\, \bx,\bx'\in K,\, i\in\bset{n}:\,\, \dX(\bx,\bx') < \delta\,\,\Rightarrow\,\, |F_i(\bx) - F_i(\bx')| < \eps.
    \]
    Let \(r < \delta/2\). From the open cover \(\set{B(\bx, r)}_{\bx\in K}\) we can extract the finite subcover \(\set{B(\bx_k,r)}_{k=1}^{N_K}\). We construct a continuous partition of the unity on \(K\) surrogate to this cover as follows. Define the continuous functions
    \[
    \olvphi_{k}(\bx) = \max\left(0, 1 - \frac{\dX(\bx, \bx_{k})}{r}\right).
    \]
    for \(k\in\set{1,\ldots, N_K}\). Note that \(\bx\notin B(\bx_k,r)\) implies \(\olvphi_k(\bx) = 0\). It can be verified that
    \[
    \sum_{k=1}^{N_K} \olvphi_{k}(\bx) \geq c_{\min} > 0
    \]
    for any \(\bx \in K\). Therefore, we define the continuous functions
    \[
    \vphi_k(\bx) = \frac{\olvphi_k(\bx)}{\sum_{k=1}^{N_K} \olvphi_{k}(\bx)}
    \]
    which satisfy \(\sum_{k=1}^{N_K} \vphi_k \equiv 1\) over \(K\). Define
    \[
    \olF_i(\bx) := \sum_{k=1}^{N_K} F_i(\bx_k)\vphi_k(\bx)\quad\mbox{and}\quad \olP_{\bx} := \sum_{k=1}^{N_K} \vphi_k(\bx) P_{\bx_k}.
    \]
    These functions satisfy
    \[
    \olF_i(\bx) := \int_{\Theta} f_i(\bth)\left(\sum_{k=1}^{N_K}\vphi_k(\bx) dP_{\bx_k}(\bth)\right) = \int_{\Theta} f_i(\bth) d\olP_{\bx}(\bth). 
    \]
    To prove (a) note that \(\olF_i\) satisfies
    \[
    |\olF_i(\bx) - F_i(\bx)| \leq \sum_{k:\, \bx\in B(\bx_{k},r)} |F_i(\bx_k) - F_i(\bx)|\vphi_k(\bx) < \eps
    \]
    for any \(\bx\in K\). To prove (b) note that for any \(B\in\mB(\Th)\) we have
    \[
    \olP_{\bx}(B) = \sum_{k=1}^{N_K} \vphi_i(\bx) \olP_{\bx_k}(B)
    \]
    which is continuous by construction. To prove it is Lipschitz, note that
    \begin{align*}
        |\vphi_i(\bx) - \vphi_i(\bx')| &\leq \frac{|\olvphi_i(\bx) - \olvphi_i(\bx')|}{\sum_{j=1}^{N_K}\olvphi_j(\bx)} + \frac{\olvphi_j(\bx')}{\sum_{j=1}^{N_K}\olvphi_j(\bx)\sum_{k=1}^{N_K}\olvphi_k(\bx')}\sum_{\ell=1}^{N_K}|\olvphi_{\ell}(\bx) - \olvphi_\ell(\bx')| \\
        &\leq c_{\min}^{-1}(N_K c_{\min}^{-1} + 1) \dX(\bx,\bx').
    \end{align*}
    Finally, to prove (c) note that \(\Th\) is Polish. Hence, the collection \(P_{\bx_1},\ldots, P_{\bx_{N_K}}\) is tight. For any \(\eps > 0\) there exists \(K_\Th \subset \Th\) such that
    \[
    P_{\bx_k}(K_{\Theta}) > 1 - \eps. 
    \]
    However, since \(\vphi_i \geq 0\) we have
    \[
    \olP_{\bx}(B) = \sum_{k=1}^{N_K} \vphi_i(\bx) \olP_{\bx_k}(B) > 1  - \eps,
    \]
    proving the claim.
\end{proof}

\subsubsection{Proof of Theorem~\ref{thm:support:product-weak}}
\label{thm:support:product-weak:proof}

By possibly modifying the sequence \(\eps_{1,1},\ldots,\eps_{n,n}\) we can assume, without loss, that \(\nrmC{f_{i,j}} \leq 1\). Let \(\Omega_0\subset \Omega\) be a set of full measure such that for any \(\omega\in \Omega_0\) we have: (i) \(\piiw \geq 0\); (ii) \(\sum_{i\in\N}\piiw \equiv 1\); (iii) \(\piiw\) and \(\bthiw\) are continuous on \(\mX\) in the case of a DDP, \(\bthiw\) are continuous on \(\mX\) in the case of a \(w\)DDP, or \(\piiw\) are continuous on \(\mX\) in the case of a \(\th\)DDP.

Let \(\eps > 0\) be such that \(\eps < \eps_0:=\min(\eps_{1,1},\ldots, \eps_{n,n})\) and define the event
\[
\Lset{\omega\in\Omega:\,\, \left|\int_\Theta f_{i,j}(\bth) d\Gw_{\bx_j}(\bth) - \int_\Theta f_{i,j}(\bth) dP^0_{\bx_j}(\bth)\right| < \eps_{0},\,\, i,j\in \bset{n}}.
\]
By inspection, the event is measurable. It suffices to show that this event has positive probability. The first step of the proof is the same regardless of the variant. 

Since \(\Th\) is Polish, the collection \(P^0_{\bx_1},\ldots, P^0_{\bx_n}\) is tight. Let \(K_{\Th}\subset \Th\) be a compact set such that \(P^0_{\bx_j}(\Th\setminus K_{\Th}) < \eps\) for \(j\in\set{1,\ldots, n}\). Since \(f_{1,1},\ldots,f_{n,n}\) are uniformly continuous on \(K_{\Th}\), there exists \(\delta>0\) such that
\[
\forall\ \bth,\bth'\in K_{\Th},\, i,j\in\set{1,\ldots,n}:\,\, \dTh(\bth,\bth') < \delta\quad\Rightarrow\quad |f_{i,j}(\bth) - f_{i,j}(\bth')| < \eps.
\]
Let \(r < \delta/2\). Then \(\set{B(\bth,r)}_{\bth\in K_{\Th}}\) is an open cover of \(K_{\Th}\) and we can extract a finite subcover \(\set{B(\bth_k, r)}_{k=1}^{\NTh}\). By possibly removing elements, we may assume no ball is covered by the union of the remanining ones. This subcover induces the partition
\begin{align}
    \label{eq:support:compactThetaPartition}
    \begin{split}
        A_1 &:= K_{\Th} \cap B(\bth_k,r)\\
        A_k &:= K_{\Th} \cap \left(B(\bth_k,r_f) \setminus\bigcup_{\ell
            = 1}^{k-1}B(\bth_{\ell},r)\right)\qquad k\in\set{2,\ldots, N_\Theta}.
    \end{split}
\end{align}
of \(K_\Theta\). Remark that no \(A_k\) is empty and that \(A_k \subset B(\bth_k,r)\). Therefore, no \(f_i\) varies by more than \(2\eps\) over any \(A_k\). Note that
\[
\left|\int_{\Th\setminus K_{\Th}} f_{i,j}(\bth) dP_{\bx_j}^0(\bth)\right| \leq P^0_{\bx_j}(\Th\setminus K_{\Th}) < \eps,
\]
and
\[
\left|\int_{K_{\Th}} f_{i,j}(\bth) dP_{\bx_i}^0(\bth) - \sum_{k=1}^{\NTh} f_{i,j}(\bth_k) P_{\bx_j}^0(A_k)\right| < 2\eps \sum_{k=1}^{\NTh} P_{\bx_j}(A_k) \leq 2\eps.
\]
Therefore,
\begin{multline*}
    \left|\int_{\Th} f_{i,j}(\bth) d\Gw_{\bx_j}(\bth) - \int_{\Th} f_{i,j}(\bth) dP_{\bx_j}^0(\bth)\right| \\
    < 3\eps + \left|\int_{\Th} f_{i,j}(\bth) d\Gw_{\bx_j}(\bth) - \sum_{k=1}^{\NTh} f_{i,j}(\bth_k) P_{\bx_j}^0(A_k)\right|.
\end{multline*}
Since \(f_{1,1},\ldots, f_{n,n}\) are continuous, define the open subsets
\[
U_k := \bigcap_{j=1}^{n}\bigcap_{i=1}^{n}\Lset{\bth\in \Theta:\,\, |f_{i,j}(\bth) - f_{i,j}(\bth_k)| < \frac{1}{N_\Theta}\eps}
\]
and the event
\[
\Omega_\pi :=  \bigcap_{k=1}^{\NTh}\bigcap_{j=1}^{n}\Lset{\omega\in \Omega_0:\, |\piw_{k,\bx_j} - P^0_{\bx_j}(A_k)| < \frac{1}{\NTh}\eps}
\]
which has positive measure by hypothesis. Note that for any \(\omega\in \Omega_\pi\) we have
\[
\sum_{k > \NTh} \piw_{k,\bx_j} = 1 - \sum_{k=1}^{\NTh} P^0_{\bx_j}(A_k) - \sum_{k=1}^{\NTh} (\piw_{k,\bx_j} - P^0_{\bx_j}(A_k)) < 2\eps
\]
for \(i\in\bset{\NTh}\).

The second step of the proof changes slightly in the case of a DDP, a \(w\)DDP, and the \(\th\)DDP. We present it first in the case of a DDP. For \(\omega\in \Omega_\pi\) we have
\begin{multline*}
    \left|\int_{\Th} f_{i,j}(\bth) d\Gw_{\bx_j}(\bth) - \sum_{k=1}^{\NTh} f_{i,j}(\bth_k) P_{\bx_j}^0(A_k)\right| \\ 
    \leq \sum_{k > \NTh}\piw_{k,\bx_j} + \left|\sum_{k = 1}^{\NTh} (\piw_{k,\bx_j} f_{i,j}(\bthw_{k,\bx_j}) - P^0_{\bx_j}(A_k) f_{i,j}(\bth_{k}))\right|\\
    \leq 3\eps + \sum_{k = 1}^{\NTh} P^0_{\bx_j}(A_k) |f_i(\bthw_{k,\bx_j}) - f_{i,j}(\bth_{k})|.
\end{multline*}
Consider the event
\[
\Omega_\theta := \bigcap_{k=1}^{\NTh}\bigcap_{j=1}^{n}\set{\omega\in \Omega_0:\,\, \bthw_{k,\bx_j}\in U_k},
\]
which has positive measure by hypothesis as each \(U_k\) is open. Since the events \(\Omega_\pi\) and \(\Omega_\th\) are independent, the event \(\Omega_\pi\cap\Omega_\th\) has positive measure. Hence, for any \(\omega\in\Omega_\pi\cap\Omega_\th\) we have
\[
\left|\int_{\Th} f_{i,j}(\bth) d\Gw_{\bx_j}(\bth) - \sum_{k=1}^{\NTh} f_{i,j}(\bth_k) P_{\bx_j}^0(A_k)\right| < 4\eps
\]
whence
\[
\left|\int_{\Th} f_{i,j}(\bth) d\Gw_{\bx_j}(\bth) - \int_{\Th} f_{i,j}(\bth) dP_{\bx_j}^0(\bth)\right| < 7\eps
\]
for any \(i,j\in\bset{n}\). This proves the theorem in the case of a DDP. 

To prove the theorem in the case of a \(\th\)DDP, the key inequality is
\[
\left|\int_{\Th} f_{i,j}(\bth) d\Gw_{\bx_j}(\bth) - \sum_{k=1}^{\NTh} f_{i,j}(\bth_k) P_{\bx_j}^0(A_k)\right| < 3\eps + \sum_{k = 1}^{\NTh} P^0_{\bx_j}(A_k) |f_i(\bthw_{k}) - f_{i,j}(\bth_{k})|.
\]
Note that it suffices to consider the event 
\[
\Omega_\theta := \bigcap_{k=1}^{\NTh}\set{\omega\in \Omega_0:\,\, \bthw_{k}\in U_k}
\]
which has positive measure as the \(U_k\) are open. The rest of the argument is essentially the same as in the case of a DDP.

The proof in the case of a \(w\)DDP follows first the same step as in the other cases. However, the remainder of the proof is different and slighly more involved. Let \(M\in\N\) be such that there are integers \(m_{j,k}\in\bset{M}\) such that
\[
\left|P^0_{\bxj}(A_k) - \frac{m_{j,k}}{M}\right| < \frac{\eps}{2\NTh}.
\]
Note that for this choice, for any \(j\in\bset{n}\),
\[
\sum_{k=1}^{\NTh}\frac{m_{j,k}}{M} < \sum_{k=1}^{\NTh}P^0_{\bxj}(A_k) + \frac{1}{2}\eps \leq 1 + \frac{1}{2}\eps
\]
and
\[
\sum_{k=1}^{\NTh}\frac{m_{j,k}}{M} \geq \sum_{k=1}^{\NTh}P^0_{\bxj}(A_k) - \frac{\eps}{2} = P^0_{\bxj}(K_{\Th}) -\frac{\eps}{2} \geq 1 - \frac{3}{2}\eps. 
\]
Define the event 
\[
\Omega_\pi :=  \bigcap_{i=1}^{M}\Lset{\omega\in \Omega_0:\, \piiw \in \left(\frac{1-\eps/2\NTh}{M}, \frac{1}{M}\right] }
\]
which has positive measure by hypothesis. Note that for \(\omega\in \Omega_\pi\) we have
\[
\left(1-\frac{\eps}{2\NTh}\right)\frac{M'}{M}< \sum_{i=1}^{M'} \piiw \leq \frac{M'}{M}
\]
for any \(M'\in\bset{M}\). Hence, for every \(j\) we have
\[
\left|P^0_{\bxj}(A_k) - \sum_{i\in I_{j,k}} \piiw\right| \leq \left|P^0_{\bxj}(A_k) - \frac{m_{j,k}}{M}\right| + \left| \frac{m_{j,k}}{M}- \sum_{i\in I_{j,k}} \piiw\right| < \frac{\eps}{2\NTh}+ \frac{\eps}{2\NTh} = \frac{\eps}{\NTh}
\]
where \(I_{j,k}\subset \bset{M}\) is an arbitrary subset such that \(|I_{j,k}| = m_{j,k}\). Therefore, for every \(j\in \bset{n}\) we let \(\set{I_{j,k}}_{k=1}^{\NTh}\) be a collection of disjoint sets \(\bset{M}\) for which \(|I_{j,k}| = m_{j,k}\) and we let \(I^0_j\) be the complement of their union. Note that
\[
\sum_{i\in I_j^0} \piiw = \sum_{i=1}^M \piiw - \sum_{k=1}^{\NTh} \sum_{i\in I_{j,k}}\piiw < 1 + \frac{1}{2}\eps - P^0_{\bx_j}(K_{\Th}) < \frac{3}{2}\eps
\]
and
\[
\sum_{i > M} \piiw = 1 - \sum_{i = 1}^{M}\piiw < \frac{1}{2\NTh}\eps < \frac{1}{2}\eps.
\]
Consequently, in the case of a \(w\)DDP we obtain the inequality
\begin{multline*}
    \left|\int_{\Th} f_{i,j}(\bth) d\Gw_{\bx_j}(\bth) - \sum_{k=1}^{\NTh} f_{i,j}(\bth_k) P_{\bx_j}^0(A_k)\right| \\\leq \sum_{\ell > M}\piw_{\ell} + \sum_{\ell\in I_j^0} \piw_{\ell} + \sum_{k=1}^{\NTh}\left|\sum_{\ell \in I_{j,k}} \piw_{\ell} f_{i,j}(\bthw_{\ell,\bx_j}) - P^0_{\bx_j}(A_k) f_{i,j}(\bth_{k})\right| \\
    < 2\eps +\sum_{k=1}^{\NTh}\left|\sum_{\ell \in I_{j,k}} \piw_{\ell} (f_{i,j}(\bthw_{\ell,\bx_j}) -  f_{i,j}(\bth_{k}))\right| + \sum_{k=1}^{\NTh}|f_{i,j}(\bth_k)|\left|\sum_{\ell \in I_{j,k}} \piw_{\ell} - P^0_{\bx_j}(A_k)\right|\\
    < 3\eps + \sum_{k=1}^{\NTh}\left|\sum_{\ell \in I_{j,k}} \piw_{\ell} (f_{i,j}(\bthw_{\ell,\bx_j}) -  f_{i,j}(\bth_{k}))\right|.
\end{multline*}
Hence, consider the event
\[
\Omega_\theta := \bigcap_{k=1}^{\NTh}\bigcap_{j=1}^{n}\bigcap_{i\in I_{j,k}}\set{\omega\in \Omega_0:\,\, \bthiw \in U_k},
\]
which has positive measure by hypothesis. By independence, \(\Omega_\pi\cap\Omega_\th\) has positive measure, and for \(\omega\in\Omega_\pi\cap\Omega_\th\) we have
\[
\sum_{k=1}^{\NTh}\left|\sum_{\ell \in I_{j,k}} \piw_{\ell} (f_{i,j}(\bthw_{\ell,\bx_j}) -  f_{i,j}(\bth_{k}))\right| < \frac{\eps}{\NTh}\sum_{k=1}^{\NTh} \sum_{\ell \in I_{j,k}} \piw_{\ell} < \frac{\eps}{\NTh}\left(\eps + \sum_{k=1}^{\NTh} P_{\bxj}^0(A_k)\right) < \eps.
\]
Therefore, 
\[
\left|\int_{\Th} f_{i,j}(\bth) d\Gw_{\bx_j}(\bth) - \sum_{k=1}^{\NTh} f_{i,j}(\bth_k) P_{\bx_j}^0(A_k)\right| < 4\eps,
\]
proving the theorem in the case of a \(w\)DDP.

\qed

\subsubsection{Proof of Theorem~\ref{thm:support:compact-weak}}
\label{thm:support:compact-weak:proof}

By possibly modifying the sequence \(\eps_{1},\ldots,\eps_{n}\) we can assume, without loss, that \(\nrmC{f_i} \leq 1\). Let \(\Omega_0\subset \Omega\) be a set of full measure such that for any \(\omega\in \Omega_0\) we have: (i) \(\piiw \geq 0\); (ii) \(\sum_{i\in\N}\piiw \equiv 1\); (iii) \(\piiw\) and \(\bthiw\) are continuous on \(\mX\) in the case of a DDP, or \(\piiw\) are continuous on \(\mX\) in the case of a \(\th\)DDP. 

For simplicity, we write for \(\omega\in \Omega\)
\[
\Fw_i(\bx) := \int_{\Theta} f_i(\bth)\, d\Gxw(\bth).
\]
By Theorem~\ref{thm:continuity:weak}, \(\Fw_i\) is continuous on \(\mX\) for almost every \(\omega\in\Omega\). By possibly removing from \(\Omega_0\) a null set, we may assume \(\Fw_i\) is continuous for every \(\omega\in\Omega_0\) and every \(i\in\bset{n}\). Furthermore, we write
\[
F_i(\bx) := \int_{\Theta} f_i(\bth) dP_{\bx}(\bth)
\]
which, by hypothesis, is a continuous function on \(\mX\). Since \(\nrmC{f_i}\leq 1\) we have \(|\Fw_i| \leq 1\) and \(|F_i|\leq 1\). 

Let \(\eps_0 < \min(\eps_1,\ldots,\eps_n)\) and define the event
\[
\Omega_{K,\eps_0} := \bigcap_{i=1}^{N}\Lset{\omega\in\Omega_0:\,\, \sup_{\bx\in K}\,|\Fw_i(\bx) - F_i(\bx))| < \eps_0}.
\]
Since \(\Th\) is separable, a compact \(K\subset \Th\) is also separable. By hypothesis \(F_i\) is continuous and by construction \(\Fw\) is continuous for every \(\omega \in \Omega_0\). Hence, the event is measurable, and the theorem follows if we show that this event has positive measure. Let \(\eps>0\) with \(\eps < \eps_0\). Since \(K\) is compact, by Lemma~\ref{lem:support:approximation} there is a function \(\olP: K\to \mP(\Th)\) such that for
\[
\olF_i(\bx) := \int_{\Theta} f_i(\bth) d\olP_{\bx}(\bth)
\]
we have
\[
\sup_{\bx\in K}|F_i(\bx) - \olF_i(\bx)| < \eps. 
\]
Hence, 
\[
|\Fw_i(\bx) - F_i(\bx))| < |\Fw_i(\bx) - \olF_i(\bx)| + \eps. 
\]
for any \(i\in\set{1,\ldots, n}\) and \(\bx\in K\). Since \(\set{\olP_{\bx}:\,\bx\in K}\) is tight by Lemma~\ref{lem:support:approximation}, there exists \(K_{\Th}\subset \Th\) compact such that \(\olP_{\bx}(\Th\setminus K_{\Th}) < \eps\) for any \(\bx\in K\). Hence, 
\[
\left|\olF_i(\bx) - \int_{K_{\Th}} f_i(\bth) \olP_{\bx}(\bth) \right| \leq \olP_{\bx}(\Th\setminus K_{\Th}) < \eps
\]
and
\[
|\Fw_i(\bx) - F_i(\bx)| \leq |\Fw_i(\bx) - \olF_i(\bx)| + \eps \leq \left|\Fw_i(\bx) - \int_{K_{\Th}} f_i(\bth) d\olP_{\bx}(\bth)\right| + 2\eps
\]
on \(K\). Finally, since \(f_1,\ldots, f_n\) are continuous, we can use the same construction as that in the proof of Theorem~\ref{thm:support:product-weak} in Appendix~\ref{thm:support:product-weak:proof} to obtain a partition \(\set{A_k}_{k=1}^{\NTh}\) of \(K_{\Th}\) as in~\eqref{eq:support:compactThetaPartition} where each \(A_k\) is measurable, non-empty, and every \(f_i\) varies at most by \(2\eps\) over any \(A_k\). Then
\[
\left|\int_{K_{\Th}} f_i(\bth) d\olP_{\bx}(\bth) - \sum_{k=1}^{\NTh} f_i(\bth_k) \olP_{\bx}(A_k)\right| \leq \sum_{k=1}^{\NTh}\int_{A_k}|f_i(\bth) - f_i(\bth_k)|\,d\olP_{\bx}(\bth) < \eps
\]
whence
\[
|\Fw_i(\bx) - F_i(\bx)| < \left|\Fw_i(\bx) - \sum_{k=1}^{\NTh} f_i(\bth_k) \olP_{\bx}(A_k) \right| + 3\eps.
\]

Since \(\bx\to \olP_{\bx}(A_k)\) is continuous by Lemma~\ref{lem:support:approximation}, with values on \([0, 1]\), and
\[
1 -\eps < \sum_{k=1}^{\NTh} \olP_{\bx}(A_k) \leq 1
\]
the event
\[
\Omega_\pi:= \bigcap_{k=1}^{\NTh}\Lset{\omega\in \Omega_0:\,\, \sup_{\bx\in K} |\piw_{k,\bx} - \olP_{\bx}(A_k)| < \frac{1}{\NTh}\eps}
\]
is measurable, as \(\bx\mapsto \piw_{k,\bx}\) is continuous for \(\omega\in\Omega_0\) and \(\bx\mapsto \olP_{\bx}(A_k)\) is continuous by Lemma~\ref{lem:mixtures:uniformContinuityAux}, and has positive measure by hypothesis. The proof now proceeds exactly the same as the proof of Theorem~\ref{thm:support:product-weak} in Appendix~\ref{thm:support:product-weak:proof} for the DDP and \(\th\)DDP. We omit the details for brevity. \qed

\subsubsection{Proof of Theorem~\ref{thm:support:product-strong}}
\label{thm:support:product-strong:proof}

Let \(\Omega_0\subset \Omega\) be a set of full measure such that for any \(\omega\in \Omega_0\) we have: (i) \(\piiw \geq 0\); (ii) \(\sum_{i\in\N}\piiw \equiv 1\); (iii) \(\piiw\) and \(\bthiw\) are continuous on \(\mX\) for the DDP,  \(\bthiw\) are continuous on \(\mX\) in the case of a \(w\)DDP, or \(\piiw\) is continuous on \(\mX\) in the case of a \(\th\)DDP.

We prove in detail the statement in the case of a DDP. If \(P^0\in \mP(\Th)^{\mX}|_{\GX}\) then we consider the event 
\[
\Lset{\omega\in\Omega:\,\, \left|\int_\Theta f_{i,j}(\bth) d\Gw_{\bx_j}(\bth) - \int_\Theta f_{i,j}(\bth) dP^0_{\bx_j}(\bth)\right| < \eps_{i,j},\, i,j\in \bset{n}}
\]
which is measurable by inspection. A standard argument shows that, by possibly reducing \(\eps_{1,1},\ldots, \eps_{n,n}\), we can assume \(f_{i,j}\) are simple functions, and that there exists a partition \(\set{A_{k}}_{k=1}^{N_f}\) of \(\Th\) of measurable sets such that
\[
f_{i,j} = \sum_{k = 1}^{N_f} c_{i,j,k}\ind_{A_k}
\]
where \(\ind_{A}\) is the indicator function of the set \(A\) and \(|c_{i,j,k}| \leq 1\) for any \(i,j\in\set{1,\ldots,n}\) and \(k\in \set{1,\ldots, N_f}\). 

Let \(\eps_0 > 0\) be such that \(\eps_0 < \min(\eps_{1,1},\ldots, \eps_{n,n})\). Note that 
\[
\int_\Theta f_{i,j}(\bth) d\Gw_{\bx_j}(\bth) - \int_\Theta f_{i,j}(\bth) dP^0_{\bx_j}(\bth) = \sum_{k=1}^{N_f} c_{i,j,k} (\Gw_{\bxj}(A_k) - P^0_{\bxj}(A_k)). 
\]
Consider the event 
\[
\Omega_\pi := \bigcap_{k=1}^{\Nf}\bigcap_{j=1}^{n}\Lset{\omega\in \Omega_0:\, |\piw_{k,\bx_j} - P^0_{\bx_j}(A_k)| < \frac{1}{\Nf^2}\eps},
\]
which has positive measure by hypothesis. Remark that, in this case, for any \(j\in \bset{n}\) we have
\[
\sum_{i > \Nf} \piw_{i,\bxj} = 1 - \sum_{i=1}^{\Nf}\piw_{i,\bxj} = \sum_{i=1}^{\Nf}(P^0_{\bxj}(A_k) - \piw_{i,\bxj}) < \frac{1}{\Nf}\eps
\]
where we used the fact that \(\set{A_k}_{k=1}^{\Nf}\) is a partition. Hence,
\begin{align*}
    \left|\sum_{k=1}^{N_f} c_{i,j,k} (d\Gw_{\bxj}(A_k) - P^0_{\bxj}(A_k))\right| &\leq  N_f\sum_{i > \Nf}\piw_{i,\bx_j} + \sum_{k=1}^{\Nf} \left|\sum_{i=1}^{N_f} \piw_{i,\bxj} \delta_{\bthw_{i,\bxj}}(A_k) - P^0_{\bxj}(A_k)\right|\\
    &< \eps + \sum_{k=1}^{\Nf} \left|\sum_{i=1}^{N_f} \piw_{i,\bxj} \delta_{\bthw_{i,\bxj}}(A_k) - P^0_{\bxj}(A_k)\right|. 
\end{align*}
Now we use the fact that \(P^0\in \mP(\Th)^{\mX}|_{\GX}\). In this case, \(G^0_{\bx_j}(A_k) = 0\) implies \(P^0_{\bx_j}(A_k) = 0\) and no such terms do not contribute to the sum. We can define the event
\[
\Omega_{\th} := \set{\omega\in \Omega_0:\,\, \bthw_{k,\bx_j}\in A_k\,\,\mbox{and}\,\, G^0_{\bxj}(A_k) >0,\, j\in\bset{n},\, k\in\bset{N_f}}
\]
which has positive measure. By independence \(\Omega_\pi\cap\Omega_\th\) has positive measure. Hence, for \(\omega\in\Omega_\pi\cap\Omega_\th\) we have
\[
\left|\sum_{i=1}^{N_f} \piw_{i,\bxj} \delta_{\bthw_{i,\bxj}}(A_k) - P^0_{\bxj}(A_k)\right| = |\piw_{k,\bxj} - P^0_{\bxj}(A_k)| < \frac{1}{\Nf^2}\eps. 
\]
if \(G^0_{\bxj}(A_k) > 0\). If \(G^0_{\bxj}(A_k) = 0\) then \(P^0_{\bxj}(A_k) = 0\) and
\[
\left|\sum_{i=1}^{N_f} \piw_{i,\bxj} \delta_{\bthw_{i,\bxj}}(A_k) - P^0_{\bxj}(A_k)\right| \leq \sum_{k:\, G^0_{\bxj}(A_k) = 0}\piw_{i,\bxj} < \frac{1}{\Nf}\eps.
\]
Therefore,
\[
\left|\int_\Theta f_{i,j}(\bth) d\Gw_{\bx_j}(\bth) - \int_\Theta f_{i,j}(\bth) dP^0_{\bx_j}(\bth)\right| < 2\eps,
\]
proving the theorem in the case of a DDP. 

To prove the theorem in the case of a \(\th\)DDP, the argument is the same up to the inequality 
\[
\left|\sum_{k=1}^{N_f} c_{i,j,k} (d\Gw_{\bxj}(A_k) - P^0_{\bxj}(A_k))\right| < \eps + \sum_{k=1}^{\Nf} \left|\sum_{i=1}^{N_f} \piw_{i,\bxj} \delta_{\bthiw}(A_k) - P^0_{\bxj}(A_k)\right|. 
\]
In this case, we define the event 
\[
\Omega_{\th} := \set{\omega\in \Omega_0:\,\, \bthw_{k}\in A_k\,\,\mbox{and}\,\, G^0(A_k) >0,\, k\in\bset{N_f}}.
\]
Since \(P^0\in \mP(\Th)^{\mX}|_{\GX}\) this event has positive measure. The proof then follows the same steps as those in the case of a DDP. 

Finally, in the case of a \(w\)DDP we follow a similar argument as that on the proof of Theorem~\ref{thm:support:product-weak} in Appendix~\ref{thm:support:product-weak:proof}. By choosing a suitable \(M\) such that there are integers \(m_{j,k} \in \set{1,\ldots, M}\) such that
\[
\left|P^0_{\bxj}(A_k) - \frac{m_{j,k}}{M}\right| < \frac{\eps}{2\Nf}
\]
we can define the event 
\[
\Omega_\pi :=  \bigcap_{i=1}^{M}\Lset{\omega\in \Omega_0:\, \piiw \in \left(\frac{1-\eps/2\Nf}{M}, \frac{1}{M}\right] },
\]
which has positive measure by hypothesis, and for every \(j\in \set{1,\ldots,n}\) we can define a collection \(\set{I_{j,k}}_{k=1}^{\Nf}\) of disjoint sets \(\set{1,\ldots,M}\) for which \(|I_{j,k}| = m_{j,k}\). We let \(I^0_j\) be the complement of their union. Note that for \(\omega\in\Omega_\pi\) we have
\[
\sum_{i\in I_j^0} \piiw < \frac{3}{2}\eps\quad\mbox{and}\quad\sum_{i > M} \piiw< \frac{1}{2}\eps.
\]
Hence, we obtain the inequality 
\begin{align*}
    |\Gw_{\bxj}(A_k) - P^0_{\bxj}(A_k)| &\leq \sum_{\ell > M} \piw_{\ell} + \left|\sum_{\ell=1}^M \piw_{\ell} \delta_{\bthw_{\ell,\bxj}}(A_k) - P^0_{\bxj}(A_k)\right|\\
    &< \frac{3}{2\Nf}\eps + \sum_{\ell\in I^0_j} + \left|\sum_{k=1}^{\Nf}\sum_{\ell\in I_{j,k}} \piw_{\ell} \delta_{\bthw_{\ell,\bxj}}(A_k) - P^0_{\bxj}(A_k)\right|\\
    &<\frac{2}{\Nf}\eps + \left|\sum_{k=1}^{\Nf}\sum_{\ell\in I_{j,k}} \piw_{\ell} \delta_{\bthw_{\ell,\bxj}}(A_k) - P^0_{\bxj}(A_k)\right|.
\end{align*}
Since \(G^0_{\bxj}(A_j)= 0\) implies \(P^0_{\bxj}(A_k) = 0\) it suffices to consider the event
\[
\Omega_\theta := \set{\omega\in \Omega_0:\,\, \bthw_{i,\bxj} \in A_k\,\,\mbox{and}\,\, G^0_{\bxj}(A_k),\, k\in\bset{N_f}, j\in \bset{n},\, i\in I_{j,k}}.
\]
By hypothesis, this has positive measure and, by independence, so does \(\Omega_\pi\cap\Omega_\th\). The proof then follows the same arguments as those in the proof of Theorem~\ref{thm:support:product-weak} in Appendix~\ref{thm:support:product-weak:proof} in the case of a \(w\)DDP. We omit the details for brevity. \qed

\subsubsection{Proof of Theorem~\ref{thm:support:compact-strong}}
\label{thm:support:compact-strong:proof}

The proof is similar to the proof of Theorem~\ref{thm:support:compact-weak} in Appendix~\ref{thm:support:compact-weak:proof} with minor modifications. Let \(\Omega_0\subset \Omega\) be a set of full measure such that for any \(\omega\in \Omega_0\) we have: (i) \(\piiw \geq 0\); (ii) \(\sum_{i\in\N}\piiw \equiv 1\); (iii) \(\piiw\) and \(\bthiw\) are continuous on \(\mX\) in the case of a DDP, or \(\piiw\) is continuous on \(\mX\) in the case of a \(\th\)DDP. Furthermore, by possibly reducing \(\Omega_0\) by a null set, we may assume
\[
\bx \mapsto \int f_i(\bth) d\Gxw(\bth) 
\]
is continuous for every \(\omega\in\Omega_0\) and \(i\in\bset{n}\).

We prove the theorem in detail in the case of a DDP. Suppose \(C_S(\mX,\mP(\Th))\cap\mP(\Th)^{\mX}|_{\GX}\) is non-empty, as otherwise there is nothing to prove. Our goal is to show that for \(P^0\in C_S(\mX,\mP(\Th))\cap\mP(\Th)^{\mX}|_{\GX}\) the event 
\[
\Lset{\omega\in\Omega:\,\, \sup_{\bx\in K}\, \left|\int_\Theta f_{i}(\bth) d\Gw_{\bx}(\bth) - \int_\Theta f_{i}(\bth) dP^0_{\bx}(\bth)\right| < \eps_{i},\, i\in \bset{n}}
\]
has positive probability. As argued in the proof of Theorem~\ref{thm:support:compact-weak} in Appendix~\ref{thm:support:compact-weak:proof}, we can assume there exists a partition \(\set{A_{k}}_{k=1}^{\Nf}\) of \(\Th\) of measurable sets such that
\[
f_{i} = \sum_{k = 1}^{N_f} c_{i,k}\ind_{A_k}
\]
where \(|c_{i,k}| \leq 1\) for any \(i\in\bset{n}\) and \(k\in \bset{\Nf}\). Let \(\eps_0 > 0\) be such that \(\eps_0 < \min(\eps_{1},\ldots, \eps_{n})\). Note that
\[
\int_\Theta f_{i}(\bth) d\Gw_{\bx}(\bth) - \int_\Theta f_{i}(\bth) dP^0_{\bx}(\bth) = \sum_{k=1}^{N_f} c_{i,j,k} (\Gw_{\bx}(A_k) - P^0_{\bx}(A_k)). 
\]
By hypothesis, if \(G^0(A_k) = 0\) then \(P^0_{\bx}(A_k) = 0\) for any \(\bx\in K\). Hence, without loss, we can assume \(G^0(A_k) > 0\). Note that in this case, we have
\[
\sum_{k=1}^{\Nf}P^0_{\bx}(A_k) = 1. 
\]
Consider the event 
\[
\Omega_\pi := \bigcap_{k=1}^{\Nf}\bigcap_{j=1}^{n}\Lset{\omega\in \Omega_0:\, \sup_{\bx\in K}\, |\piw_{k,\bx} - P^0_{\bx}(A_k)| < \frac{1}{\Nf^2}\eps}.
\]
Since \(P^0\in C_S(\mX,\mP(\Th))\) we see that \(\bx\mapsto P_{\bx}^0(A_k)\) is continuous whence \(\Omega_\pi\) has positive measure by hypothesis. Remark that, in this case, for any \(j\in \bset{n}\) we have
\[
\sum_{i > \Nf} \piw_{i,\bx} = 1 - \sum_{i=1}^{\Nf}\piw_{i,\bx} = \sum_{i=1}^{\Nf}(P^0_{\bx}(A_k) - \piw_{i,\bxj}) < \frac{1}{\Nf}\eps
\]
where we used the fact that \(\set{A_k}_{k=1}^{\Nf}\) is a partition. Hence,
\[
\left|\sum_{k=1}^{N_f} c_{i,j,k} (d\Gw_{\bxj}(A_k) - P^0_{\bxj}(A_k))\right| < \eps + \sum_{k=1}^{\Nf} \left|\sum_{i=1}^{N_f} \piw_{i,\bxj} \delta_{\bthw_{i,\bxj}}(A_k) - P^0_{\bxj}(A_k)\right|. 
\]
Since \(P^0\in \mP(\Th)^{\mX}|_{\DDP}\), the event
\[
\Omega_{\th} := \set{\omega\in \Omega_0:\,\, \bthw_{k,\bx}\in A_k\,\,\mbox{and}\,\, G^0_{\bx}(A_k) >0,\,\bx\in K,\, k\in\set{1,\ldots,\Nf}}
\]
has positive measure. The proof then proceeds exactly as the  proof of Theorem~\ref{thm:support:compact-weak} in Appendix~\ref{thm:support:compact-weak:proof}. We omit the proof in the case of a \(\th\)DDP for brevity. \qed

\subsection{Association structure of the DDP and its variants}

\subsubsection{Proof of Theorem~\ref{thm:continuity:association}}
\label{thm:continuity:association:proof}

We prove the theorem in the case of a DDP as the arguments are the same in the case of a \(w\)DDP. To prove the theorem we proceed as follows. We first show that for any \(d\in \N\)
\[
(\bx_1,\ldots, \bx_d) \mapsto \ev\left(\prod_{k = 1}^dG_{\bx_k}(B)\right)
\]
is continuous. Then, by leveraging Stone-Weierstrass's theorem~\cite{Reed1980}, we can approximate any continuous \(f\) uniformly over the hypercube by a polynomial. Since the expectation of this polynomial is continuous by our first claim, the theorem follows. 

Define the sequence of functions \(h_n : \mX^d\times \Omega \to \R\) as
\[
h_n(\bx_1,\ldots,\bx_d,\omega) = \prod_{k = 1}^d\sum_{i_k=1}^{n} \piw_{i_k,\bx_{k}}\delta_{\bthw_{i_k,\bx_k}}(B) = \sum_{i_1,\ldots, i_d = 1}^n \prod_{k=1}^{d} \piw_{i_k,\bx_k} \delta_{\bthw_{i_{k},\bx_{k}}}(B).
\]
Since the summands are a.e. non-negative, \(h_n \leq h_{n+1}\). Furthermore, \(h_n\in [0, 1]\) for any \(n\in \N\), and
\[
\lim_{n\to\infty}h_n(\bx_1,\ldots,\bx_d,\omega) = \prod_{k=1}^d \Gw_{\bx_k}(B)
\]
for a.e. \(\omega\in\Omega\) and \(\bx_1,\ldots,\bx_d\in \mX\). 

We first control the expectation for fixed \(\bx_1,\ldots,\bx_d\in \mX\). Write \(h_n^{\bx}(\omega) = h_n(\bx_1,\ldots,\bx_d,\omega)\) for simplicity, and define the function \(g_n:\mX^d\to \R\) as
\begin{multline}
    \label{eq:pf:association:auxA}
    g_n(\bx_1,\ldots,\bx_d) = \ev(h_n^{\bx}) \\
    = \sum_{i_1,\ldots, i_m = 1}^n \ev\left(\prod_{k=1}^{m} \pi_{i_k,\bx_k}\right)\prob\left(\Lset{\omega\in\Omega:\, \bth_{i_1,\bx_1}\in B,\ldots, \bth_{i_d,\bx_d}\in B}\right)
\end{multline}
where in the last equality we used the hypothesis of independence. Since the sequence \(\set{h_n^{\bx}}\) is monotone non-decreasing, by the monotone convergence theorem we have
\[
\lim_{n\to\infty}g_n(\bx_1,\ldots,\bx_d) = g_\infty(\bx_1,\ldots,\bx_d) := \ev\left(\prod_{k = 1}^dG_{\bx_k}(B)\right).
\]
Hence, the sequence \(\set{g_n}\) converges pointwise to \(g_\infty\) over \(\mX^d\). To conclude \(g_\infty\) is continuous, we will show that \(\set{g_n}\) is a sequence of continuous functions, to then use a uniform approximation argument to show the continuity of \(g_\infty\) near any \((\bx_1,\ldots,\bx_d)\in \mX^d\).

To show \(g_n\) is continuous, note that for a.e. \(\omega\) the functions \(\bx\to \piixw\) are continuous for any \(i\in \N\). Therefore, for any \(i_1,\ldots, i_d\in \N\) the function
\[
(\bx_1,\ldots, \bx_d) \to \ev\left(\prod_{k=1}^{d} \pi_{i_k,\bx_k}\right)
\]
is continuous on \(\mX^d\). Similarly, 
\[
(\bx_1,\ldots, \bx_d) \to \prob\left(\Lset{\omega\in\Omega:\, \bth_{i_1,\bx_1}\in B,\ldots, \bth_{i_d,\bx_d}\in B}\right)
\]
is continuous by hypothesis.

We now show continuity of \(g_\infty\) near any \((\bx_1,\ldots, \bx_d)\in \mX^d\) using a uniform approximation. Fix \((\bx_1,\ldots, \bx_d)\in \mX^d\) and let \(\eps > 0\). Remark that for \(n > m\) we have the bound
\[
|h_{n}(\bx_1',\ldots,\bx_d',\omega) - h_{m}(\bx_1',\ldots,\bx_d',\omega)| \leq 
\sum_{k=1}^d \sum_{i_k = m+1}^{n}\piw_{i_k,\bx_k'}
\]
whence
\[
|g_{n}(\bx_1',\ldots,\bx_d') - g_m(\bx_1',\ldots,\bx_d')| \leq \sum_{k=1}^d \sum_{i_k = m+1}^{n}\ev(\pi_{i_k,\bx_k'}).
\]
Hence, we can control the differences on the left-hand side by controlling the expectations on the right-hand side. By similar arguments as those used in the proof of Theorem~\ref{thm:continuity:weak} in Appendix~\ref{thm:continuity:weak:proof}, for any \(\delta < \eps/3\) we can find \(N_\delta\in \N\) such that
\[
\sum_{i_k > N_\delta} \ev(\pi_{i_k,\bx_k}) < \frac{1}{2d}\delta. 
\]
for every \(k\in \set{1,\ldots, d}\). We define the open neighborhood
\[
U_\delta := \bigcap_{k=1}^d\bigcap_{i_k=1}^{N_\eps}\Lset{(\bx'_1,\ldots,\bx_d')\in \mX^d:\,\, |\ev(\pi_{i_k,\bx_k'}) - \ev(\pi_{i_k,\bx_k})| < \frac{1}{2d N_\delta}\delta}
\]
of \((\bx_1,\ldots,\bx_d)\). For \((\bx'_1,\ldots,\bx_d')\in U_{\delta}\) we have
\[
\sum_{i_k > N_\delta} \ev(\pi_{i_k,\bx_k'}) = \sum_{i_k>N_\delta} \ev(\pi_{i_k,\bx_k}) - \sum_{i_k=1}^{N_\delta}(\ev(\pi_{i_k,\bx_k}) -  \ev(\pi_{i_k,\bx_k'})) < \frac{1}{d}\delta.
\]
Therefore, for \(n > m > N_\delta\) we have
\[
\sup_{(\bx'_1,\ldots,\bx_d')\in U_{\delta}}\, |g_{n}(\bx_1',\ldots,\bx_d') - g_m(\bx_1',\ldots,\bx_d')| \leq \sup_{(\bx'_1,\ldots,\bx_d')\in U_{\delta}}\, \sum_{k=1}^d \sum_{i_k > N_\delta}\ev(\pi_{i_k,\bx_k'}) < \delta.
\]
In particular, this implies
\[
\sup_{(\bx'_1,\ldots,\bx_d')\in U_{\eps}}\, |g_{n}(\bx_1',\ldots,\bx_d') - g_{\infty}(\bx_1',\ldots,\bx_d')| < \delta
\]
for any \(n > N_\delta\). Hence, it suffices to choose \(n > N_\eps\) to define the open neighborhood
\[
W_\eps := \Lset{(\bx_1',\ldots,\bx_d')\in U_\delta:\, |g_n(\bx_1',\ldots,\bx_d') - g_n(\bx_1,\ldots,\bx_d)| < \frac{1}{3}\eps}
\]
of \((\bx_1,\ldots,\bx_d)\). Then, for any \((\bx_1',\ldots,\bx_d')\in W_\eps\) we have
\[
|g_{\infty}(\bx_1',\ldots,\bx_d') - g_{\infty}(\bx_1,\ldots,\bx_d)| < 2\delta + \frac{1}{3}\eps < \eps 
\]
whence \(g_\infty\) is continuous.

Now, let \(f:[0, 1]^d \to \R\) be a continuous function. Since \([0, 1]^d\) is compact, \(f\) is bounded, and the function
\[
F(\bx_1,\ldots, \bx_d) = \ev(f(G_{\bx_1}(B),\ldots, G_{\bx_d}(B)))
\]
is well-defined. We now show it is continuous. Let \((\bx_1,\ldots,\bx_d)\in \mX^d\) and fix \(\eps > 0\). Since \([0, 1]^d\) is compact, \(f\) can be approximated uniformly by a polynomial \(p:[0, 1]^d\to \R\). If
\[
p(z_1,\ldots, z_d) = \sum_{|\bgma|=0}^{N} c_{\bgma} \bz^{\bgma}\quad\mbox{and}\quad \sup_{\bz\in [0,1]^d}\, |f(\bz) - p(\bz)| < \frac{1}{3}\eps
\]
then
\[
P(\bx_1,\ldots,\bx_d) := \ev(p(G_{\bx_1}(B),\ldots, G_{\bx_d}(B))) = \sum_{|\bgma|=0}^{N} c_{\bgma} \ev\left(\prod_{k=1}^d \prod_{i_k=1}^{\gamma_k} G_{\bx_k}(B)\right)
\]
is a continuous function by our previous result. Furthermore, if we define the open neighborhood
\[
U_{\eps} = \Lset{(\bx_1',\ldots, \bx'_d):\, |P(\bx_1',\ldots,\bx_d') - P(\bx_1,\ldots,\bx_d)|<\frac{1}{3}\eps}
\]
of \((\bx_1,\ldots, \bx_d)\), then for any \((\bx_1',\ldots,\bx'_d)\in U_\eps\) we have
\begin{multline*}
    |F(\bx_1',\ldots,\bx_d') - F(\bx_1,\ldots,\bx_d)| \\
    \leq \ev(|f(G_{\bx_1'}(B),\ldots, G_{\bx_d'}(B)) - f(G_{\bx_1}(B),\ldots, G_{\bx_d}(B))|) < \eps
\end{multline*}
proving the claim.\qed

\subsection{The regularity of the probability density function of induced mixtures}

\subsubsection{Proof of Lemma~\ref{lem:mixtures:continuousDensityAll}}
\label{lem:mixtures:continuousDensityAll:proof}

We assume that \(P\) is fixed and we drop the superscript on \(\rho^P\). Let \((\byo,\bgmao,\bxo)\in \mY\times\Gma\times \mX\) and \(\eps > 0\). By hypothesis, there exists an open neighborhood \(U_{\byo}\times U_{\bgmao}\) of \((\byo,\bgmao)\) and a compact set \(K_{\btho}\subset \Th\) such that
\[
(\by,\bgma,\bth)\in U_{\byo}\times U_{\bgmao}\times K_{\btho}^{c}:\,\, \psi(\by,\bgma,\bth) < \frac{1}{4}\eps. 
\]
Furthermore, since \(\Th\) is Polish and \(P_{\bxo}\) is finite, there exists \(K_{\bxo}\subset \Th\) compact such that
\[
P_{\bxo}(\Th\setminus K_{\bxo}) < \eps.
\]
Define the compact set \(K_{\Th} := K_{\btho}\cup K_{\bxo}\). For \(\bth\in K_{\Th}\) let \(\delta_{\bth} > 0\) be such that
\[
\max\set{\dY(\by',\byo),\dGma(\bgma',\bgmao),\dTh(\bth',\bth)} < \delta_{\bth}\quad\Rightarrow\quad |\psi(\by',\bgma',\bth') - \psi(\byo,\bgmao,\bth)| < \frac{1}{8}\eps. 
\]
Let \(r_{\bth} > 0\) be such that \(2r_{\bth} < \delta_{\bth}\). Then \(\set{B(\bth,r_{\bth})}_{\bth\in K_{\Th}}\) is an open cover of \(K_{\Th}\). Hence, we can extract a finite subcover \(\set{B(\bthk,r_{\bthk})}_{k=1}^{N}\). Let \(r_0 >0 \) be such that 
\[
r_{0} < \min\set{r_{\bthk}}_{k=1}^N\quad\mbox{and}\quad B(\byo,r_0)\times B(\bgmao,r_0)\subset U_{\byo}\times U_{\bgmao}
\]
and define
\[
U_{\Th} := \bigcup_{k=1}^{N}B(\bthk,r_{\bthk}).
\]
Then there exists a continuous function \(h:\Th\to [0, 1]\) such that \(h \equiv 0\) on \(K_{\Th}\) and \(h\equiv 1\) on \(U_{\Th}^{c}\). Let
\[
\psi^0(\by,\bgma,\bx) = h(\bth)\psi(\by,\bgma,\bth)\quad\mbox{and}\quad \psi^R(\by,\bgma,\bx) = (1-h(\bth))\psi(\by,\bgma,\bth)
\]
and define \(\rho^0\) and \(\rho^R\) similarly. Then, for any
\[
(\by,\bgma,\bth) \in B(\byo,r_0)\times B(\bgmao,r_0) \times U_{\Th}
\]
there exists \(\bthk\in K_{\Th}\) such that
\begin{multline*}
    |\psi^0(\by,\bgma,\bth) - \psi^0(\byo,\bgmao,\bth)| \\
    < |\psi(\by,\bgma,\bth) - \psi(\byo,\bgmao,\bthk)| + |\psi(\byo,\bgmao,\bthk) - \psi(\byo,\bgmao,\bth)| < \frac{1}{4}\eps. 
\end{multline*}
For \((\by,\bgma)\in B(\byo,r_0)\times B(\bgmao,r_0)\) we have
\[
|\rho(\by,\bgma,\bx) - \rho(\byo,\bgmao,\bxo)| < |\rho(\by,\bgma,\bx) - \rho(\byo,\bgmao,\bx)| + |\rho(\byo,\bgmao,\bx) - \rho(\byo,\bgmao,\bxo)|.
\]
The second term can be controlled using the weak continuity of \(P\). In fact, it suffices to consider the open set
\[
U_{\bxo} := \Lset{\bx\in \mX:\,\, |\rho(\byo,\bgmao,\bx) - \rho(\byo,\bgmao,\bxo)| < \frac{1}{4}\eps}
\]
For the first term, consider first
\[
|\rho^0(\by,\bgma,\bx) - \rho^0(\byo,\bgmao,\bx)| < \int_{\overline{U_{\Th}}} (\psi^0(\by,\bgma,\bth) - \psi^0(\byo,\bgmao,\bth)) dP_{\bx}(\bth) < \frac{1}{4}\eps.
\]
For the second, note that
\[
\rho^R(\by,\bgma,\bx) = \int \psi^R(\by,\bgma,\bth) dP_{\bx}(\bth) \leq \int_{K_{\Th}^{c}} \psi(\by,\bgma,\bth) dP_{\bx}(\bth) < \frac{1}{8}\eps. 
\]
Consequently, for \((\by,\bgma,\bx)\in B(\byo,r_0)\times B(\bgmao,r_0) \times U_{\bxo}\) we have
\[
|\rho(\by,\bgma,\bx) - \rho(\byo,\bgmao,\bxo)| < \rho^R(\by,\bgma,\bx) + \rho^R(\byo,\bgmao,\bx) + \frac{1}{4}\eps + \frac{1}{4}\eps < \eps,
\]
proving the claim. \qed

\subsubsection{Proof of Lemma~\ref{lem:mixtures:continuousDensity}}
\label{lem:mixtures:continuousDensity:proof}

Consider the sequence of functions \(\set{q_n}_{n\in\N}\) for \(\qn : \mY\times \Gma \times \mX\times \Omega \to \Rp\) given by
\[
\qnwx(\by) = \sum_{i=1}^n \piixw\psi(\by, \bgma, \bthixw).
\]
Let \(\Omega_0\subset \Omega\) be a set of full measure such that: (i) \(\piiw \geq 0\); (ii) \(\sum_{i\in \N}\piiw \equiv 1\); and (iii) both \(\piiw\) and \(\bthiw\) are continuous on \(\mX\) for any \(i\in\N\). 
If we restrict the functions to \(\mY\times\Gma\times \mX\times \Omega_0\) then \(\qn \geq 0\) and \(\set{\qn}\) is a monotone non-decreasing sequence. Hence,
\[
\qw_{\infty, \bx}(\by) := \lim_{n\to\infty} \qnwx(\by)
\]
is well-defined for a.e. \(\omega\). By the monotone convergence theorem
\[
\int \qw_{\infty, \bx}(\by)d\muY(\by) \equiv 1. 
\]

To prove continuity for a.e. \(\omega\), fix \(\omega\in\Omega_0\). Let \(\byo\in \mY\), \(\bxo\in\mX\) and \(\bgmao\in \Gma\). By hypothesis, there exists an open neighborhood \(U_{\by_0}\) of \(\by_0\) and \(U_{\bgmao}\) of \(\bgmao\) such that
\[
\forall\, (\by,\bgma,\bth) \in U_{\by_0}\times U_{\bgmao}\times \Th:\quad \psi(\by,\bgma,\bth) \leq c_f < \infty
\]
for some \(c_f > 0\). Let \(\eps > 0\) and let \(N_\eps\in \N\) be such that 
\[
\sum_{i > N_\eps} \piixow < \frac{1}{8c_f}\eps.
\]
Define the open set \(U_{\pi} \subset \mX\)
\[
U_\pi := \bigcap_{i=1}^{N_\eps}\Lset{\bx\in\mX:\,\, |\piixw- \piixow| < \frac{1}{8c_f N_\eps}\eps}.
\]
For \(\bx\in U_\pi\) we have
\[
\sum_{i > N_\eps} \piixw= 1 - \sum_{i=1}^{N_\eps} \piixow + \sum_{i=1}^{N_\eps} (\piixw- \piixow) <\frac{1}{8c_f}\eps + \frac{1}{8c_f}\eps < \frac{1}{4c_f}\eps
\]
Now, define the open set \(U_{f,x}\subset \mY\times\Gma\times \mX\) be
\[
U_{f,x} := \bigcap_{i=1}^{N_\eps} \Lset{(\by,\bgma,\bx)\in U_{\by_0}\times U_{\bgmao}\times \mX:\,\, |\psi(\by,\bgma,\bthixw) -\psi(\by_0,\bgmao,\bthixow)| < \frac{1}{4c_f N_\eps}}.
\]

This set is open by continuity of \(\bthiw\). Then, for any \((\by,\bgma,\bx)\in U_{f,x}\) 
\[
\sum_{i > N_\eps}  \piixw\psi(\by, \bgma, \bthixw) < c_f\sum_{i > N_\eps} \piixw< \frac{1}{4}\eps. 
\]
Therefore,
\begin{multline*}
    |\qw_{\infty,\bx,\bgma}(\by) - \qw_{\infty,\bxo,\bgmao}(\by_0)| \\
    \leq \frac{1}{2}\eps + \sum_{i=1}^{N_\eps}|\piixw- \piixow||\psi(\by,\bgma,\bthixw)| + \sum_{i=1}^{N_\eps}\piixow |\psi(\by,\bgma,\bthixw) - \psi(\by_0,\bgmao,\bthixow)|\\
    < \frac{1}{2}\eps + \frac{1}{4}\eps + \frac{1}{4}\eps = \eps,
\end{multline*}
proving the lemma. \qed

\subsection{The continuity of mixtures induced by dependent Dirichlet processes}

\def\cY{c_{\mY}}

\subsubsection{Proof of Theorem~\ref{thm:mixtures:continuity:uniformAll}}
\label{thm:mixtures:continuity:uniformAll:proof}

Fix \(P\in C_W(\mX,\mP(\Th))\). To simplify notation, we drop the superscript on \(M^P\) and \(\rho^P\). Let \(\eps >0\). We will prove that there exist a neighborhood of \((\bgmao,\bxo)\) such that
\[
|Q_{\bgma,\bx}(B) - Q_{\bgmao,\bxo}(B)| < \eps
\]
uniformly on \(B\). The strategy is to find suitable compact subsets of \(\mY\) and \(\Th\) where the measures \(M\) and \(P_{\bxo}\) are concentrated near \((\bgmao,\bxo)\). Then, we leverage the continuity of \(\psi\) and the weak continuity of \(P\).

First, since \(\mY\) is Polish there exists \(K_{\mY}\subset \mY\) compact such that
\[
M_{\bgmao,\bxo}(\mY\setminus K_{\mY})  < \frac{1}{16}\eps. 
\]
Since \(\muY\) is locally finite, there exists \(\cY > 0\) such that
\[
\max\set{1, \muY(K_{\mY})} < \cY < \infty.
\]

Second, since \(\Th\) is Polish, there exists \(K_{\Th}\subset \Th\) compact such that
\[
P_{\bxo}(\Th\setminus K_{\Th}) < \frac{1}{16}\eps.
\]

We now construct a suitable cover for \(K_{\mY}\times\set{\bgmao}\times K_{\Th}\). Let \(\delta_{\by,\bth} > 0\) be such that
\[
\max\set{\dY(\by',\by), \dGma(\bgma',\bgmao), \dTh(\bth',\bth)} < \delta_{\by,\bth}\quad\Rightarrow\quad |\psi(\by',\bgma',\bth') - \psi(\by,\bgmao,\bth)| < \frac{1}{64\cY}\eps.
\]
Similarly, let \(r_{\by,\bth} > 0\) be \(2 r_{\by,\bth} < \delta_{\by,\bth}\). Then, for every \(\by\in K_{\mY}\) the collection
\[
\set{B(\by, r_{\by,\bth})\times  B(\bgmao,r_{\by,\bth})\times B(\bth,r_{\by,\bth})}_{\bth\in K_{\Th}}
\]
is an open cover of the compact set \(\set{\by}\times\set{\bgmao}\times K_{\Th}\). Hence, there exists a finite subcover
\[
\set{B(\by, r_{\by,\bth_\ell})\times  B(\bgmao,r_{\by,\bth_\ell})\times B(\bth_\ell,r_{\by,\bth_\ell})}_{\ell=1}^{N_{\by}}.
\]
Let \(r_{\by} > 0\) be such that
\[
r_{\by} < \min\set{r_{\by,\bth_1},\ldots, r_{\by,\bth_{N_{\by}}}}.
\]
Define the open neighborhood of \(K_{\Th}\)
\[
K_{\Th}^{r_{\by}} := \set{\bth\in \Th:\,\, \dTh(\bth, K_{\Th}) < r_{\by}}
\]
and the open set
\[
W_{\by} := B(\by,r_{\by})\times B(\bgmao,r_{\by})\times K_{\Th}^{r_{\by}}.
\]
Note that
\[
\set{\by}\times\set{\bgmao}\times K_{\Th} \subset W_{\by}.
\]
Furthermore, for any \((\by',\bgma',\bth')\in W_{\by}\) there exists, by construction, \(\bth''\in K_{\Th}\) such that
\[
\dTh(\bth',\bth'') < r_{\by}.
\]
Hence, for some \(\ell\in\set{1,\ldots, N_{\by}}\) we have
\[
\dTh(\bth',\bth_\ell) \leq \dTh(\bth',\bth'') + \dTh(\bth'',\bth_\ell) < r_{\by} + r_{\by,\bth_\ell} < \delta_{\by,\bth_\ell}.
\]
Furthermore, for the same choice of \(\bth_\ell\) we also have \(\dY(\by',\by) < \delta_{\by,\bth_\ell}\) and \(\dGma(\bgma',\bgmao) < r_{\by,\bth_\ell}\). Hence, we deduce that
\begin{multline*}
    |\psi(\by',\bgma',\bth') - \psi(\by,\bgmao,\bth')| \\
    \leq |\psi(\by',\bgma',\bth') - \psi(\by,\bgmao,\bth_\ell)| + |\psi(\by,\bgmao,\bth_{\ell}) - \psi(\by,\bgmao,\bth')|
    < \frac{1}{32\cY}\eps.
\end{multline*}

The collection \(\set{W_{\by}}_{\by\in K_{\mY}}\) is an open cover of the compact set \(K_{\mY}\times\set{\bgmao}\times K_{\Th}\). Hence, we can extract a subcover \(\set{W_{\byk}}_{k=1}^{N_{\mY}}\). By possibly removing elements, we may assume no ball is covered by the union of the remanining ones. Note that \(\set{B(\byk,r_{\byk})}_{k=1}^{N_{\mY}}\) is a cover for \(K_{\mY}\) with the same property. We partition of \(K_{\mY}\) into the sets
\begin{align*}
    A_1 &:= K_{\mY} \cap B(\byk,r_{\byk})\\
    A_k &:= K_{\mY} \cap \left(B(\byk,r_{\byk}) \setminus\bigcup_{\ell
        = 1}^{k-1}B(\by_{\ell},r_{\by_\ell})\right)\qquad k\in\set{2,\ldots, N_{\mY}}.
\end{align*}

Additionally, we let 
\[
U_{\bgmao} := \bigcap_{k=1}^{N_{\mY}} B(\bgmao,r_{\by_k}).
\]

Finally, consider the open set
\[
U_\Th := \set{\bth\in\Th:\,\, \dTh(\bth,K_{\Th}) < \min\set{r_{\by_1},\ldots, r_{\by_{N_{\mY}}}}} \subset \bigcap_{k=1}^{N_{\mY}} K_{\Th}^{r_{\byk}}.
\]
Then, there exists a continuous function \(h:\Th\to [0,1]\) such that \(h|_{K_{\Th}} \equiv 1\) and \(h|_{U_{\Th}^{c}}\equiv 0\). From \(K_{\Th}\) and \(U_\Th\) we obtain the decomposition,
\[
\psi^R(\by,\bgma,\bth):= (1-h(\bth))\psi(\by,\bgma,\bth)\quad\mbox{and}\quad \psi^0(\by,\bgma,\bth) = h(\bth)\psi(\by,\bgma,\bth).
\]
We define \(\rho^R, \rho^0, M^R\) and \(M^0\) similarly. By construction \(\psi^0\) is supported on \(\overline{U_\Th}\) and
\[
P_{\bxo}(K_{\Th}\setminus\overline{U_\Th}) \leq P_{\bxo}(K_{\Th}\setminus K_{\Th}) < \frac{1}{16}\eps. 
\]
Furthermore, for \((\by,\bgma,\bth)\in B(\by_k,r_{\by_k})\times U_{\bgmao}\times K_{\Th}^{r_{\by_k}}\)
\[
|\psi_0(\by,\bgma,\bth) - \psi_0(\by_k,\bgmao,\bth)| \leq h(\bth) |\psi(\by,\bgma,\bth) - \psi(\by_k,\bgmao,\bth)| < \frac{1}{32\cY}\eps.
\]

We now show that
\[
M^0_{\bgma,\bx}(K_{\mY}\cap B) := \int_{K_{\mY}}\int \psi_0(\by,\bgma,\bth) dP_{\bx}(\bth) d\muY(\by)
\]
can be approximated by a continuous function near \((\bgmao,\bxo)\). Consider the decomposition
\begin{multline*}
    M^0_{\bgma,\bx}(B) = \sum_{k=1}^{N_{\mY}} \muY(A_k\cap B)\int \psi_0(\by_k,\bgmao,\bth) dP_{\bx}(\bth) \\
    -\:\sum_{k=1}^{N_{\mY}} \int_{A_k\cap B}\int_{\overline{U_\Th}} (\psi^0(\by,\bgma,\bth) - \psi^0(\by_k,\bgmao,\bth)) dP_{\bx}(\bth)d\muY(\by).
\end{multline*}
Since
\[
\left|\int_{A_k\cap B}\int_{\overline{U_\Th}} (\psi^0(\by,\bgma,\bth) - \psi^0(\by_k,\bgmao,\bth)) dP_{\bx}(\bth)d\muY(\by)\right| < \frac{1}{32\cY} \eps \muY(A_k\cap B)
\]
whence
\[
\left|M^0_{\bgma,\bx}(K_{\mY}\cap B) - \sum_{k=1}^{N_{\mY}} \muY(A_k\cap B)\int \psi^0(\by_k,\bgmao,\bth) dP_{\bx}(\bth)\right| < \frac{1}{32}\eps. 
\]
Therefore
\begin{multline*}
    |M^0_{\bgma,\bx}(K_{\mY}\cap B) - M^0_{\bgmao,\bxo}(K_{\mY}\cap B)| \\
    < \frac{1}{16}\eps + \sum_{k=1}^{N_{\mY}}\muY(A_k\cap B) |\rho^0(\by_k,\bgmao,\bx) - \rho^0(\by_k,\bgmao,\bx_0)|.
\end{multline*}
It suffices to choose 
\[
U_{\bxo} := \bigcap_{k=1}^{N_{\mY}}\Lset{\bx\in \mX:\,\, \left|\int \psi^0(\by_k,\bgmao,\bth) dP_{\bx}(\bth) - \int \psi^0(\by_k,\bgmao,\bth) dP_{\bxo}(\bth)\right| < \frac{1}{16}\eps}
\]
as an open neighborhood for \(\bxo\). Hence, we obtain the bound
\[
|M^0_{\bgma,\bx}(K_{\mY}\cap B) - M^0_{\bgmao,\bxo}(K_{\mY}\cap B)| < \frac{1}{8}\eps
\]
which is uniform over \(B\) for  \((\bgma,\bx) \in U_{\bgmao}\times U_{\bxo}\). 

We now show the remaining terms do not concentrate too much mass. By construction,
\[
M_{\bgmao,\bxo}(K_{\mY}) = M^0_{\bgmao,\bxo}(K_{\mY}) + M^R_{\bgmao,\bxo}(K_{\mY}) \geq 1 - \frac{1}{16}\eps.
\]
Furthermore, by Fubini's theorem,
\begin{align*}
    |M_{\bgmao,\bxo}(K_{\mY}) - M^0_{\bgmao,\bxo}(K_{\mY})| &< \int_{K_{\mY}} \int_{\Th\setminus K_{\Th}} \psi^R(\by,\bgmao,\bth) dP_{\bxo}(\bth)d\muY(\by) \\
    &\leq  \int_{\Th\setminus K_{\Th}} \int_{K_{\mY}}\psi^R(\by,\bgmao,\bth)d\muY(\by) dP_{\bxo}(\bth)\\
    &\leq P_{\bxo}(\Th\setminus K_{\Th})\\
    &< \frac{1}{16}\eps,
\end{align*}
where we used the fact that \(1- h\) is supported on \(\Th\setminus K_{\Th}\). It follows that
\[
M^0_{\bgmao,\bxo}(K_{\mY}) > 1 - \frac{1}{8}\eps\quad\Rightarrow\quad M^R_{\bgmao,\bxo}(K_{\mY}) \leq \frac{1}{8}\eps. 
\]
Note then that
\[
M^0_{\bgma,\bx}(K_{\mY}) > M^0_{\bgmao,\bxo}(K_{\mY}) - \frac{1}{8}\eps > 1 - \frac{1}{4}\eps.   
\]
whence
\[
1 - M^0_{\bgma,\bx}(K_{\mY}) = M^R_{\bgma,\bx}(K_{\mY}) + M_{\bgma,\bx}(\mY\setminus K_{\mY}) < \frac{1}{4}\eps.
\]
Finally, from the decomposition
\begin{multline*}
    M_{\bgma,\bx}(B) = M_{\bgma,\bx}(K_{\mY}\cap B) + M_{\bgma,\bx}(B\setminus K_{\mY}) \\
    = M_{\bgma,\bx}^0(K_{\mY}\cap B) + M_{\bgma,\bx}^R(K_{\mY}\cap B) + M_{\bgma,\bx}(B\setminus K_{\mY})
\end{multline*}
we deduce
\begin{multline*}
    |M_{\bgma,\bx}(B) - M_{\bgmao,\bxo}(B)|  \leq |M_{\bgma,\bx}^0(K_{\mY}\cap B) - M_{\bgmao,\bxo}^0(K_{\mY}\cap B)|\\
    +\: M_{\bgma,\bx}^R(K_{\mY}\cap B) + M_{\bgma,\bx}(B\setminus K_{\mY}) +  M_{\bgmao,\bxo}^R(K_{\mY}\cap B) + M_{\bgmao,\bxo}(B\setminus K_{\mY})\\
    <\frac{1}{8}\eps + \frac{1}{4}\eps + \frac{1}{4}\eps < \eps
\end{multline*}
for \((\bgma,\bx)\in U_{\bgmao}\times U_{\bxo}\) uniformly in \(B\). This proves the theorem. \qed

\subsection{The support of mixtures induced by dependent Dirichlet processes}

To characterize the support of induced mixtures in different topologies, we will use repeatedly the following lemma. It provides uniform control on the behavior of induced mixtures uniformly over weakly continuous \(P:\mX\to \mP(\Th)\).

\begin{lemma}\label{lem:mixtures:uniformContinuityAux}
    Suppose \(\kmix\) is continuous and that for every \(\byo\in \mY\), \(\bgmao\in \mG\) and \(\eps > 0\) there exists \(K_0\subset \Th\) compact such that
    \[
    (\by,\bgma,\bth)\in U_{\byo}\times U_{\bgmao}\times K_0^{c}\quad\Rightarrow\quad \psi(\by,\bgma,\bth) < \eps. 
    \]
    The following assertions are true.
    \begin{enumerate}[leftmargin=24pt]
        \item{Let \(K_{\mY}G \subset\mY\times \mG\) be compact. Then, for every \(\eps > 0\) there exists \(U_{K_{\mY}G}\supset K_{\mY}G\) open, and \(\delta > 0\) such that for any \((\by,\bgma),(\by',\bgma')\in U_{K_{\mY}G}\) we have
            \[
            d_{\mY\times\mG}((\by,\bgma),(\by',\bgma')) < \delta\,\,\Rightarrow\,\, \sup_{\bx\in\mX}\left|\int_{Th} \kmix(\by',\bgma',\bth) dP_{\bx}(\bth) - \int_{Th} \kmix(\by',\bgma',\bth) dP_{\bx}(\bth) \right| < \eps
            \]
            uniformly over \(P:\mX\to \mP(\Th)\) weakly continuous.
        }
        \item{Let \(K_{\Gma\times\mX} \subset \mG\times\mX\) be compact, and let \(P:\mX\to \mP(\Th)\) be weakly continuous. Then, for every \(\eps > 0\) there exists \(K_{\mY}\subset \mY\) compact and \(U_{K_{\Gma\times\mX}}\supset K_{\Gma\times\mX}\) open such that
            \[
            \forall\,(\bgma,\bx) \in U_{K_{\Gma\times\mX}}:\quad Q^P_{\bgma,\bx}(\mY\setminus K_{\mY}) < \eps. 
            \]
        }
    \end{enumerate}
\end{lemma}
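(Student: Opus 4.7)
The plan treats parts (a) and (b) separately; the main difficulty is in (b), where in a general Polish $\mY$ (not assumed locally compact) tightness must be established at the density level rather than at the measure level, via Scheffé's lemma.

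For Part (a), the strategy is to combine the decay hypothesis with compactness to reduce to uniform continuity of $\psi$ on a compact product, and then extend to an open neighborhood via a finite-ball argument and the tube lemma. First, I would apply the decay hypothesis at each $(\byo,\bgmao)\in K_{\mY\times\Gma}$ to obtain open neighborhoods and a compact $K_\Th^{(\byo,\bgmao)}\subset\Th$ on which $\psi<\eps/4$ off the compact; a finite subcover yields a compact $K_\Th$ (union of compacts) and an open $U\supset K_{\mY\times\Gma}$ (union of rectangles) with $\psi<\eps/4$ on $U\times K_\Th^c$. Next, cover the compact $K_{\mY\times\Gma}\times K_\Th$ by open balls $B(p_k,r_k)$ on which $\psi$ oscillates by less than $\eps/8$, extract a finite subcover with half-radii, and let $\delta=\min_k r_k/2$; this gives an open $V\supset K_{\mY\times\Gma}\times K_\Th$ in which any two points within $\delta$ satisfy $|\psi-\psi|<\eps/4$. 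The tube lemma then yields an open $U_{K_{\mY\times\Gma}}\subseteq U$ with $U_{K_{\mY\times\Gma}}\times K_\Th\subseteq V$, and splitting the integral gives
\[
\int_\Th |\psi(\by,\bgma,\bth)-\psi(\by',\bgma',\bth)|\,dP_{\bx}(\bth)\leq \tfrac{\eps}{4}P_{\bx}(K_\Th)+2\cdot\tfrac{\eps}{4}P_{\bx}(K_\Th^c)\leq \tfrac{3\eps}{4},
\]
the first term from uniform continuity and the rest from the decay bound. Uniformity in $\bx$ and in $P$ is automatic, since no constant depends on anything beyond $P_{\bx}$ being a probability measure.

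For Part (b), the plan is (i) to get $\Th$-side tightness from weak continuity of $P$ via Prokhorov, (ii) to apply Scheffé's lemma to produce TV-continuity of $(\bgma,\bth)\mapsto \psi(\cdot,\bgma,\bth)\,d\muY$ as a map into $\mP(\mY)$, and (iii) to combine with the tube lemma and portmanteau. Let $K_\Gma$ and $K_\mX$ be the projections of $K_{\Gma\times\mX}$ (both compact). By Prokhorov, $\{P_{\bx}:\bx\in K_\mX\}$ is tight, so there is a compact $K_\Th^1\subset\Th$ with $\sup_{\bx\in K_\mX}P_{\bx}(\Th\setminus K_\Th^1)<\eps/6$. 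Pointwise convergence of $\psi(\cdot,\bgma_n,\bth_n)$ (by continuity of $\psi$) together with unit total mass places Scheffé's lemma in effect, so $(\bgma,\bth)\mapsto \psi(\cdot,\bgma,\bth)\,d\muY$ is TV-continuous; its image on the compact $K_\Gma\times K_\Th^1$ is TV-compact, hence tight, yielding a compact $K_\mY\subset\mY$ with $H(\bgma,\bth):=\int_{\mY\setminus K_\mY}\psi(\by,\bgma,\bth)\,d\muY(\by)<\eps/6$ on $K_\Gma\times K_\Th^1$. Since $H$ is continuous, the tube lemma produces open $U_\Gma\supset K_\Gma$ and $V_\Th\supset K_\Th^1$ with $H<\eps/3$ on $U_\Gma\times V_\Th$. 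For the $\bx$-side, $\Th\setminus V_\Th$ is closed and has $P_{\bxo}$-mass less than $\eps/6$ for $\bxo\in K_\mX$; portmanteau plus a finite subcover yields an open $N\supset K_\mX$ with $P_{\bx}(\Th\setminus V_\Th)<\eps/3$ on $N$. Then Fubini gives
\[
Q^P_{\bgma,\bx}(\mY\setminus K_\mY)=\int_\Th H(\bgma,\bth)\,dP_{\bx}(\bth)\leq \tfrac{\eps}{3}P_{\bx}(V_\Th)+P_{\bx}(\Th\setminus V_\Th)\leq \tfrac{2\eps}{3}<\eps
\]
for $(\bgma,\bx)\in U_\Gma\times N$, and this open set contains $K_\Gma\times K_\mX\supseteq K_{\Gma\times\mX}$.

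The hard step is in (b): in a general Polish space, enlarging a compact set to an open set with compact closure is not possible, so a direct weak-continuity plus portmanteau argument applied to $(\bgma,\bx)\mapsto Q^P_{\bgma,\bx}$ does not deliver uniform tightness on a neighborhood of $K_{\Gma\times\mX}$. The key move is to shift the tightness argument from the measure level to the density level, where Scheffé's lemma provides TV-continuity of $(\bgma,\bth)\mapsto \psi(\cdot,\bgma,\bth)\,d\muY$ essentially for free from the pointwise continuity and unit total mass of $\psi$; this is stronger than the TV-continuity of $(\bgma,\bx)\mapsto Q^P_{\bgma,\bx}$, which would require the local finiteness of $\muY$ assumed in Theorem~\ref{thm:mixtures:continuity:uniformAll} but not here.
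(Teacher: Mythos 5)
Your proof is correct, but it takes a genuinely different route from the paper's, most visibly in part (b). For part (a) the underlying mechanism is the same --- use the decay hypothesis to confine the discrepancy outside a compact \(K_{\Th}\), then exploit uniform continuity of \(\kmix\) on a compact product --- but the paper first treats a singleton \(\set{(\byo,\bgmao)}\) (introducing a continuous cutoff \(h\) to separate \(K_{\Th}\) from its complement) and then passes to a general compact set via a half-radius covering and a triangle inequality through ball centres, whereas you assemble one global neighbourhood with the tube lemma and split the integral over \(K_{\Th}\) and \(K_{\Th}^{c}\) directly; both work, and yours is slightly leaner since the cutoff is unnecessary. The real divergence is in part (b): the paper picks, for each \((\bgma,\bx)\in K_{\Gma\times\mX}\), a compact \(K_{\bgma,\bx}\subset\mY\) by tightness of \(Q^P_{\bgma,\bx}\) and then enlarges to a neighbourhood using the total-variation continuity of \((\bgma,\bx)\mapsto Q^P_{\bgma,\bx}\) supplied by Theorem~\ref{thm:mixtures:continuity:uniformAll}, finishing with a finite subcover; that theorem assumes \(\muY\) is locally finite, a hypothesis absent from the lemma's statement, so the paper's proof of part (b) silently imports it. Your argument replaces this step by Scheff\'e's lemma applied to the kernel itself: pointwise continuity of \(\kmix\) plus unit mass yields \(\TV\)-continuity of \((\bgma,\bth)\mapsto\kmix(\cdot,\bgma,\bth)\,d\muY\) with no assumption on \(\muY\), Prokhorov turns the resulting \(\TV\)-compact image of \(K_{\Gma}\times K^1_{\Th}\) into a single compact \(K_{\mY}\), and upper semicontinuity of \(\bx\mapsto P_{\bx}(C)\) for closed \(C\) handles the \(\bx\)-direction before Tonelli closes the estimate. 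The payoff is a proof of part (b) that uses strictly fewer hypotheses than the paper's own (it needs neither local finiteness of \(\muY\) nor the decay condition there) and is self-contained; I see no gap in either part.
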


\begin{proof}[Proof of Lemma~\ref{lem:mixtures:uniformContinuityAux}]
    \noindent{\em Proof of 1.} Let \(\eps'\in (0, \eps)\). We first prove the result for \(K_{\mY}G = \set{(\byo,\bgmao)}\). By hypothesis, there exists neighborhoods \(U_{\byo}\) and \(U_{\bgmao}\) of \(\byo\) and \(\bgmao\) respectively, and \(K_{\Th}\subset \Th\) compact such that
    \[
    (\by',\bgma',\bth')\in U_{\byo}\times U_{\bgmao}\times K_{\Th}^{c}:\quad \psi(\by',\bgma',\bth') < \frac{1}{8}\eps'.
    \]
    For \(\btho\in K_{\Th}\) let \(\delta_{\btho} >0\) be such that
    \[
    d_{\mY\times\mG\times\Th}((\by,\bgma,\bth),(\byo,\bgmao,\btho)) < \delta_{\btho}\,\,\Rightarrow\,\, |\kmix(\by,\bgma,\bth) - \kmix(\byo,\bgmao,\btho)| < \frac{1}{8}\eps'.
    \]
    Let \(2r_{\btho} < \delta_{\btho}\). Then \(\set{B(\bth,r_{\bth})}_{\bth\in K_{\Th}}\) is an open cover of \(K_{\Th}\) from which we can extract a finite subcover \(\set{B(\bthk,r_{\bthk})}_{k=1}^{N}\). Define the neighborhoods
    \begin{align*}
        U'_{\byo} &:= U_{\byo}\cap\bigcup_{k=1}^{N} B(\byo,r_0),\\
        U'_{\bgmao} &:= U_{\bgmao}\cap\bigcup_{k=1}^{N} B(\bgmao,r_{\bthk}),\\
        U_{K_{\Th}} &:= \bigcup_{k=1}^{N} B(\bthk,r_{\bthk}),
    \end{align*}
    of \(\byo,\bgmao\) and \(K_{\Th}\) respectively, and let \(h:\Th\to [0,1]\) be a continuous map such that \(h\equiv 1\) on \(K_{\Th}\) and \(h\equiv 0\) on \(U_{K_{\Th}}^{c}\). Then, for any \(\by\in U'_{\byo}\) and \(\bgma\in U'_{\bgmao}\) we have
    \[
    \int_{Th} (1 - h(\bth))\psi(\by,\bgma,\bth)\, dP_{\bx}(\bth) \leq \int_{K_{\Th}^{c}}\psi(\by,\bgma,\bth)\, dP_{\bx}(\bth) < \frac{1}{8} \eps P_{\bx}(K_{\Th}^{c}) < \frac{1}{8}\eps'
    \]
    for any weakly continuous \(P:\mX\to \mP(\Th)\) and \(\bx \in \mX\). Hence, define
    \[
    \kmix_{K_{\Th}}(\by,\bgma,\bth) := h(\bth) \kmix(\by,\bgma,\bth).
    \]
    Then, for any \((\by,\bgma,\bth) \in U'_{\byo}\times U'_{\bgmao}\times U_{K_{\Th}}\) there exists \(\bthk\) such that
    \begin{multline*}
        |\kmix_{K_{\Th}}(\by,\bgma,\bth) - \kmix_{K_{\Th}}(\byo,\bgmao,\bth)| \\
        < |\kmix(\by,\bgma,\bth) - \kmix(\byo,\bgmao,\bthk)| + |\kmix(\byo,\bgmao,\bthk) - \kmix(\byo,\bgmao,\bth)| < \frac{1}{4}\eps'.
    \end{multline*}
    It follows that for any \((\by,\bgma) \in U'_{\byo}\times U'_{\bgmao}\) we have
    \begin{multline*}
        \left|\int_{\Th} \kmix(\by,\bgma,\bth) dP_{\bx}(\bth) - \int_{\Th} \kmix(\byo,\bgmao,\bth) dP_{\bx}(\bth)\right| \\
        < \frac{1}{4}\eps' + \int_{U_{K_{\Th}}} |\kmix_{K_{\Th}}(\by,\bgma,\bth) - \kmix_{K_{\Th}}(\byo,\bgmao,\bth)| dP_{\bx}(\bth) 
        < \frac{1}{2}\eps'
    \end{multline*}
    for any weakly continuous \(P:\mX\to \mP(\Th)\) and \(\bx\in \mX\). From now on, we let \(r_{(\byo,\bgmao)} > 0\) be such that
    \[
    B((\byo,\bgmao), r_{(\byo,\bgmao)}) \subset U'_{\byo}\times U'_{\bgmao}.
    \]
    
    We now consider the case for an arbitrary compact set \(K_{\mY\times\Gma}\). From the open cover \(\set{B((\by,\bgma),r_{(\by,\bgma)}/2)}_{(\by,\bgma)\in K_{\mY}G}\) we can extract a finite subcover \(\set{B((\byk,\bgmak),r_{(\byk,\bgmak)}/2)}_{k=1}^{N}\). Remark the radius of the cover is half of that obtained in the previous step. Define
    \[
    U_{K_{\mY}G} := \bigcup_{k=1}^N B((\byk,\bgmak),r_{(\byk,\bgmak)}/2)
    \]
    and
    \[
    \delta := \frac{1}{4}\min\set{r_{(\byk,\bgmak)}:\, k\in\bset{N}}.
    \]
    Let \((\by',\bgma'),(\by,\bgma)\in U_{K_{\mY}G}\) be such that
    \[
    d_{\mY\times\mG}((\by',\bgma'),(\by,\bgma)) < \delta.
    \]
    Then, there exists \((\byk,\bgmak)\in K_{\mY}G\) such that
    \[
    d_{\mY\times\mG}((\by,\bgma),(\byk,\bgmak)) < \frac{1}{2} r_{(\byk,\bgmak)}
    \]
    This implies that
    \[
    d_{\mY\times\mG}((\by',\bgma'),(\byk,\bgmak)) < d_{\mY\times\mG}((\by',\bgma'),(\by,\bgma)) + \frac{1}{2}r_{(\byk,\bgmak)} <  r_{(\byk,\bgmak)},
    \]
    from where it follows that \((\by',\bgma'),(\by,\bgma)\in B((\byk,\bgmak), r_{\byk,\bgmak})\). In particular,
    \begin{multline*}
        \left|\int_{\Th} \kmix(\by',\bgma',\bth) dP_{\bx}(\bth) - \int_{\Th} \kmix(\by,\bgma,\bth) dP_{\bx}(\bth)\right| \\
        \leq \left|\int_{\Th} \kmix(\by',\bgma',\bth) dP_{\bx}(\bth) - \int_{\Th} \kmix(\byk,\bgmak,\bth) dP_{\bx}(\bth)\right| \\
        +\: \left|\int_{\Th} \kmix(\by,\bgma,\bth) dP_{\bx}(\bth) - \int_{\Th} \kmix(\byk,\bgmak,\bth) dP_{\bx}(\bth)\right| < \frac{1}{2}\eps' + \frac{1}{2}\eps' = \eps'.
    \end{multline*}
    for any \(P:\mX\to \mP(\Th)\) and \(\bx\in \mX\). Since the supremum over \(\mX\) can be at most \(\eps'\) with \(\eps' < \eps\), this proves the lemma.
    
    \noindent{\em Proof of 2.} The hypothesis allow us to conclude from Theorem~\ref{thm:mixtures:continuity:uniformAll} that for any weakly continuous \(P:\mX\to \mP(\Th)\) the map \(Q^P:\mG\times\mX\to \mP(\mY)\) is strongly continuous. Hence, for every \((\bgma,\bx)\in K_{\Gma\times\mX}\) let \(K_{\bgma,\bx}\subset \mY\) be a compact set such that
    \[
    Q^P_{\bgma,\bx}(\mY\setminus K_{\bgma,\bx}) < \frac{1}{2}\eps. 
    \]
    Then, let \(U_{\bgma,\bx}\subset \mG\times \mX\) be an open neighborhood of \((\bgma,\bx)\) such that
    \[
    \forall\, (\bgma',\bx') \in U_{\bgma,\bx}:\quad  |Q^P_{\bgma',\bx'}(\mY\setminus K_{\bgma,\bx}) - Q^P_{\bgma.\bx}(\mY\setminus K_{\bgma,\bx})| < \frac{1}{2}\eps. 
    \]
    Hence, \(\set{U_{\bgma,\bx}}_{(\bgma,\bx)\in K_{\Gma}\times K_{\mX}}\) is an open cover of \(K_{\Gma\times\mX}\) from which we can extract a finite subcover \(\set{U_{\bgmak,\bxk}}_{k=1}^{N}\). Let
    \[
    U_{K_{\Gma\times\mX}} := \bigcup_{k=1}^{N} U_{\bgmak,\bxk}.
    \]
    Then that for every \((\bgma,\bx)\in U_{K_{\Gma\times\mX}}\) there exists \((\bgmak,\bxk)\) such that
    \[
    Q^P_{\bgma,\bx}(\mY\setminus K_{\bgmak,\bxk}) \leq \frac{1}{2}\eps + |Q^P_{\bgma,\bx}(\mY\setminus K_{\bgmak,\bxk}) - Q^P_{\bgmak,\bxk}(\mY\setminus K_{\bgmak,\bxk})| < \eps.
    \]
    Hence, we can choose the compact set
    \[
    K_{\mY} := \bigcup_{k=1}^{N} K_{\bgmak,\bxk},
    \]
    proving the claim.
\end{proof}

As a consequence of the lemma, if the hypotheses of Theorem~\ref{thm:continuity:weak} hold, then for every \(\omega\)  on a set of full measure we have
\[
\left|\int_{\Th} \kmix(\by',\bgma',\bth) d\Gxw(\bth) - \int_{\Th} \kmix(\by',\bgma',\bth) d\Gxw(\bth) \right| < \eps
\]
for any \(\bx\in \mX\). 

The lemma allows us to use essentially the same argument to characterize the support both in the product and compact-open topologies as follows. The statements in the case of the compact-open topology involve the supremum over a compact set \(K_{\Gma}\times K_{\mX}\subset\Gma\times \mX\) and \(\mPo:\mX\to\mP(\Th)\) weakly continuous. The statements in the case of the product topology can be reduced to this as follows. First, in the case of the product topology we need to consider a finite set \(\set{(\bgmai,\bxi):\,  i\in\bset{n}}\). We will see this is equivalent to bounding a supremum over the compact set
\[
K_{\Gma\times\mX} := \set{(\bgmai,\bxi):\, i\in\bset{n}}
\]
or by considering first the compact sets
\[
K_{\Gma} := \set{\bgmai:\, i\in\bset{n}}\quad\mbox{and}\quad K_{\mX} := \set{\bxi:\, i\in\bset{n}}
\]
and defining \(K_{\Gma\times\mX} :=K_{\Gma}\times K_{\mX}\). Second, in the case of the product topology we make no assumptions about the continuity of \(\mPo:\mX\to \mP(\Th)\). However, since only its values on the finite set \(\set{\bxi:\, i\in\bset{n}}\) are relevant, we can leverage Lemma~1 in Part I to replace \(\mPo\) by its weakly continuous interpolant \(\ol{\mPo}\). Once we have performed this reduction, Lemma~\ref{lem:mixtures:uniformContinuityAux} will allow us to prove the desired results using analogous arguments for the product and compact-open topologies.

\subsubsection{Proof of Theorems~\ref{thm:mixtures:support:weakHellinger} and~\ref{thm:mixtures:support:strongHellinger}}
\label{thm:mixtures:support:hellingerWeakStrong:proof}

In the case of the product-Hellinger topology, we define the compact sets \(K_{\Gma} := \set{\bgmai:\, i\in\bset{n}}\) and \(K_{\mX} := \set{\bxi:\, i\in\bset{n}}\). Furthermore, we let \(K_{\Gma\times\mX} =K_{\Gma}\times K_{\mX}\) and we let \(\eps_0 < \min\set{\eps_{1},\ldots,\eps_{n}}\). As indicated before, over the finite set \(K_{\mX}\) we can assume without loss that \(\mPo\) is weakly continuous. For the compact-Hellinger topology, we define \(K_{\Gma\times\mX} =K_{\Gma}\times K_{\mX}\) and let \(\eps_0 < \eps\). 

This reduction allows us to consider the event
\[
\Lset{\omega\in\Omega: \sup_{(\bgma,\bx)\in K_{\Gma\times\mX}} H(\rho^{\Gw}_{\bgma,\bx}, \rho^{\mPo}_{\bgma,\bx}) < \eps_0}.
\]
Both Theorem~\ref{thm:mixtures:support:weakHellinger} and~\ref{thm:mixtures:support:strongHellinger} follow if we show the above event has positive probability. 

Note that
\[
H(\rho^{\Gw}_{\bgma,\bx}, \rho^{\mPo}_{\bgma,\bx}) < \eps_0\,\,\Leftrightarrow\,\, 1 - \int_{\mY} \rho^{\Gw}(\by,\bgma,\bx)^{1/2} \rho^{\mPo}(\by,\bgma,\bx)^{1/2} d\muY(\by) < 2\eps_0^2
\]
Let \(\eps\in (0, 1)\) be such that \(\eps < 2\eps_0^2\). From Lemma~\ref{lem:mixtures:uniformContinuityAux} there exists a compact set \(K_{\mY}\subset \mY\) such that
\[
\sup_{(\bgma,\bx)\in K_{\Gma}\times K_{\mX}} Q^P_{\bgma,\bx}(\mY\setminus K_{\mY}) < \frac{1}{4}\eps.
\]
Since \(\muY\) is locally finite, we can assume without loss that \(\muY(K_{\mY}) < \infty\). Define the compact set \(K_{\mY}G = K_{\mY}\times K_{\Gma}\). By Lemma~\ref{lem:mixtures:continuousDensityAll} there exists \(\delta > 0\) such that for any \((\by',\bgma'),(\by,\bgma)\in K_{\mY\times\Gma}\) we have
\begin{multline*}
    d_{\mY\times\mG}((\by',\bgma'),(\by,\bgma)) < \delta\quad\Rightarrow \\ 
    \left|\int_{\Th} \kmix(\by',\bgma',\bth) dP_{\bx}(\bth) - \int_{\Th} \kmix(\by,\bgma,\bth) dP_{\bx}(\bth) \right| < \frac{1}{16\muY(K_{\mY})}\eps^2
\end{multline*}
uniformly over weakly continuous \(P:\mX\to \mP(\Th)\) and \(\bx\in \mX\). In particular,
\[
\left|\left(\int_{\Th} \kmix(\by',\bgma',\bth) dP_{\bx}(\bth)\right)^{1/2} - \left(\int_{\Th} \kmix(\by,\bgma,\bth) dP_{\bx}(\bth)\right)^{1/2} \right| < \frac{1}{4\muY(K_{\mY})^{1/2}}\eps.
\]

Let \(2r < \delta\). We construct a finite open cover for \(K_{\mY}G\) as follows. First, from the open cover \(\set{B(\by, r)}_{\by \in K_{\mY}}\) we can extract a finite subcover \(\set{B(\byk, r)}_{k=1}^N\). Without loss, we can assume it is minimal, and we can partition \(K_{\mY}\) in terms of the measurable sets of positive measure
\begin{align*}
    A_1 &:= K_{\mY} \cap B(\byk,r)\\
    A_k &:= K_{\mY} \cap \left(B(\byk,r) \setminus\bigcup_{\ell
        = 1}^{k-1}B(\byl ,r)\right)\qquad k\in\set{2,\ldots, N}.
\end{align*}
Second, from the open cover \(\set{B(\bgma, r)}_{\bgma\in K_{\Gma}}\) we can extract a finite subcover \(\set{B(\bgmal, r)}_{\ell=1}^M\). Note that \(\set{B(\byk,r)\times B(\bgmal,r):\, k\in\bset{N},\,\ell\in\bset{M}}\) is an open cover for \(K_{\mY}G\). Hence, for any \((\bgma,\bx)\in K_{\Gma\times\mX}\) we can write
\begin{align*}
    1 - \frac{1}{4}\eps &\leq \int_{K_{\mY}} \qPo(\by,\bgma,\bx)\, d\muY(\by)\\
    & = \sum_{k=1}^{N} \int_{A_k}(\qPo(\by,\bgma,\bx)^{1/2} - \qPo(\byk,\bgmal,\bx)^{1/2}) \qPo(\by,\bgma,\bx)^{1/2}d\muY(\by) \\
    &+\:\sum_{k=1}^{N} \int_{A_k}(\qPo(\byk,\bgmal,\bx)^{1/2} - \qGw(\byk,\bgmal,\bx)^{1/2}) \qPo(\by,\bgma,\bx)^{1/2} d\muY(\by) \\
    &+\:\sum_{k=1}^{N} \int_{A_k} (\qGw(\byk,\bgmal,\bx)^{1/2}-\qGw(\by,\bgma,\bx)^{1/2}) \qPo(\by,\bgma,\bx)^{1/2} d\muY(\by)\\
    &+\:\int_{K_{\mY}} \qGw(\by,\bgma,\bx)^{1/2} \qPo(\by,\bgma,\bx)^{1/2} d\muY(\by)
\end{align*}
where \(\bgma\in B(\bgmal,r)\). The first sum can be bounded as
\begin{multline*}
    \int_{A_k}(\qPo(\by,\bgma,\bx)^{1/2} - \qPo(\byk,\bgmal,\bx)^{1/2}) \qPo(\by,\bgma,\bx)^{1/2}d\muY(\by) \\
    \leq \frac{1}{4\muY(K_{\mY})^{1/2}}\eps \int_{K_{\mY}} \qPo(\by,\bgma,\bx)^{1/2}d\muY(\by) \leq \frac{1}{4\muY(K_{\mY})^{1/2}}\eps\muY(K_{\mY})^{1/2}Q^P(K_{\mY})^{1/2} \leq \frac{1}{4}\eps. 
\end{multline*}
The third sum is a.s. bounded by the same arguments. Hence,
\begin{multline*}
    1 - \int_{\mY} \qGw(\by,\bgma,\bx)^{1/2}\qPo(\by,\bgma,\bx)^{1/2}d\muY(\by) \\
    < \frac{3}{4}\eps + \sum_{k=1}^{N} \int_{A_k}(q^P(\byk,\bgmal,\bx)^{1/2} - \qGw(\byk,\bgmal,\bx)^{1/2}) q^Po(\by,\bgma,\bx)^{1/2} d\muY(\by).
\end{multline*}
To prove the theorems, it remains to bound the integral in the right-hand side. 

The hypotheses of Theorem~\ref{thm:mixtures:support:weakHellinger} allow us to apply Theorem~5 in Part I to show the event
\begin{multline*}
    \left\{\omega\in\Omega: \left|\int_{\Th} \kmix(\byk,\bgmal,\bth)d\Gw_{\bxi}(\bth) - \int_{\Th} \psi(\byk,\bgmai,\bth)dP^0_{\bxi}(\bth)\right| < \frac{\eps}{4\muY(K_{\mY})^{1/2}},\,\right. \\\left. \phantom{\int_{\Th}} k\in\bset{N},\,\ell\in\bset{M},\, i\in\bset{n}\right\},
\end{multline*}
has positive probability. This proves Theorem~\ref{thm:mixtures:support:weakHellinger}.

The hypotheses of Theorem~\ref{thm:mixtures:support:strongHellinger} allow us followsto apply Theorem~7 in Part I to show the event 
\begin{multline*}
    \left\{\omega\in\Omega: \sup_{\bx\in K_{\mX}}\left|\int_{\Th} \psi(\byk,\bgmal,\bth)d\Gw_{\bx}(\bth) - \int_{\Th} \psi(\byk,\bgmal,\bth)d\mPo_{\bx}(\bth)\right| < \frac{\eps}{8\muY(K_{\mY})^{1/2}},\right.\\
    \left. \phantom{\int_{\Th}}\,\,k\in\bset{N},\,\ell\in\bset{M}\right\},
\end{multline*}
has positive probability. This proves Theorem~\ref{thm:mixtures:support:weakHellinger}.\qed

\subsubsection{Proof of Theorems~\ref{thm:mixtures:support:weakLInf} and~\ref{thm:mixtures:support:strongLInf}}
\label{thm:mixtures:support:LInfWeakStrong:proof}

In the case of the product-\(\LInf\) topology we can define the compact set \(K_{\Gma\times\mX} \subset K_{\Gma}\times K_{\mX}\) as in Appendix~\ref{thm:mixtures:support:hellingerWeakStrong:proof} and let \(\eps_0 < \min\set{\eps_{1},\ldots,\eps_{n}}\). As indicated before, over this finite set we can assume without loss that \(\mPo\) is weakly continuous. For the compact-\(\LInf\) topology, we define \(K_{\Gma\times\mX} =K_{\Gma}\times K_{\mX}\) and let \(\eps_0 < \eps\). 

This reduction allows us to consider the event
\[
\Lset{\omega\in\Omega: \sup_{(\bgma,\bx)\in K_{\Gma\times\mX}} \nrmLInf{\qGw_{\bgma,\bx} - q^Po_{\bgma,\bx}} < \eps_0}.
\]
Both Theorem~\ref{thm:mixtures:support:weakLInf} and~\ref{thm:mixtures:support:strongLInf} and follow is we show the above event has positive probability. Let \(\eps\in [0, 1)\) be such that \(\eps < \eps_0\). 

Let \(K_{\mY}G = \mY\times K_{\Gma}\) which, by hypothesis, is compact. From Lemma~\ref{lem:mixtures:uniformContinuityAux} there exists \(\delta > 0\) such that for any \((\by',\bgma'),(\by,\bgma)\in K_{\mY}G\) we have
\[
d_{\mY\times\mG}((\by',\by), (\bgma',\bgma)) < \delta\quad\Rightarrow\quad \left|\int_{\Th} \kmix(\by',\bgma',\bth) dP_{\bx}(\bth) - \int_{\Th} \kmix(\by,\bgma,\bth) dP_{\bx}(\bth)\right| < \frac{1}{4}\eps
\]
uniformly over weakly continuous \(P:\mX\to \mP(\Th)\) and \(\bx\in \mX\). Let \(r >0\) be such that \(2 r < \delta\). From the open cover \(\set{B((\by,\bgma),r)}_{(\by,\bgma)\in K_{\mY}G}\) we can extract a finite subcover \(\set{B((\byk,\bgmak), r)}_{k=1}^{N}\). Hence, for any \((\by,\bgma)\in K_{\mY}G\) there exists \(k\in \bset{N}\) such that
\[
\left|\int_{\Th} \kmix(\by,\bgma,\bth) dP_{\bx}(\bth)\right| < \frac{1}{4}\eps + \left|\int_{\Th} \kmix(\byk,\bgmak,\bth) dP_{\bx}(\bth)\right|
\]
uniformly over \(P:\mX\to \mP(\Th)\) weakly continuous and \(\bx\in \mX\). In particular,
\begin{multline*}
    \left|\int_{\Th} \kmix(\by,\bgma,\bth) d\Gw_{\bx}(\bth) - \int_{\Th} \kmix(\by,\bgma,\bth) dP_{\bx}(\bth)\right|\\
    \leq \frac{1}{2}\eps + \left|\int_{\Th} \kmix(\byk,\bgmak,\bth) d\Gw_{\bx}(\bth) - \int_{\Th} \kmix(\byk,\bgmak,\bth) dP_{\bx}(\bth)\right|.
\end{multline*}

The hypotheses of Theorem~\ref{thm:mixtures:support:weakLInf} and the fact that \(\bgmak \in\set{\bgma_1,\ldots,\bgma_n}\) allows us to apply Theorem~5 in Part I to show that the event
\[
\Lset{\omega\in\Omega: \left|\int_{\Th} \kmix(\byk,\bgmai,\bth)d\Gw_{\bxi}(\bth) - \int_{\Th} \kmix(\byk,\bgmai,\bth)dP^0_{\bxi}(\bth)\right| < \frac{1}{2}\eps,\,\, i\in \bset{n},\,k\in\bset{N}}
\]
has positive probability. This proves Theorem~\ref{thm:mixtures:support:weakLInf}. The hypotheses of Theorem~\ref{thm:mixtures:support:strongLInf} allow us to apply Theorem~7 in Part I to show that the event
\[
\Lset{\omega\in\Omega: \sup_{\bx\in K_{\mX}}\left|\int_{\Th} \kmix(\byk,\bgmak,\bth)d\Gw_{\bx}(\bth) - \int_{\Th} \kmix(\byk,\bgmak,\bth)dP^0_{\bx}(\bth)\right| < \frac{1}{2}\eps,\,\,k\in\bset{N}},
\]
has positive probability. This proves Theorem~\ref{thm:mixtures:support:strongLInf}. \qed

\subsubsection{Proof of Theorems~\ref{thm:mixtures:support:weakKL} and~\ref{thm:mixtures:support:strongKL}}
\label{thm:mixtures:support:KLWeakStrong:proof}

In the case of the product-KL topology we proceed as for the product-\(\LInf\) topology. We define the compact sets \(K_{\Gma} := \set{\bgmai:\, i\in\bset{n}}\) and \(K_{\mX} := \set{\bxi:\, i\in\bset{n}}\). Furthermore, we let \(K_{\Gma\times\mX} =K_{\Gma}\times K_{\mX}\) and we let \(\eps_0 < \min\set{\eps_{1},\ldots,\eps_{n}}\). As indicated before, over this finite set we can assume without loss that \(\mPo\) is weakly continuous. For the compact-KL topology, we define \(K_{\Gma\times\mX} =K_{\Gma}\times K_{\mX}\) and let \(\eps_0 < \eps\). 

The hypothesis \(\kmix > 0\) implies that \(q^{\mPo} > 0\). Furthermore, since \(\mY\times K_{\Gma}\times K_{\mX}\) is compact, there exists \(c_{\max} > 0\) such that \(q^Po \leq c_{\max}\). Let \(\eps' > 0\) be such that \(\eps' < \eps_0 / (1 + \eps_0)\) and consider the event
\[
\Lset{\omega\in \Omega:\,\, \sup_{(\by,\bgma,\bx)\in K_{\mY}\times K_{\Gma}\times K_{\mX}}\,|\qGw(\by,\bgma,\bx) - q^Po(\by,\bgma,\bx)| < \eps'c_{\max}}.
\]
The hypotheses allows us to apply Theorem~\ref{thm:mixtures:support:weakLInf} or Theorem~\ref{thm:mixtures:support:strongLInf} respectively to prove this event has positive probability. Furthermore, on this event we have
\[
\left|\frac{\qGw(\by,\bgma,\bx)}{q^Po(\by,\bgma,\bx)} - 1\right| < \eps'.
\]
Since for \(t > -1\) we have
\[
\frac{t}{1 + t} \leq \log(1 + t) \leq t,
\]
we deduce that
\[
\left|\log\left(\frac{q^Po(\by,\bgma,\bx)}{\qGw(\by,\bgma,\bx)}\right)\right| < \frac{\eps'}{1 - \eps'} < \eps_0. 
\]
In particular,
\[
\KL{q^{\mPo}}{\qGw} = \int_{\mY} q^Po(\by,\bgma,\bx)\log\left(\frac{q^{\mPo}(\by,\bgma\bx)}{\qGw(\by,\bgma,\bx)}\right)d\muY(\by) < \eps_0.
\]
Consequently, the event
\[
\Lset{\omega\in \Omega:\,\, \sup_{(\bgma,\bx)\in K_{\Gma}\times K_{\mX}}\,\KL{q^Po_{\bgma,\bx}}{\qGw_{\bgma,\bx}} < \eps_0}
\]
has positive probability, proving the theorem.\qed

\bibliographystyle{unsrtnat}
\bibliography{references}  






\end{document}